\documentclass[12pt,twoside,leqno]{article}
\usepackage{amsmath,amsthm,amssymb}
\numberwithin{equation}{section}
\usepackage[dvips]{graphicx,color,psfrag}


\makeatletter
\newcommand{\figcaption}[1]{\def\@captype{figure}\caption{#1}}
\newcommand{\tblcaption}[1]{\def\@captype{table}\caption{#1}}
\makeatother









\NeedsTeXFormat{LaTeX2e}[]
\ProvidesPackage{yoshinori}[2004/12/03 by yoshinori Yamasaki]

\makeatletter


\ifcase\@ptsize
\def\rpkern{\mathchoice{\kern-1.45em}{\kern-1.11em}{}{}}%
\def\grpkern{\mathchoice{\kern-1.013em}{\kern-0.825em}{}{}}%
\or
\def\rpkern{\mathchoice{\kern-1.44em}{\kern-1.11em}{}{}}%
\def\grpkern{\mathchoice{\kern-1.00em}{\kern-0.81em}{}{}}%
\or
\def\rpkern{\mathchoice{\kern-1.472em}{\kern-1.14em}{}{}}%
\def\grpkern{\mathchoice{\kern-1.00em}{\kern-0.815em}{}{}}%
\fi

\def\minibullet{\mathchoice%
{\raise0.2ex\hbox{$\scriptstyle\bullet$}}%
{\raise0.26ex\hbox{$\scriptscriptstyle\bullet$}}{}{}}
\def\butabullet{\mathchoice%
{\raise0.8ex\hbox{$\scriptstyle\bullet$}{\kern-0.365em}%
\lower0.4ex\hbox{$\scriptstyle\bullet$}}%
{\raise0.75ex\hbox{$\scriptscriptstyle\bullet$}{\kern-0.335em}%
\lower0.25ex\hbox{$\scriptscriptstyle\bullet$}}{}{}}

\def\customprod#1#2%
{\operatorname{\coprod\kern#2{#1}\kern#2\prod}\displaylimits}












\renewcommand{\Re}{\mathrm{Re}\,}
\renewcommand{\Im}{\mathrm{Im}\,}













\theoremstyle{theorem}
\newtheorem*{multitheorem}{\variable@name}

\theoremstyle{definition}
\newcommand{\variable@name}{Theorem}
\newtheorem*{multiproclaim}{\variable@name}

\theoremstyle{plain}
\newtheorem{thm}{Theorem}[section]
\newtheorem{prop}[thm]{Proposition}
\newtheorem{lem}[thm]{Lemma}
\newtheorem{cor}[thm]{Corollary}

\theoremstyle{definition}
\newtheorem{dfn}[thm]{Definition}
\newtheorem{chu}[thm]{Remark}
\newtheorem{exa}[thm]{Example}


\textwidth=16.5 cm
\textheight=22 cm
\oddsidemargin= -2.5mm
\evensidemargin=-2.5mm

\topmargin=0.0 cm

\pagestyle{myheadings}
\markboth{Genki Shibukawa}
{Bilateral zeta  functions and their applications}

\title{Bilateral zeta functions and their applications}
\author{GENKI SHIBUKAWA}
\date{\empty}
\begin{document}
\maketitle

\begin{abstract}
We introduce a new type of multiple zeta functions, which we call bilateral zeta functions.  
We prove that the bilateral zeta function has a nice Fourier series expansion and the Barnes zeta function can be expressed as a finite sum of bilateral zeta functions. 
By these properties of the bilateral zeta functions, we obtain simple proofs of some formulas, for example, the reflection formula for the multiple gamma function, the inversion formula for the Dedekind $\eta$-function, Ramanujan's formula, Fourier expansion of the Barnes zeta function and multiple Iseki's formula. 
\end{abstract}
\section{Introduction}
In the present paper, we introduce a new type of multiple zeta functions(series) analogous to 
the ones Barnes originally defined in 1904 [Ba], and investigate their fundamental properties.
The Barnes zeta functions have been used for the proofs of the basic facts like 
the transformation formula for theta functions and
the reflection formula for multiple gamma functions.
Those known proofs are, however,  not clear enough, mainly because good explicit expressions
of the Barnes zeta functions have not been established.  A closer look at this situation tells us that  those proofs seem to have
paid extra efforts for pursuing  the parallelism to the most primitive case.
To these difficulties, we will demonstrate how our new type of multiple zeta functions, which we call {\it bilateral}  zeta functions,
are useful for the clarification of such zeta-based  understanding of the special functions.

Our bilateral zeta function is defined as a periodic function 
which shares certain basic properties of the Barnes zeta function. 
Actually, the bilateral zeta function is defined by a sum of two Barnes multiple zeta functions as follows(see Section 4): 
For $0<\arg(\omega_0)\leq \pi $, 
\begin{equation}
\xi_{r+1}(s, z\mid \omega_0;\,{\boldsymbol{\omega}})=\zeta_{r+1}(s, z+\omega_{0}\mid \omega_{0},\,{\boldsymbol{\omega}} )
+\zeta_{r+1}(s, z\mid -\omega_{0},\,{\boldsymbol{\omega}}). \nonumber
\end{equation}
Here $\zeta_{r+1}(s, z\mid \omega_{0},\,{\boldsymbol{\omega}} )\, ({\boldsymbol{\omega}}:=\,(\omega_{1},\cdots,\omega_{r}) \in \mathbf{C}^r)$ denotes the Barnes multiple zeta function and is given by
\begin{equation}
\zeta_{r+1}(s,\,z\mid \omega_{0},\omega_{1},\cdots,\omega_{r} )
:=\sum_{m_0,m_1,\cdots,m_{r}=0}^{\infty }
 \frac{1}{(z+m_{0}\omega_{0}+m_{1}\omega_{1}+\cdots+m_{r}\omega_{r})^s}.\nonumber
\end{equation} 
For the absolute convergence of these series, certain conditions on the positions of the parameters 
and the variable $z$ should be assumed (see Section 3 and 4 for the precise conditions). 
Under those conditions, the domains of absolute convergence in $s$ for 
$\xi_{r+1}$ and  $\zeta_{r+1}$ are $\Re(s)> r+1$.

The bilateral zeta function inherits many of similar properties of the Barnes zeta function.
Actually, it can be shown that $\xi_{r}$ is continued holomorphically to the whole $s$-plane.   
Moreover, the bilateral zeta function thus defined has obviously 
the Fourier series expansion which is very nice. 
Therefore, we easily find that special functions such as multiple $q$-shifted 
factorials are nicely written by the bilateral zeta function. 
These relationships enable us to derive many important properties of the special functions on the basis of the bilateral zeta functions. 
Moreover, what is remarkable is that one can show the Barnes zeta function itself turns to be expressed by a finite sum of bilateral zeta functions. 
In this way, the Fourier expansion of the Barnes zeta functions and multiple Bernoulli polynomials are transparently derived.

The first main result 
is the following explicit Fourier series expansion(see {Theorem \ref{thm:multiple basic formula}.}):
\begin{equation}
\xi_{r+1}(s,z\mid e^{\pi i};\,{\boldsymbol{\omega}})
=\frac{e^{-\frac{\pi}{2}is}(2{\pi})^s}{\Gamma(s)}
\sum _{n=1}^{\infty }
\frac{n^{s-1}e^{2{\pi}inz}}
{(1-e^{2{\pi}in\omega_{1}}) \cdots(1-e^{2{\pi}in\omega_{r}})},\nonumber
\end{equation}
for all $z,\omega_{1},\cdots,\omega_{r} \in \mathfrak{H}$ and $s \in \mathbf{C}$.
Here $\mathfrak{H}$ is the upper half of the complex plane.

From this Fourier series expansion, small manipulation yields the reflection formula for the multiple gamma function(see corollary \ref{cor:Friedman, Ruijsenaars}) as 
\begin{align}
{} &
\frac{1}{\Gamma_{r+1}(z\mid 1,\,{\boldsymbol{\omega}})\Gamma_{r+1}(1-z\mid 1,\,e^{-{\pi}i}{\boldsymbol{\omega}})}
=\exp\left\{\frac{(-1)^{r+1}{\pi}i}{(r+1)!}B_{r+1,r+1}(z\mid 1,\,{\boldsymbol{\omega}})\right\}(x;\,{\boldsymbol{q}})_{r,\infty }.\nonumber
\end{align}

We notice that the formula above has been obtained by Friedman and Ruijsenaars \cite{FR} 
from Raabe's formula for the integral expression of the Barnes zeta function.
The Fourier series expansion of $\xi_{2}$ gives also simple proofs of 
the inversion formula for the Dedekind $\eta$-function and 
Ramanujan's classical formula concerning the special values of the Riemann zeta function(see Proposition \ref{prop:eta&Ramanujan}).

The second main result is an expression of the Barnes zeta function by a finite sum of bilateral zeta functions in $s(\in \mathbf{C})$ as follows.
\begin{align}
\zeta_{r}(s,z\mid {\boldsymbol{\omega}})
&=\frac{1}{2i\sin({\pi}s)}\nonumber \\
{} & \quad \cdot\! \left\{
\sum _{k=1}^{r}(-1)^{k-1}\xi_{r}(s,\vert {\boldsymbol{\omega}}\vert^{\,+}_{[1,k-1]}+e^{-{\pi}i}z\mid \omega_{k};\widehat{{\boldsymbol{\omega}}}^{-}[k,r](k))\right. \nonumber \\
{} & \quad \left.-\sum _{k=1}^{r}(-1)^{r-k}e^{-{\pi}is}\xi_{r}(s,z+e^{-{\pi}i}\vert {\boldsymbol{\omega}}\vert^{\,+}_{[k+1,r]}\mid \omega_{k};\widehat{{\boldsymbol{\omega}}}^{-}[k,r](k))\right\} \nonumber
\end{align}
for $\omega_{1},\cdots,\omega_{r} \in \mathfrak{H}$ and  
$z\in D:=\{z\in \mathbf{C}^{\ast}\,\vert \,z=\sum _{k=1}^{r}a_{k}\omega_{k}\,\,(0< a_{1},\cdots,a_{r}< 1)\}$.
Here we assume that $\arg(\omega_{j})<\arg(\omega_{k})$($1\leq j<k\leq r$)
(see Section 2 for the definitions of notations).\\

We remark that the proof of this theorem gives a multiple-analogue of the result in Knopp and Robins \cite{KR}.
Further, we obtain also a multiple-analogue of Iseki's formula, a generalization of transformation formula for the theta function \cite{I}. 
This gives also a generalization of infinite product expressions of the multiple sine functions by Narukawa \cite{Na}.
The original proofs of Iseki and that of Narukawa are based on the residues theorem 
while our proof which follows from Kronecker-type limit formula (\ref{eq:f_{+} D1}) for the bilateral zeta function is much simpler.\\
\indent
Throughout the paper, we denote by $\mathbf{N}$ be the set of natural numbers, $\mathbf{Z}$ the ring of rational integers, $\mathbf{Q}$ the field of rational
numbers, $\mathbf{R}$ the field of real numbers, $\mathbf{C}$ the field of complex numbers, 
and put $\mathbf{N}_{0}:=\mathbf{N}\cup \{0\}$,\,$\mathbf{C}^{\ast}:=\mathbf{C}\backslash \{0\}$.

\section{Notations and definitions}
For $c\in \mathbf{C}$, we always assume that
\begin{equation}
-\pi< \arg{c} \leq\pi. \nonumber
\end{equation}
We set $\arg{0}:=0$. 
We note that $i:=\sqrt{-1}\in \mathfrak{H}=\{z\in \mathbf{C}^{\ast}\mid 0<\arg(z)<\pi\}$. \\
\indent
For any vector ${\boldsymbol{X}}=(X_{1},\cdots,X_{r})\in \mathbf{C}^{r}$, we put 
\begin{align}
c{\boldsymbol{X}}:=&\,(cX_{1},\cdots,cX_{r})\in \mathbf{C}^{r}\,(c\in \mathbf{C}), \\
\widehat{{\boldsymbol{X}}}(j):=&\,(X_{1},\cdots,X_{j-1},X_{j+1},\cdots,X_{r})\in \mathbf{C}^{r-1} \\ 
=&\,(X_{1},\cdots,\widehat{X}_{j},\cdots,X_{r}), \\ \nonumber
{\boldsymbol{X}}^{-}[j]:=&\,(X_{1},\cdots,-X_{j},\cdots,X_{r})\in \mathbf{C}^{r}, \\
{\boldsymbol{X}}^{-1}[j]:=&\,(X_{1},\cdots,{X}_{j}^{-1},\cdots,X_{r})\in \mathbf{C}^{r}, \\
{\boldsymbol{X}}^{-1}:=&\,(X_{1}^{-1},\cdots,X_{r}^{-1})\in \mathbf{C}^{r}, \\
\vert {\boldsymbol{X}}\vert^{+}:=&\,X_{1}+\cdots+X_{r}\in \mathbf{C}, \\
\vert{\boldsymbol{X}}\vert^{\times}:=&\,X_{1}\cdots X_{r}\in \mathbf{C}.
\end{align}
When $m,n,j\in \mathbf{N}$ and $1\leq m\leq n\leq r$, we define
{\allowdisplaybreaks
\begin{align}
{\boldsymbol{X}}^{-}[m,n]:=&\,(X_{1},\cdots,{X}_{m-1},-{X}_{m},\cdots,-{X}_{n},{X}_{n+1},\cdots,X_{r})\in \mathbf{C}^{r}, \\
{\boldsymbol{X}}^{-1}[m,n]:=&\,(X_{1},\cdots,X_{m-1},X_{m}^{-1},\cdots,X_{n}^{-1},X_{n+1},\cdots,X_{r})\in \mathbf{C}^{r}, \\
\vert {\boldsymbol{X}}\vert^{+}_{[m,n]}:=&\,\sum _{k=m}^{n}X_{k}\in \mathbf{C}, \\
\vert {\boldsymbol{X}}\vert^{\times}_{[m,n]}:=&\,\prod _{k=m}^{n}X_{k}\in \mathbf{C}. \\
{\boldsymbol{X}}^{-}[r+1,r]:=&\boldsymbol{X}\in \mathbf{C}^{r}, \\
{\boldsymbol{X}}^{-1}[r+1,r]:=&\boldsymbol{X}\in \mathbf{C}^{r}, \\
\vert {\boldsymbol{X}}\vert^{+}_{[1,0]}:=&0, \\
\vert {\boldsymbol{X}}\vert^{+}_{[r+1,r]}:=&0, \\
\vert {\boldsymbol{X}}\vert^{\times}_{[r+1,r]}:=&1.
\end{align}
}

Let $z,\omega_{1},\cdots,\omega_{r}\in \mathbf{C}$ and $\,x:=e^{2\pi{i}z}$. 
Put
\begin{align}
{\boldsymbol{\omega}}:=&\,(\omega_{1},\cdots,\omega_{r})\in \mathbf{C}^{r}, \nonumber \\
{\boldsymbol{q}}:=&\,(q_{1},\cdots,q_{r})\in \mathbf{C}^{r},\nonumber
\end{align}
where $q_{k}:=e^{2\pi{i}\omega_{k}}(k=1,\cdots,r)$.
\begin{dfn}
\label{def:Def1.1}
(1)
For $\omega_{k}\in \mathfrak{H}$ $(k=1,\cdots,r)$, we define a $r$-ple $q$-shifted factorial $\,(x;\,{\boldsymbol{q}})_{r,\infty }$ by
\begin{align}
(x;\,{\boldsymbol{q}})_{r,\infty } &
:=\!\!\!\!\!\!\! \prod 
_{m_1,\cdots,m_{r}=0}^{\infty }
\!\!\!\!\!\!\! {(1-e^{2{\pi}i(m_{1}\omega_{1}+\cdots+m_{r}\omega_{r}+z)})}
=\!\!\!\!\!\!\! \prod 
_{m_1,\cdots,m_{r}=0}^{\infty }
\!\!\!\!\!\!\! {(1-q_{1}^{m_{1}}\cdots q_{r}^{m_{r}}x)}.
\end{align}
If $r=0$, we put 
\begin{equation}
(x)_{0,\infty}:=1-e^{2{\pi}iz}=1-x.
\end{equation}
(2)
For $\Im(\omega_{1}),\cdots,\Im(\omega_{l})<0$ and $\Im(\omega_{l+1}),\cdots,\Im(\omega_{r})>0$,
that is, 
$\vert{q}_{1}\vert,\cdots,\vert{q}_{l}\vert>1$ and $\vert{q}_{l+1}\vert,\cdots,\vert{q}_{r}\vert<1$,
we define the generalized $q$-shifted factorial $\,(\widetilde{x;\,{\boldsymbol{q}}})_{r,\infty }$ by
\begin{align}
(\widetilde{x;\,{\boldsymbol{q}}})_{r,\infty } &
:={(\vert {\boldsymbol{q}}^{-1}\vert^{\times}_{[1,l]}x;\,{\boldsymbol{q}}^{-1}[1,l])^{(-1)^l}_{r,\infty }} \\
{} & \,=\!\!\!\!\!\!\!\!\prod
_{m_1,\cdots,m_{r}=0}^{\infty }
\!\!\!\!\!\!\!\!{(1-q_{1}^{-(m_{1}+1)}\cdots q_{l}^{-(m_{l}+1)}q_{l+1}^{m_{l+1}}\cdots q_{r}^{m_{r}}x)^{(-1)^l}}. \nonumber
\end{align}
\end{dfn}

Put $z_{k}:=z/\omega_{k},\,x_{k}:=e^{2\pi{i}z_{k}},\,\omega_{jk}:=\omega_{j}/\omega_{k}$ and $q_{jk}:=e^{2\pi{i}\omega_{jk}}$.
Define
\begin{align}
{\boldsymbol{\omega}}_{\,k}:=&(\omega_{1k},\cdots,{\omega_{kk}},\cdots,\omega_{rk}),\\
{\boldsymbol{q}}_{\,k}:=&(q_{1k},\cdots,{q_{kk}},\cdots,q_{rk}),\\
\widehat{{\boldsymbol{\omega}}}_{\,k}:=&\widehat{{\boldsymbol{\omega}}_{\,k}}(k)=(\omega_{1k},\cdots,\widehat{\omega}_{kk},\cdots,\omega_{rk}),\\
\widehat{{\boldsymbol{q}}}_{\,k}:=&\widehat{{\boldsymbol{q}}_{\,k}}(k)=(q_{1k},\cdots,\widehat{q}_{kk},\cdots,q_{rk}).
\end{align}
The following lemma is obvious.
\begin{lem}
Assume that $\omega_{1},\cdots,\omega_{r}(\in \mathfrak{H})$
satisfy the order condition {\bf[{ORC}]} which refers as 
\begin{align}
\arg(\omega_{j})<\arg(\omega_{k})\,\,(j<k).\nonumber
\end{align}
Then, for a fixed $k$, one has 
\begin{align}
\label{eq:general q-fact+}
(\widetilde{x_{k};\,\widehat{{\boldsymbol{q}}}}_{\,k})_{r-1,\infty }
{} & \,={(\vert {\boldsymbol{q}}^{\,-1}_{\,k}\vert^{\times}_{[1,k-1]}x_{k};\,\widehat{{\boldsymbol{q}}}^{\,-1}_{\,k}[1,k-1])^{(-1)^{k-1}}_{r-1,\infty }},\\
\label{eq:general q-fact-}
(\widetilde{x_{k}^{-1};\,\widehat{{\boldsymbol{q}}}^{\,-1}_{\,k}})_{r-1,\infty }
{} & \,={(\vert {\boldsymbol{q}}_{\,k}\vert^{\times}_{[k+1,r]}x_{k}^{-1};\,\widehat{{\boldsymbol{q}}}^{\,-1}_{\,k}[1,k-1])^{(-1)^{r-k}}_{r-1,\infty }}.
\end{align}
\end{lem}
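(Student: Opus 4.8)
The plan is to reduce both identities directly to Definition~\ref{def:Def1.1}(2) of the generalized $q$-shifted factorial, the only substantive point being to read off from the order condition \textbf{[ORC]} which of the base variables $q_{jk}$ have modulus larger than $1$.

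First I would record the effect of \textbf{[ORC]} on the ratios $\omega_{jk}=\omega_{j}/\omega_{k}$. Since $\omega_{j},\omega_{k}\in\mathfrak{H}$ their arguments lie in $(0,\pi)$, so $\arg(\omega_{jk})=\arg(\omega_{j})-\arg(\omega_{k})\in(-\pi,\pi)$ is already on the principal branch. Hence \textbf{[ORC]} yields $\Im(\omega_{jk})<0$, i.e.\ $\vert q_{jk}\vert>1$, exactly when $j<k$, and $\Im(\omega_{jk})>0$, i.e.\ $\vert q_{jk}\vert<1$, exactly when $j>k$. Therefore in $\widehat{{\boldsymbol{q}}}_{\,k}=(q_{1k},\cdots,q_{(k-1)k},q_{(k+1)k},\cdots,q_{rk})$ the first $k-1$ entries have modulus $>1$ and the last $r-k$ have modulus $<1$, whereas in the inverted vector $\widehat{{\boldsymbol{q}}}^{\,-1}_{\,k}$ precisely the reverse ordering holds.

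For \eqref{eq:general q-fact+} the vector $\widehat{{\boldsymbol{q}}}_{\,k}$ already presents its modulus-$>1$ entries first, so it satisfies the positional hypothesis of Definition~\ref{def:Def1.1}(2) with $l=k-1$. Substituting into that definition, the prefactor becomes $\prod_{j=1}^{k-1}q_{jk}^{-1}=\vert{\boldsymbol{q}}^{\,-1}_{\,k}\vert^{\times}_{[1,k-1]}$ (since $k\notin[1,k-1]$ it is immaterial here whether one writes ${\boldsymbol{q}}_{\,k}$ or $\widehat{{\boldsymbol{q}}}_{\,k}$), the base becomes $\widehat{{\boldsymbol{q}}}^{\,-1}_{\,k}[1,k-1]$, and the exponent is $(-1)^{k-1}$; this is exactly the right-hand side.

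For \eqref{eq:general q-fact-} the one genuine point is that $\widehat{{\boldsymbol{q}}}^{\,-1}_{\,k}$ lists its modulus-$>1$ entries last rather than first, so Definition~\ref{def:Def1.1}(2) does not apply verbatim. I would dispose of this by observing that the defining product $\prod_{m_{1},\cdots,m_{r-1}=0}^{\infty}(1-q_{1}^{m_{1}}\cdots q_{r-1}^{m_{r-1}}x)$ is invariant under any simultaneous permutation of the base variables and their summation indices, so the generalized factorial is symmetric in its base entries; one may therefore move the $r-k$ entries of modulus $>1$ to the front, apply the definition with $l=r-k$, and permute back. This produces the prefactor $\prod_{j=k+1}^{r}q_{jk}=\vert{\boldsymbol{q}}_{\,k}\vert^{\times}_{[k+1,r]}$, the base $\widehat{{\boldsymbol{q}}}^{\,-1}_{\,k}[1,k-1]$, and the exponent $(-1)^{r-k}$, which is the claimed right-hand side. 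I expect this symmetry remark to be the only step requiring a word of comment; everything else is routine bookkeeping of indices and signs, which is why the statement may fairly be called obvious.
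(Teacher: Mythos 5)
Your proposal is correct, and it is essentially the argument the paper intends: the paper states the lemma without proof (``The following lemma is obvious''), and your unwinding of Definition~1.1(2) --- translating \textbf{[ORC]} into $\vert q_{jk}\vert>1$ for $j<k$ and $\vert q_{jk}\vert<1$ for $j>k$, then applying the definition with $l=k-1$ (respectively, after the symmetry observation, $l=r-k$) --- is exactly the routine verification being omitted. Your remark that the second identity needs the symmetry of the underlying product in the base variables, since $\widehat{{\boldsymbol{q}}}^{\,-1}_{\,k}$ lists its modulus-$>1$ entries last, is a worthwhile point the paper leaves implicit.
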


\section{Barnes zeta functions}
In this section, we recall various properties of the Barnes zeta function from Barnes \cite{Ba}. 
Let $r\in \mathbf{N}_{0}$. 
For $\Re(s)>r+1$, we define ($r+1$)-ple Barnes zeta functions $\zeta_{r+1}(s,\,z\mid \omega_0,\,{\boldsymbol{\omega}} )$ by the series
\begin{equation}
\zeta_{r+1}(s,\,z\mid \omega_0,\,{\boldsymbol{\omega}} )
:=\sum_{m_0,\cdots,m_{r}=0}^{\infty }
 \frac{1}{(z+m_{0}\omega_{0}+m_{1}\omega_{1}+\cdots+m_{r}\omega_{r})^s}.
\end{equation} 
Here $z,\omega_0$ and $\boldsymbol{\omega}$ satisfy the following one-side condition {\bf[{OC}]}. 
\begin{equation}
\max\{\arg(z),\arg(\omega_0),\,\arg(\omega_{1}),\cdots,\arg(\omega_{r})\}
-\min\{\arg(z),\arg(\omega_0),\,\arg(\omega_{1}),\cdots,\arg(\omega_{r})\}<\pi. \nonumber
\end{equation}

Throughout this paper, when we consider the Barnes zeta function,
we always assume that $z,\omega_0$ and ${\boldsymbol{\omega}}$ satisfy the condition {\bf[{OC}]}.
For the convenience, we set
\begin{equation}
\zeta_{0}(s,\,z):=z^{-s}.
\end{equation}
The Barnes zeta function $\zeta_{r+1}(s,\,z\mid \omega_0,\,{\boldsymbol{\omega}} )$ converges absolutely and uniformly for any compact set in the domain ${\Re(s)>r+1}$.
It is well known that $\zeta_{r+1}(s,\,z\mid \omega_0,\,{\boldsymbol{\omega}} )$ is 
continued meromorphically to the whole plane $\mathbf{C}$.

\begin{lem}
\label{prop:multiplication}
If $\alpha\in \mathbf{C}^{\ast}$ satisfies the conditions 
\begin{align}
-\pi< \arg(\alpha)+\arg(z)\leq \pi, 
\,\,\,\,-\pi< \arg(\alpha)+\arg(\omega_{j})\leq \pi \,\,(1\leq j\leq r), 
\end{align}
then the following equality holds.
\begin{equation}
\label{eq:Barnes multiplication}
\zeta_{r}(s,\alpha{z}\mid \alpha{\boldsymbol{\omega}})
=\alpha^{-s}\zeta_{r}(s,{z}\mid {\boldsymbol{\omega}}).
\end{equation}
\end{lem}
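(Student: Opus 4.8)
The plan is to establish the identity first on the half-plane $\Re(s)>r$, where the defining series converges absolutely, and then to propagate it to all $s\in\mathbf{C}$ by meromorphic continuation. First I would write both sides out term by term. Setting $w_{\boldsymbol{m}}:=z+m_{1}\omega_{1}+\cdots+m_{r}\omega_{r}$ for $\boldsymbol{m}=(m_{1},\cdots,m_{r})\in\mathbf{N}_{0}^{r}$, the definition gives
\[
\zeta_{r}(s,\alpha z\mid \alpha{\boldsymbol{\omega}})
=\sum_{\boldsymbol{m}}(\alpha z+m_{1}\alpha\omega_{1}+\cdots+m_{r}\alpha\omega_{r})^{-s}
=\sum_{\boldsymbol{m}}(\alpha\,w_{\boldsymbol{m}})^{-s},
\]
so the whole matter reduces to the single-term factorization $(\alpha\,w_{\boldsymbol{m}})^{-s}=\alpha^{-s}\,w_{\boldsymbol{m}}^{-s}$.

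The heart of the argument is a branch computation. Since all powers are taken with the principal branch $-\pi<\arg(\cdot)\leq\pi$, the factorization $(\alpha w)^{-s}=\alpha^{-s}w^{-s}$ holds precisely when $\log(\alpha w)=\log\alpha+\log w$, i.e.\ when $\arg(\alpha)+\arg(w)\in(-\pi,\pi]$. I would therefore show that this window condition, imposed in the hypotheses only on the generators $z,\omega_{1},\cdots,\omega_{r}$, automatically holds for \emph{every} lattice point $w_{\boldsymbol{m}}$. The key observation is that $w_{\boldsymbol{m}}$ is a combination of $z,\omega_{1},\cdots,\omega_{r}$ with nonnegative coefficients (the coefficient of $z$ being $1$), and under \textbf{[OC]} these generators all lie in a convex cone of angular width $<\pi$; hence $\arg(w_{\boldsymbol{m}})$ lies between $\theta_{\min}:=\min\{\arg z,\arg\omega_{1},\cdots,\arg\omega_{r}\}$ and $\theta_{\max}:=\max\{\arg z,\arg\omega_{1},\cdots,\arg\omega_{r}\}$. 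Because $\theta_{\min}$ and $\theta_{\max}$ are each attained by one of the generators, the hypotheses give $\arg(\alpha)+\theta_{\min}>-\pi$ and $\arg(\alpha)+\theta_{\max}\leq\pi$, whence $\arg(\alpha)+\arg(w_{\boldsymbol{m}})\in(-\pi,\pi]$ for all $\boldsymbol{m}$, and the termwise factorization follows. The same estimate, applied to the generators themselves, shows $\arg(\alpha\omega_{j})=\arg(\alpha)+\arg(\omega_{j})$, so the differences of arguments are unchanged by the scaling and the scaled data $\alpha z,\alpha{\boldsymbol{\omega}}$ again satisfies \textbf{[OC]}; this guarantees that the left-hand side is itself well-defined.

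Summing the termwise identity over $\boldsymbol{m}\in\mathbf{N}_{0}^{r}$ and pulling the constant factor $\alpha^{-s}$ out of the absolutely convergent series then yields $\zeta_{r}(s,\alpha z\mid \alpha{\boldsymbol{\omega}})=\alpha^{-s}\zeta_{r}(s,z\mid{\boldsymbol{\omega}})$ for $\Re(s)>r$. Finally, since both sides are meromorphic on all of $\mathbf{C}$ (as recalled above for the Barnes zeta function) and $\alpha^{-s}$ is entire and nonvanishing, the two meromorphic functions of $s$ agree on a half-plane and therefore agree identically by the identity theorem, giving \eqref{eq:Barnes multiplication}.

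I expect the only genuine obstacle to be the branch bookkeeping of the second paragraph, namely verifying that the additivity $\log(\alpha w)=\log\alpha+\log w$ persists uniformly over all lattice points $w_{\boldsymbol{m}}$; the cone argument controlling $\arg(w_{\boldsymbol{m}})$ is what makes this work. Everything else is the definition, absolute convergence, and standard analytic continuation.
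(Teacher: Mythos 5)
Your proposal is correct and follows essentially the same route as the paper: reduce to $\Re(s)>r$, factor $(\alpha w_{\boldsymbol{m}})^{-s}=\alpha^{-s}w_{\boldsymbol{m}}^{-s}$ termwise in the absolutely convergent series, and extend by analytic continuation. The only difference is that you spell out the branch verification (the cone argument showing $\arg(\alpha)+\arg(w_{\boldsymbol{m}})\in(-\pi,\pi]$ for every lattice point under \textbf{[OC]}), which the paper's two-line computation leaves implicit; this is a welcome filling-in of detail rather than a different proof.
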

\begin{proof}
It suffices to show the equation (\ref{eq:Barnes multiplication}) when $\Re(s)>r$.
For $\alpha\in \mathbf{C}^{\ast}$ satisfying the condition, we have
\begin{align}
\zeta_{r}(s,\alpha{z}\mid \alpha{\boldsymbol{\omega}})
=&\sum_{m_1,\cdots,m_{r}=0}^{\infty }
 \frac{1}{(\alpha{z}+m_{1}\alpha{\omega_{1}}+\cdots+m_{r}\alpha{\omega_{r}})^s} \nonumber \\
=&\sum_{m_1,\cdots,m_{r}=0}^{\infty }
 \frac{1}{\alpha^{s}({z}+m_{1}\omega_{1}+\cdots+m_{r}\omega_{r})^s}
=\alpha^{-s}\zeta_{r}(s,z\mid {\boldsymbol{\omega}}).\nonumber
\end{align}
\end{proof}

\begin{lem}
\label{prop:Barnes zeta}
{\rm(1)}
We have 
\begin{equation}
\label{eq:Barnes zeta relation}
\zeta_{r}(s, z+\omega_k\mid {\boldsymbol{\omega}})
=\zeta_{r}(s, z\mid {\boldsymbol{\omega}})-\zeta_{r-1}(s, z\mid \widehat{{\boldsymbol{\omega}}}(k)),\,\,\,\,(k=1,\cdots,r).
\end{equation}
{\rm(2)}
Let $X_{r}:=\{-m_{1}\omega_{1}-\cdots-m_{r}\omega_{r}\mid m_{1},\cdots,m_{r}\in \mathbf{N}_{0}\}$ and put $X_{0}:=\{0\}$.
The function $\zeta_{r}(s, z\mid {\boldsymbol{\omega}})$ is continued analytically to $\mathbf{C}\backslash X_{r}$ as a multivalued holomorphic function in $z$.
\end{lem}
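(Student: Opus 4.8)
The plan is to prove the two parts in order, deducing (2) from (1) by induction on $r$.

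For part (1) I would argue directly from the defining series in the region of absolute convergence $\Re(s)>r$, and then invoke the known meromorphic continuation in $s$ to promote the identity to all $s$. Writing out $\zeta_{r}(s,z+\omega_{k}\mid\boldsymbol{\omega})$ as a sum over $m_{1},\dots,m_{r}\in\mathbf{N}_{0}$, the $k$-th summation variable enters only through the combination $(m_{k}+1)\omega_{k}$. Substituting $n_{k}=m_{k}+1$ reindexes the $k$-th variable so that it runs over $n_{k}\ge 1$ while all the others still run over $\mathbf{N}_{0}$; the resulting sum is exactly the full series for $\zeta_{r}(s,z\mid\boldsymbol{\omega})$ with the slice $n_{k}=0$ deleted. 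That deleted slice is precisely $\sum_{m_{j}\ge 0,\,j\ne k}\bigl(z+\sum_{j\ne k}m_{j}\omega_{j}\bigr)^{-s}=\zeta_{r-1}(s,z\mid\widehat{\boldsymbol{\omega}}(k))$, which yields \eqref{eq:Barnes zeta relation}. Every rearrangement here is justified by absolute convergence.

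For part (2) I would induct on $r$. The base case $r=0$ is immediate, since $\zeta_{0}(s,z)=z^{-s}$ is a multivalued holomorphic function on $\mathbf{C}^{\ast}=\mathbf{C}\setminus X_{0}$; equivalently one can start at $r=1$, where Lemma \ref{prop:multiplication} gives $\zeta_{1}(s,z\mid\omega)=\omega^{-s}\zeta_{1}(s,z/\omega\mid 1)$, identifying it with a Hurwitz zeta function whose continuation in the second argument to $\mathbf{C}\setminus\mathbf{Z}_{\le 0}$ is classical and whose singular set transforms into $X_{1}$. For the inductive step I would rewrite \eqref{eq:Barnes zeta relation} as
\[
\zeta_{r}(s,z\mid\boldsymbol{\omega})=\zeta_{r}(s,z+\omega_{k}\mid\boldsymbol{\omega})+\zeta_{r-1}(s,z\mid\widehat{\boldsymbol{\omega}}(k)).
\]
By the induction hypothesis the final term is already continued to $\mathbf{C}\setminus X_{r-1}(k)$, where $X_{r-1}(k)=\{-\sum_{j\ne k}m_{j}\omega_{j}\}$. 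Since $\omega_{1},\dots,\omega_{r}\in\mathfrak{H}$ lie in the open upper half-plane, their arguments span less than $\pi$, so the sector in $z$ on which the defining series converges is an angular region of opening greater than $\pi$. The displayed identity then continues $\zeta_{r}$ one step in the $-\omega_{k}$ direction, the only new obstruction being the points of $X_{r-1}(k)$, while the companion rearrangement continues it one step in the $+\omega_{k}$ direction. Iterating, a continuation of $\zeta_{r}$ to $z+N\omega_{k}$ plus a finite sum $\sum_{j=0}^{N-1}\zeta_{r-1}(s,z+j\omega_{k}\mid\widehat{\boldsymbol{\omega}}(k))$ reaches $z$, and the shifts of $X_{r-1}(k)$ one meets along the way are all of the form $-\sum_{i\ne k}m_{i}\omega_{i}-j\omega_{k}\in X_{r}$.

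The multivaluedness is simply the branch of $(\,\cdot\,)^{-s}$ carried around each point of $X_{r}$. \textbf{The main obstacle} I anticipate is the covering-and-consistency step at the end: one must check that the complementary sector (of opening less than $\pi$, pointing in the $-\omega_{k}$ directions) is genuinely exhausted, near any prescribed $z\in\mathbf{C}\setminus X_{r}$, by finitely many translates of the initial sector under $\sum_{k}n_{k}\omega_{k}$, that no continuation introduces a spurious singularity off $X_{r}$, and that the various one-step extensions patch together to a single multivalued function on $\mathbf{C}\setminus X_{r}$. The bookkeeping of which translate reaches a given $z$ and the verification that the extensions agree on overlaps is the delicate part; everything else reduces to the reindexing of part (1) and the induction hypothesis.
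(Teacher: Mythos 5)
Your proposal is correct and follows essentially the paper's own route: part (1) by reindexing the absolutely convergent defining series, and part (2) by iterating the relation (\ref{eq:Barnes zeta relation}) in a single direction $\omega_k$ and inducting on $r$. The covering-and-patching step you flag as the main obstacle is dispatched in the paper by first invoking Lemma \ref{prop:multiplication} to rotate the parameters so that $\Re(\omega_{k})>0$ for all $k$, whereupon the continuation regions become the nested half-planes $\{z\in \mathbf{C}\mid \Re(z)>-n\Re(\omega_{r})\}\backslash X_{r}$, which visibly exhaust $\mathbf{C}\backslash X_{r}$ with no sector bookkeeping required.
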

\begin{proof}
(1) :
The relation follows immediately from the definition.\\
(2) :
By Lemma \ref{prop:multiplication}, it suffices to show the assertion when $\Re(\omega_{k})>0(k=1,\cdots,r)$.
By the relation $\zeta_{1}(s, z+1\mid 1)=\zeta_{1}(s, z\mid 1)-z^{-s}$, we have for all $n\in \mathbf{N}$
\begin{equation}
\zeta_{1}(s, z\mid 1)=\zeta_{1}(s, z+n\mid 1)+\sum_{m=0}^{n-1}(z+m)^{-s}.\nonumber
\end{equation}
Hence, one finds that $\zeta_{1}(s, z\mid 1)$ is continued analytically to the region $\{z\in \mathbf{C}\mid \Re(z)>-n\}\backslash X_{1}$ as a holomorphic function in $z$.
This proves the case $r=1$.\\
\indent
For $r\geq 2$, by (\ref{eq:Barnes zeta relation}), we have
\begin{equation}
\zeta_{r}(s, z\mid {\boldsymbol{\omega}})
=\zeta_{r}(s, z+n\omega_{r}\mid {\boldsymbol{\omega}})
+\sum_{m=0}^{n-1}\zeta_{r-1}(s, z+m\omega_{r}\mid \widehat{{\boldsymbol{\omega}}}(r)).\nonumber
\end{equation}
By induction, we observe that $\zeta_{r}(s, z\mid {\boldsymbol{\omega}})$ is continued analytically to $\{z\in \mathbf{C}\mid \Re(z)>-n\Re(\omega_{r})\}\backslash X_{r}$ for all $n\in \mathbf{N}$.
Hence the desired claim follows.
\end{proof}
In order to describe special values of the Barnes zeta function, we recall the multiple Bernoulli polynomials \cite{Ba}. 
We follow the notational convention in \cite{Na}.
\begin{dfn}
We define the multiple Bernoulli polynomials $B_{r,n}(z\mid {\boldsymbol{\omega}})$ by a generating function as
\begin{equation}
\frac{t^{r}e^{zt}}{(e^{\omega_{1}t}-1)\cdots(e^{\omega_{r}t}-1)}
=\sum_{k=0}^{\infty}B_{r,k}(z\mid {\boldsymbol{\omega}})\frac{t^k}{k!}. 
\end{equation}
Here $z,\omega_{1},\cdots,\omega_{r}$ do not necessary satisfy the condition {\bf[{OC}]}.
\end{dfn}
The multiple Bernoulli polynomial $B_{r,n}(z\mid {\boldsymbol{\omega}})$ is actually a polynomial of degree $n$ in $z$ and is symmetric in $\omega_{1},\cdots,\omega_{r}$.
\begin{exa}
\begin{align}
\label{eq:B_{r,0}}
B_{r,0}(z\mid {\boldsymbol{\omega}})=&\frac{1}{\vert {\boldsymbol{\omega}}\vert},\\
\label{eq:B_{2,2}}
B_{2,2}(z\mid\omega_{1},\omega_{2})=&
\frac{1}{\omega_{1}\omega_{2}}z^2-\frac{\omega_{1}+\omega_{2}}{\omega_{1}\omega_{2}}z+\frac{\omega_{1}^2+\omega_{2}^2+3\omega_{1}\omega_{2}}{6\omega_{1}\omega_{2}}.
\end{align}
\end{exa}
The following lemma is essentially due to Barnes and can be seen in
 \cite{Na} (see the formulas (12)-(17)).
\begin{lem}
It holds that 
\label{prop:multiple Brnoulli1}
\begin{align}
\label{eq:multiple Brnoulli1}
B_{r,n}(cz\mid c{\boldsymbol{\omega}})=&\,c^{n-r}B_{r,n}(z\mid {\boldsymbol{\omega}}) \,\,\, (c\in \mathbf{C}^{\ast}), \\
\label{eq:multiple Brnoulli2}
B_{r,n}(\vert {\boldsymbol{\omega}}\vert^{+}-z\mid {\boldsymbol{\omega}})=&\,(-1)^{n}B_{r,n}(z\mid {\boldsymbol{\omega}}), \\
\label{eq:multiple Brnoulli3}
B_{r,n}(z+\omega_{j}\mid {\boldsymbol{\omega}})-B_{r,n}(z\mid {\boldsymbol{\omega}})=&\,nB_{r-1,n-1}(z\mid \widehat{{\boldsymbol{\omega}}}(j)), \\
\label{eq:multiple Brnoulli4}
B_{r,n}(z\mid {\boldsymbol{\omega}}^{-}[j])=&\,-B_{r,n}(z+\omega_{j}\mid {\boldsymbol{\omega}}), \\
\label{eq:multiple Brnoulli5}
B_{r,n}(z\mid {\boldsymbol{\omega}})+B_{r,n}(z\mid {\boldsymbol{\omega}}^{-}[j])=&\,-nB_{r-1,n-1}(z\mid \widehat{{\boldsymbol{\omega}}}(j)), \\
\label{eq:multiple Brnoulli6}
\frac{d}{dz}B_{r,n}(z\mid {\boldsymbol{\omega}})=&\,nB_{r,n-1}(z\mid {\boldsymbol{\omega}}).
\end{align}
\end{lem}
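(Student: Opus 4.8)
The plan is to derive all six identities uniformly from the defining generating function. Write $G_r(t,z\mid{\boldsymbol{\omega}}):=t^{r}e^{zt}/\prod_{k=1}^{r}(e^{\omega_{k}t}-1)$, which is holomorphic in $t$ near $t=0$, so that by definition $G_r(t,z\mid{\boldsymbol{\omega}})=\sum_{k\geq0}B_{r,k}(z\mid{\boldsymbol{\omega}})\,t^{k}/k!$ is its Taylor expansion. For each formula I would apply a suitable substitution in $t$, $z$, or ${\boldsymbol{\omega}}$, simplify $G_r$ into closed form, and then read off the claim by comparing the coefficients of $t^{n}/n!$ on both sides; coefficient comparison is legitimate because in every case both sides are Taylor expansions of the same meromorphic function of $t$ at the origin.

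The two differential-type relations are immediate. For \eqref{eq:multiple Brnoulli6}, differentiating in $z$ gives $\partial_z G_r=t\,G_r$, and the coefficient of $t^{n}/n!$ yields $\tfrac{d}{dz}B_{r,n}=nB_{r,n-1}$. For \eqref{eq:multiple Brnoulli1}, the substitution $t\mapsto t/c$ in $G_r(t,cz\mid c{\boldsymbol{\omega}})$ pulls $c^{-r}$ out of the numerator and collapses the denominator, producing $c^{-r}G_r(t,z\mid{\boldsymbol{\omega}})$; matching coefficients and accounting for the extra $c^{-n}$ coming from $t^{n}\mapsto(t/c)^{n}$ gives the factor $c^{n-r}$. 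For \eqref{eq:multiple Brnoulli3}, the difference $G_r(t,z+\omega_j\mid{\boldsymbol{\omega}})-G_r(t,z\mid{\boldsymbol{\omega}})$ carries a factor $e^{\omega_j t}-1$ in the numerator, which cancels the $j$-th denominator factor and leaves $t\,G_{r-1}(t,z\mid\widehat{{\boldsymbol{\omega}}}(j))$; the coefficient of $t^{n}/n!$ is exactly \eqref{eq:multiple Brnoulli3}.

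Formula \eqref{eq:multiple Brnoulli4} is the pivotal computation: replacing $\omega_j$ by $-\omega_j$ and using the identity $1/(e^{-\omega_j t}-1)=-e^{\omega_j t}/(e^{\omega_j t}-1)$ converts $G_r(t,z\mid{\boldsymbol{\omega}}^{-}[j])$ into $-G_r(t,z+\omega_j\mid{\boldsymbol{\omega}})$, from which \eqref{eq:multiple Brnoulli4} follows directly. Then \eqref{eq:multiple Brnoulli5} is obtained by adding $B_{r,n}(z\mid{\boldsymbol{\omega}})$ to both sides of \eqref{eq:multiple Brnoulli4} and invoking \eqref{eq:multiple Brnoulli3}. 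Finally \eqref{eq:multiple Brnoulli2} combines the two reflections $t\mapsto-t$ and $z\mapsto\vert{\boldsymbol{\omega}}\vert^{+}-z$: writing $e^{-\omega_k t}-1=-e^{-\omega_k t}(e^{\omega_k t}-1)$ in each denominator factor, the $r$ signs and the product of prefactors $\prod_k e^{-\omega_k t}=e^{-\vert{\boldsymbol{\omega}}\vert^{+}t}$ cancel against $(-t)^{r}e^{(\vert{\boldsymbol{\omega}}\vert^{+}-z)(-t)}$, so that $G_r(-t,\vert{\boldsymbol{\omega}}\vert^{+}-z\mid{\boldsymbol{\omega}})=G_r(t,z\mid{\boldsymbol{\omega}})$; comparing coefficients while tracking the $(-1)^{n}$ from $(-t)^{n}$ gives $(-1)^{n}B_{r,n}(\vert{\boldsymbol{\omega}}\vert^{+}-z\mid{\boldsymbol{\omega}})=B_{r,n}(z\mid{\boldsymbol{\omega}})$. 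No genuine analytic obstacle arises; the only point requiring care is the bookkeeping of the powers of $c$ and $-1$ and of the exponential prefactors in \eqref{eq:multiple Brnoulli2}, which is where a sign error would most easily slip in.
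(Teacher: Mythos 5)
Your proof is correct and complete: all six identities follow exactly as you describe from comparing coefficients of $t^{n}/n!$ in the generating function, the key manipulations $1/(e^{-\omega_j t}-1)=-e^{\omega_j t}/(e^{\omega_j t}-1)$ and $G_r(-t,\vert{\boldsymbol{\omega}}\vert^{+}-z\mid{\boldsymbol{\omega}})=G_r(t,z\mid{\boldsymbol{\omega}})$ are right, and the sign and factorial bookkeeping (the shift $t^{m+1}/m!$ producing the factor $n$ in the third and sixth identities) checks out. The paper gives no proof of this lemma at all — it attributes the formulas to Barnes and cites Narukawa's formulas (12)--(17) — and your generating-function derivation is precisely the standard argument behind those citations, so it fills the gap rather than diverging from the paper's approach.
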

The special values of the Barnes multiple zeta functions are given by the multiple Bernoulli polynomial as follows.
\begin{lem}
{\rm(\cite{Ba})}
{\rm(1)}
For all $m\in \mathbf{N}$,
\begin{equation}
\label{eq:BarnesSV}
\zeta_{r}(1-m,\,z\mid {\boldsymbol{\omega}})=(-1)^{r}\frac{(m-1)!}{(m+r-1)!}B_{r,r+m-1}(z\mid {\boldsymbol{\omega}}).
\end{equation}
{\rm(2)}
For $m=1,\cdots,r$,
\begin{equation}
\label{eq:BarnesSV2}
\mathop{\rm Res}_{s=m}\zeta_{r}(s,\,z\mid {\boldsymbol{\omega}})ds
=\frac{(-1)^{r-m}}{(m-1)!(r-m)!}B_{r,r-m}(z\mid {\boldsymbol{\omega}}).
\end{equation}
\end{lem}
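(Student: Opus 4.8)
The plan is to obtain both identities from the Mellin (Gamma-integral) representation of $\zeta_r$ fed by the generating function of the multiple Bernoulli polynomials, reading off the residues and the negative-integer values from the pole/zero pattern of $1/\Gamma(s)$. First I would reduce to the case $\Re(\omega_k)>0$ $(k=1,\dots,r)$ and $\Re(z)>0$: since $z,\omega_1,\dots,\omega_r$ satisfy \textbf{[OC]}, their arguments lie in an interval of length $<\pi$, so there is an $\alpha=e^{i\theta}\in\mathbf{C}^{\ast}$ carrying all of $\alpha z,\alpha\omega_k$ into the right half-plane. By the multiplication formula (\ref{eq:Barnes multiplication}) together with the homogeneity (\ref{eq:multiple Brnoulli1}) of $B_{r,n}$, both sides of (\ref{eq:BarnesSV}) and of (\ref{eq:BarnesSV2}) scale by the same power of $\alpha$, so it is enough to treat this normalized range. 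There, writing each summand as $(z+m_1\omega_1+\cdots+m_r\omega_r)^{-s}=\frac{1}{\Gamma(s)}\int_0^\infty t^{s-1}e^{-(z+m_1\omega_1+\cdots+m_r\omega_r)t}\,dt$ and summing the geometric series $\sum_{m_k=0}^\infty e^{-m_k\omega_k t}=\frac{1}{1-e^{-\omega_k t}}$ in each index gives, for $\Re(s)>r$,
\begin{equation}
\zeta_r(s,z\mid\boldsymbol{\omega})=\frac{1}{\Gamma(s)}\int_0^\infty t^{s-1}\frac{e^{-zt}}{\prod_{k=1}^r(1-e^{-\omega_k t})}\,dt, \nonumber
\end{equation}
the interchange being justified by absolute convergence.

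Next I would insert the Bernoulli expansion. Replacing $t$ by $-t$ in the defining generating function and using $e^{-\omega_k t}-1=-(1-e^{-\omega_k t})$ yields
\begin{equation}
\frac{e^{-zt}}{\prod_{k=1}^r(1-e^{-\omega_k t})}=t^{-r}\sum_{n=0}^\infty\frac{(-1)^n B_{r,n}(z\mid\boldsymbol{\omega})}{n!}\,t^n, \nonumber
\end{equation}
valid for $|t|<\rho$, where $\rho$ is the distance from $0$ to the nearest zero of some $1-e^{-\omega_k t}$. I then split $\int_0^\infty=\int_0^\delta+\int_\delta^\infty$ with $0<\delta<\rho$. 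The tail $\frac{1}{\Gamma(s)}\int_\delta^\infty$ is entire in $s$, since the integrand decays exponentially. On $\int_0^\delta$ I substitute the expansion and integrate term by term, which gives
\begin{equation}
\frac{1}{\Gamma(s)}\sum_{n=0}^\infty\frac{(-1)^n B_{r,n}(z\mid\boldsymbol{\omega})}{n!}\,\frac{\delta^{\,s-r+n}}{s-r+n}. \nonumber
\end{equation}
This series is meromorphic on all of $\mathbf{C}$ with at most simple poles at $s=r-n$, and so provides the meromorphic continuation of $\zeta_r$ while concentrating all the relevant arithmetic in the prefactor $1/\Gamma(s)$.

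From here the two formulas drop out. Since $1/\Gamma(s)$ vanishes at $s=0,-1,-2,\dots$, the only surviving poles are $s=r,r-1,\dots,1$, i.e. $n=r-m$ with $m=1,\dots,r$; there $\frac{1}{s-r+n}$ has residue $1$ and $\delta^{\,s-r+n}=1$, so
\begin{equation}
\mathrm{Res}_{s=m}\zeta_r(s,z\mid\boldsymbol{\omega})=\frac{1}{\Gamma(m)}\frac{(-1)^{r-m}B_{r,r-m}(z\mid\boldsymbol{\omega})}{(r-m)!}, \nonumber
\end{equation}
which is (\ref{eq:BarnesSV2}) because $\Gamma(m)=(m-1)!$. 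For the value at $s=1-m$ $(m\in\mathbf{N})$ the zero of $1/\Gamma$ annihilates the entire tail and every term with $n\neq r+m-1$, while for $n=r+m-1$ one has $s-r+n=s-(1-m)$ and $\delta^{\,s-r+n}\to1$, so using $\lim_{s\to1-m}\frac{1}{\Gamma(s)(s-r+n)}=\bigl(\mathrm{Res}_{s=1-m}\Gamma(s)\bigr)^{-1}=(-1)^{m-1}(m-1)!$ one is left with $(-1)^{r}\frac{(m-1)!}{(m+r-1)!}B_{r,r+m-1}(z\mid\boldsymbol{\omega})$, which is (\ref{eq:BarnesSV}).

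I expect the only real technical point to be the justification of the term-by-term integration and of the meromorphy of the resulting series; this is exactly why I would split at $\delta<\rho$ rather than at $1$, for then the coefficient bound $\frac{|B_{r,n}(z\mid\boldsymbol{\omega})|}{n!}\le C\rho_0^{-n}$ for any $\rho_0<\rho$ makes both the absolute convergence on $(0,\delta)$ and the uniform-on-compacta convergence of the pole expansion routine. No genuinely hard algebraic identity is needed; the work is this convergence bookkeeping together with the sign bookkeeping in the two limit evaluations.
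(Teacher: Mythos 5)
Your proof is correct, and there is in fact nothing in the paper to compare it against: the lemma is stated with the citation \cite{Ba} and no proof is given, so you have supplied the argument the paper outsources to Barnes. What you wrote is essentially the classical proof (Barnes' own, also reproduced in Narukawa \cite{Na}): reduce via the multiplication formula (\ref{eq:Barnes multiplication}) and the homogeneity (\ref{eq:multiple Brnoulli1}) to $\Re(z),\Re(\omega_k)>0$ --- and your scaling check is the right compatibility to verify, since both sides of (\ref{eq:BarnesSV}) scale like $\alpha^{m-1}$ and both sides of (\ref{eq:BarnesSV2}) like $\alpha^{-m}$ --- then expand the Mellin integrand by the Bernoulli generating function and play the resulting simple poles at $s=r-n$ against the zeros of $1/\Gamma(s)$. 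The sign bookkeeping is right: $\Res{s=1-m}\Gamma(s)=(-1)^{m-1}/(m-1)!$ gives your limit $(-1)^{m-1}(m-1)!$, which against the coefficient $(-1)^{r+m-1}B_{r,r+m-1}/(r+m-1)!$ produces exactly the $(-1)^{r}$ of (\ref{eq:BarnesSV}); and at $s=m$ only the $n=r-m$ term contributes a residue, giving (\ref{eq:BarnesSV2}). Two small tidyings are worth making explicit. First, the quantifiers should be ordered $\delta<\rho_0<\rho$: you need $\delta/\rho_0<1$ for the geometric bound $|B_{r,n}|/n!\le C\rho_0^{-n}$ to sum, so $\rho_0$ must sit strictly between $\delta$ and $\rho$, which your phrasing leaves implicit. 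Second, the Mellin representation tacitly uses that $\prod_{k=1}^r(1-e^{-\omega_k t})$ is zero-free on $t\in(0,\infty)$; this holds because $\Re(\omega_k t)>0$ forces $e^{-\omega_k t}\neq 1$, and it is precisely what your initial rotation into the right half-plane buys you. With those two remarks inserted, the argument is complete.
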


\section{Bilateral zeta functions}
\begin{dfn}
\label{dfn:bilateral dfn}
Let $r \in \mathbf{N}_{0}$. Assume that $z,\omega_0$ and ${\boldsymbol{\omega}}=(\omega_{1},\cdots,\omega_{r})$ satisfy the strong one-side condition {\bf[{SOC}]}, that is given by 
\begin{align}
\min\{\arg(\pm\omega_0)\} \leq& \arg(z)\leq \max\{\arg(\pm\omega_0)\}, \nonumber \\
\min\{\arg(\pm\omega_0)\}<&\arg(\omega_{j})<\max\{\arg(\pm\omega_0)\}\,\,(1\leq j\leq r). \nonumber
\end{align}

%
%

For $\Re(s)>r+1$, we define the bilateral ($r+1$)-ple zeta function $\xi_{r+1}$ as follows.
\begin{align}
\label{eq:def bilateral}
\xi_{r+1}(s, z\mid \omega_0{'};\,{\boldsymbol{\omega}})
:=&\zeta_{r+1}(s, z+\omega_{0}\mid \omega_{0},\,{\boldsymbol{\omega}} )
+\zeta_{r+1}(s, z\mid -\omega_{0},\,{\boldsymbol{\omega}}).
\end{align}
Here, we put $\omega_0{'}:=\vert \omega_0 \vert e^{i\max\{\arg(\pm\omega_0)\}}$.
\end{dfn}
Throughout this paper, we assume that $0< \arg(\omega_0)\leq \pi$. Hence we may write
\begin{align}
\xi_{r+1}(s, z\mid \omega_0{'};\,{\boldsymbol{\omega}})
=&\xi_{r+1}(s, z\mid \omega_0;\,{\boldsymbol{\omega}}) \nonumber \\
=&\zeta_{r+1}(s, z+\omega_{0}\mid \omega_{0},\,{\boldsymbol{\omega}} )
+\zeta_{r+1}(s, z\mid e^{-\pi i}\omega_{0},\,{\boldsymbol{\omega}}). \nonumber
\end{align}
In addition, when we consider the bilateral zeta function, we always assume that 
$z,\omega_0,\,{\boldsymbol{\omega}}$ satisfy the condition {\bf[{SOC}]}.
\begin{lem}
\label{prop:bilateral zeta1}
{\rm(1)}
The series expression of the bilateral zeta function $\xi_{r+1}(s, z\mid \omega_0;\,{\boldsymbol{\omega}})$ converges absolutely in the domain ${\Re(s)>r+1}$.\\
{\rm(2)}
It holds that
\begin{align}
\xi_{r+1}(s, z\mid \omega_{0};\,{\boldsymbol{\omega}}) & 
=\zeta_{r+1}(s, z\mid \omega_{0},\,{\boldsymbol{\omega}} )
+\zeta_{r+1}(s, z+e^{-\pi i}\omega_{0}\mid e^{-\pi i}\omega_{0},\,{\boldsymbol{\omega}} )\\
{} & 
=\zeta_{r+1}(s, z\mid \omega_{0},\,{\boldsymbol{\omega}} )
+\zeta_{r+1}(s, z\mid e^{-\pi i}\omega_{0},\,{\boldsymbol{\omega}} )
-\zeta_{r}(s, z\mid {\boldsymbol{\omega}} ).
\end{align}
{\rm(3)} The bilateral zeta function $\xi_{r+1}(s, z\mid \omega_{0};\,{\boldsymbol{\omega}})$ is continued analytically to the whole $s$-plane $\mathbf{C}$. 
Moreover $\xi_{r+1}(s, z\mid \omega_{0};\,{\boldsymbol{\omega}})$ is continued analytically to a multivalued holomorphic function of $z$ in 
$\mathbf{C}\backslash (X_{r}\cup\{n\omega_{0}\mid n\in \mathbf{Z}\})$.
\end{lem}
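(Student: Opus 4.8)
The plan is to treat the three assertions in turn, in each case reducing to the corresponding property of the two Barnes zeta functions whose sum defines $\xi_{r+1}$.

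For (1), it suffices to check that each of the two summands $\zeta_{r+1}(s,z+\omega_0\mid\omega_0,\boldsymbol{\omega})$ and $\zeta_{r+1}(s,z\mid e^{-\pi i}\omega_0,\boldsymbol{\omega})$ converges absolutely for $\Re(s)>r+1$; an absolutely convergent sum of two such series is again absolutely convergent. The key point is that the hypothesis {\bf[{SOC}]} forces the period vectors of each summand to lie in an open sector of opening $<\pi$: since $\arg(\omega_j)$ lies strictly between $\arg(e^{-\pi i}\omega_0)$ and $\arg(\omega_0)$, the period sets $\{\omega_0,\boldsymbol{\omega}\}$ and $\{e^{-\pi i}\omega_0,\boldsymbol{\omega}\}$ each span less than a half-plane, which is exactly the configuration underlying the Barnes-type absolute convergence of Section 3. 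The one delicate point is that {\bf[{SOC}]} permits $\arg(z)$ to sit on the boundary $\arg(\pm\omega_0)$, so that the shift of one summand may lie on the very edge of its period sector; there I would note that the growth of the denominators is controlled by the periods alone, so convergence is unaffected.

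For (2), both identities follow at once from the difference relation (\ref{eq:Barnes zeta relation}) applied to the distinguished period. Taking $\omega_k=\omega_0$ gives $\zeta_{r+1}(s,z+\omega_0\mid\omega_0,\boldsymbol{\omega})=\zeta_{r+1}(s,z\mid\omega_0,\boldsymbol{\omega})-\zeta_r(s,z\mid\boldsymbol{\omega})$, and taking $\omega_k=e^{-\pi i}\omega_0$ gives $\zeta_{r+1}(s,z+e^{-\pi i}\omega_0\mid e^{-\pi i}\omega_0,\boldsymbol{\omega})=\zeta_{r+1}(s,z\mid e^{-\pi i}\omega_0,\boldsymbol{\omega})-\zeta_r(s,z\mid\boldsymbol{\omega})$. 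Substituting the first of these into Definition \ref{dfn:bilateral dfn} produces the first displayed line of (2), and then substituting the second produces the line carrying the extra term $-\zeta_r(s,z\mid\boldsymbol{\omega})$; all rearrangements are valid termwise in $\Re(s)>r+1$.

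For the continuation in $s$ in (3), I would start from the last identity of (2), writing $\xi_{r+1}=\zeta_{r+1}(s,z\mid\omega_0,\boldsymbol{\omega})+\zeta_{r+1}(s,z\mid e^{-\pi i}\omega_0,\boldsymbol{\omega})-\zeta_r(s,z\mid\boldsymbol{\omega})$. Each summand continues meromorphically to $\mathbf{C}$ with poles only at $s=1,\dots,r+1$ (respectively $s=1,\dots,r$), so $\xi_{r+1}$ is at worst meromorphic, and it remains to show the poles cancel. Computing residues by (\ref{eq:BarnesSV2}) and invoking the reflection identity (\ref{eq:multiple Brnoulli5}) in the form $B_{r+1,n}(z\mid\omega_0,\boldsymbol{\omega})+B_{r+1,n}(z\mid e^{-\pi i}\omega_0,\boldsymbol{\omega})=-n\,B_{r,n-1}(z\mid\boldsymbol{\omega})$ (recall $e^{-\pi i}\omega_0=-\omega_0$ as a complex number), the residues of the two $(r+1)$-ple terms at $s=m$ combine to cancel precisely that of $-\zeta_r$ for $m=1,\dots,r$; at $s=r+1$ one uses instead that $B_{r+1,0}(z\mid\omega_0,\boldsymbol{\omega})=1/(\omega_0\prod_j\omega_j)$ by (\ref{eq:B_{r,0}}) is odd in $\omega_0$, so the two residues cancel directly. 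Hence $\xi_{r+1}$ is entire in $s$. For the continuation in $z$, I would continue each of the three Barnes terms by Lemma \ref{prop:Barnes zeta}(2), making $\xi_{r+1}$ a multivalued holomorphic function of $z$ off the union of the three exceptional lattices. Here lies the \textbf{main obstacle}: the naive union of these lattices is the full bilateral array $\{\,n\omega_0-\sum_k m_k\omega_k\mid n\in\mathbf{Z},\,m_k\ge 0\,\}$, strictly larger than the claimed exceptional set $X_r\cup\{n\omega_0\mid n\in\mathbf{Z}\}$, so the crux is to determine exactly which candidate branch points survive in the sum. I expect this bookkeeping — tracking how the branch-point singularities coming from the two oppositely oriented Barnes families interact — to be the principal difficulty, and I would attack it by analyzing the bilateral series $\sum_{m_0\in\mathbf{Z}}\sum_{m_k\ge 0}(z+m_0\omega_0+\sum_k m_k\omega_k)^{-s}$ directly in a neighborhood of each candidate point.
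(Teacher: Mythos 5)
Your treatment of (1) and (2) coincides with the paper's, which disposes of both parts with the single remark that they are ``obvious from the definition and (\ref{eq:Barnes zeta relation})''; your derivation via the two specializations of (\ref{eq:Barnes zeta relation}) is exactly the intended one, and your extra care about $\arg(z)$ lying on the boundary ray $\arg(\pm\omega_0)$ is a harmless refinement. For the continuation in $s$ you genuinely diverge from the paper, to your advantage. The paper's proof of (3) says only that the first assertion ``follows from the relations above and analytic continuation of the Barnes zeta function,'' which on its face yields meromorphy, not holomorphy; the pole cancellation you carry out --- residues via (\ref{eq:BarnesSV2}) combined through (\ref{eq:multiple Brnoulli5}) at $s=1,\dots,r$, where the two $(r+1)$-ple residues sum to $\frac{(-1)^{r-m}}{(m-1)!(r-m)!}B_{r,r-m}(z\mid{\boldsymbol{\omega}})$ and cancel against $-\zeta_r$, plus the oddness of $B_{r+1,0}(z\mid\omega_0,{\boldsymbol{\omega}})=1/(\omega_0\omega_1\cdots\omega_r)$ in $\omega_0$ at $s=r+1$ --- appears nowhere in this proof. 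The paper only obtains entirety later (Corollary \ref{cor:bilateral zero}), via the multiplication formula of Corollary \ref{cor:bilateral multiplication2} and the Fourier expansion of Theorem \ref{thm:multiple basic formula}, whose $1/\Gamma(s)$ prefactor kills all would-be poles. Your residue argument is correct and makes the lemma self-contained at this point of the paper.

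On the continuation in $z$, the obstacle you flag is real, and you should know that the paper does not clear it either: its proof asserts that $\zeta_{r+1}(s,z\mid \omega_0,{\boldsymbol{\omega}})$ and $\zeta_{r+1}(s,z\mid e^{-\pi i}\omega_0,{\boldsymbol{\omega}})$ continue off $X_r\cup\{-n\omega_0\mid n\in\mathbf{N}_0\}$ and $X_r\cup\{n\omega_0\mid n\in\mathbf{N}_0\}$ respectively, citing Lemma \ref{prop:Barnes zeta}, but that lemma only gives continuation off the full $(r+1)$-fold cones $\{-m_0(\pm\omega_0)-m_1\omega_1-\cdots-m_r\omega_r\mid m_j\in\mathbf{N}_0\}$, which for $r\geq 1$ strictly contain the stated sets (they include mixed points such as $-\omega_0-\omega_1$). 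Moreover the mixed points genuinely cannot be removed. Already for $r=1$ and non-integer $s$, take $z^{*}=\omega_0-\omega_1$: iterating (\ref{eq:Barnes zeta relation}) as in the proof of Lemma \ref{prop:Barnes zeta} gives
\begin{equation}
\zeta_{2}(s,z\mid e^{-\pi i}\omega_0,\omega_1)
=\zeta_{2}(s,z-2\omega_0\mid e^{-\pi i}\omega_0,\omega_1)+\zeta_{1}(s,z-\omega_0\mid \omega_1)+\zeta_{1}(s,z\mid \omega_1),\nonumber
\end{equation}
and near $z^{*}$ the middle term carries the singular germ $(z-z^{*})^{-s}$ (its branch point at $-\omega_1$), while the other pieces of $\xi_2$ are regular there; so $z^{*}$ is a genuine branch point of $\xi_{2}$ although $z^{*}\notin X_1\cup\{n\omega_0\mid n\in\mathbf{Z}\}$. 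Structurally this is forced by the periodicity $\xi_{r+1}(s,z+\omega_0\mid \omega_0;\,{\boldsymbol{\omega}})=\xi_{r+1}(s,z\mid \omega_0;\,{\boldsymbol{\omega}})$ established in the lemma immediately following: the branch locus must be invariant under $z\mapsto z+\omega_0$, and $X_r\cup\{n\omega_0\mid n\in\mathbf{Z}\}$ is not. The conclusion is that the exceptional set in the statement should be read as the sumset $\{n\omega_0-m_1\omega_1-\cdots-m_r\omega_r\mid n\in\mathbf{Z},\ m_k\in\mathbf{N}_0\}$ --- your ``full bilateral array'' --- presumably a slip of $\cup$ for $+$ in the source; with that emendation your union argument already completes the proof, and the case-by-case analysis you propose would only confirm that the candidate points are singular, not eliminate them. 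Nothing later in the paper, where $z$ is kept in $\mathfrak{H}$ or in $D$, uses the finer set.
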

\begin{proof}
(1), (2) :
 These are obvious from the definition and (\ref{eq:Barnes zeta relation}) of Lemma \ref{prop:Barnes zeta}.\\
(3) : The first assertion follows from the relations above and analytic continuation of the Barnes zeta function.
By Lemma \ref{prop:Barnes zeta}, we see that $\zeta_{r+1}(s, z\mid \omega_{0},\,{\boldsymbol{\omega}} ), 
\zeta_{r+1}(s, z\mid e^{-\pi i}\omega_{0},\,{\boldsymbol{\omega}} )$ and $\zeta_{r}(s, z\mid {\boldsymbol{\omega}} )$ 
are continued analytically to multivalued holomorphic functions of $z$ in $\mathbf{C}\backslash (X_{r}\cup\{-n\omega_{0}\mid n\in \mathbf{N}_{0}\})$,
\,$\mathbf{C}\backslash (X_{r}\cup\{n\omega_{0}\mid n\in \mathbf{N}_{0}\})$ and $\mathbf{C}\backslash X_{r}$ respectively.
\end{proof}
\begin{lem}
{\rm(1)}
It holds that 
\begin{equation}
\xi_{r+1}(s, z+\omega_0\mid \omega_0;\,{\boldsymbol{\omega}})=\xi_{r+1}(s, z\mid \omega_0;\,{\boldsymbol{\omega}}).
\end{equation}
{\rm(2)}
For $k=1,\cdots,r$,
\begin{equation}
\xi_{r+1}(s, z+\omega_k\mid \omega_0;\,{\boldsymbol{\omega}})=\xi_{r+1}(s, z\mid \omega_0;\,{\boldsymbol{\omega}})
-\xi_{r}(s, z\mid \omega_0;\,\widehat{{\boldsymbol{\omega}}}(k)).
\end{equation}
\end{lem}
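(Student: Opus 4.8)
The plan is to reduce both identities to the elementary difference relation (\ref{eq:Barnes zeta relation}) of Lemma~\ref{prop:Barnes zeta}(1), applied term-by-term to the two summands in the definition (\ref{eq:def bilateral}) of $\xi_{r+1}$. I would first establish each identity in the half-plane $\Re(s)>r+1$ of absolute convergence and then invoke the analytic continuation from Lemma~\ref{prop:bilateral zeta1}(3) to extend it to all $s\in\mathbf{C}$ (and to the multivalued holomorphic continuation in $z$). I also note at the outset that, since $\omega_0$, $e^{-\pi i}\omega_0$ and all the $\omega_j$ lie in one closed half-plane pinned down by \textbf{[SOC]} (with $\arg(\omega_0)\in(0,\pi]$), and a closed half-plane through the origin is a convex cone closed under addition, the shifted variables $z+\omega_0$ and $z+\omega_k$ again satisfy \textbf{[SOC]}. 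Hence every Barnes zeta function written below is legitimately defined and the recurrence (\ref{eq:Barnes zeta relation}) applies at each step.

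For (1), I would expand $\xi_{r+1}(s,z+\omega_0\mid\omega_0;\boldsymbol{\omega})=\zeta_{r+1}(s,z+2\omega_0\mid\omega_0,\boldsymbol{\omega})+\zeta_{r+1}(s,z+\omega_0\mid e^{-\pi i}\omega_0,\boldsymbol{\omega})$ and apply (\ref{eq:Barnes zeta relation}) to each summand, treating as the shifted period $\omega_0$ in the first term and $e^{-\pi i}\omega_0$ in the second. The first term becomes $\zeta_{r+1}(s,z+\omega_0\mid\omega_0,\boldsymbol{\omega})-\zeta_{r}(s,z+\omega_0\mid\boldsymbol{\omega})$, while rewriting the second via the relation with period $e^{-\pi i}\omega_0$ gives $\zeta_{r+1}(s,z\mid e^{-\pi i}\omega_0,\boldsymbol{\omega})+\zeta_{r}(s,z+\omega_0\mid\boldsymbol{\omega})$. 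The two corrections $\mp\zeta_{r}(s,z+\omega_0\mid\boldsymbol{\omega})$ cancel, leaving precisely $\xi_{r+1}(s,z\mid\omega_0;\boldsymbol{\omega})$. As a sanity check one can note that $\xi_{r+1}$ is just the lattice sum over $m_0\in\mathbf{Z}$, $m_1,\dots,m_r\in\mathbf{N}_{0}$, for which the $\omega_0$-periodicity is manifest; but I would keep the recurrence argument, since it delivers the identity directly at the level of meromorphic functions.

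For (2), with $k\in\{1,\dots,r\}$, I would again expand $\xi_{r+1}(s,z+\omega_k\mid\omega_0;\boldsymbol{\omega})$ by the definition and apply (\ref{eq:Barnes zeta relation}), this time shifting the period $\omega_k$ in both summands. The first Barnes term sheds a correction $\zeta_{r}(s,z+\omega_0\mid\omega_0,\widehat{\boldsymbol{\omega}}(k))$ and the second sheds $\zeta_{r}(s,z\mid e^{-\pi i}\omega_0,\widehat{\boldsymbol{\omega}}(k))$, where $\widehat{\boldsymbol{\omega}}(k)$ deletes $\omega_k$. The two unshifted principal parts reassemble into $\xi_{r+1}(s,z\mid\omega_0;\boldsymbol{\omega})$, and the decisive observation is that the two correction terms are exactly the two summands in the definition of the $r$-ple bilateral zeta function $\xi_{r}(s,z\mid\omega_0;\widehat{\boldsymbol{\omega}}(k))$. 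Recognizing this recombination is what produces the $-\xi_{r}$ on the right-hand side.

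The computations are routine once the arguments are tracked correctly; the only real care needed is bookkeeping—matching which period each instance of (\ref{eq:Barnes zeta relation}) is shifting and keeping the $\omega_0$ versus $e^{-\pi i}\omega_0$ slots aligned so that the $\zeta_{r}$ corrections either cancel (part (1)) or recombine into $\xi_{r}$ (part (2)). I expect the main, though mild, obstacle to be precisely this argument-matching, together with verifying that \textbf{[SOC]} is inherited by the shifted variables so that the recurrence is applicable before passing to the analytic continuation in $s$.
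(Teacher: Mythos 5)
Your proposal is correct and takes essentially the same route as the paper: for (1) the paper rewrites $\xi_{r+1}$ via the equivalent expression in Lemma~\ref{prop:bilateral zeta1}(2) (itself a consequence of the Barnes recurrence (\ref{eq:Barnes zeta relation})) so that the shift $z\mapsto z+\omega_0$ is absorbed by $\omega_0+e^{-\pi i}\omega_0=0$, which is exactly the cancellation your two explicit applications of the recurrence produce, and for (2) the paper likewise applies (\ref{eq:Barnes zeta relation}) term-by-term to the two summands in (\ref{eq:def bilateral}) and recombines the corrections into $\xi_{r}(s,z\mid\omega_0;\widehat{{\boldsymbol{\omega}}}(k))$. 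Your additional check that \textbf{[SOC]} is inherited by the shifted variables is a welcome detail the paper leaves implicit.
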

\begin{proof}
(1) 
By (\ref{eq:def bilateral}) and Lemma \ref{prop:bilateral zeta1}, 
\begin{align}
\xi_{r+1}(s, z+\omega_0\mid \omega_0;\,{\boldsymbol{\omega}})
 & 
=\zeta_{r+1}(s, z+\omega_{0}\mid \omega_{0},\,{\boldsymbol{\omega}} )
+\zeta_{r+1}(s, (z+\omega_{0})+e^{-\pi i}\omega_{0}\mid e^{-\pi i}\omega_{0},\,{\boldsymbol{\omega}} ) \nonumber \\
{} & 
=\zeta_{r+1}(s, z+\omega_{0}\mid \omega_{0},\,{\boldsymbol{\omega}} )
+\zeta_{r+1}(s, z\mid e^{-\pi i}\omega_{0},\,{\boldsymbol{\omega}} )\nonumber \\
{} & 
=\xi_{r+1}(s, z\mid \omega_0;\,{\boldsymbol{\omega}}).\nonumber
\end{align}
(2) 
It is obvious from the definition and Lemma \ref{prop:Barnes zeta}.
\end{proof}
\begin{lem}
\label{prop:bilateral multiplication}
If $\alpha\in \mathbf{C}^{\ast}$ satisfies the conditions
\begin{align}
\label{eq:bilateral multiplication condition}
-\pi<& \arg(\alpha)+\arg(z)\leq \pi, \nonumber \\
0<& \arg(\alpha)+\arg(\omega_{0})\leq \pi,  \\
-\pi<& \arg(\alpha)+\arg(\omega_{j})\leq \pi\,\,(1\leq j\leq r), \nonumber
\end{align}
then the following equality holds.
\begin{equation}
\label{eq:bilateral multiplication}
\xi_{r+1}(s,\alpha{z}\mid \alpha{\omega_{0}};\alpha{{\boldsymbol{\omega}}})
=\alpha^{-s}\xi_{r+1}(s,z\mid \omega_{0};\,{\boldsymbol{\omega}}).
\end{equation}
\end{lem}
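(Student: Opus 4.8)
The plan is to reduce the identity to the Barnes multiplication formula of Lemma~\ref{prop:multiplication}, applied separately to each of the two Barnes summands that define the bilateral zeta function. Unfolding the left-hand side with the defining relation (\ref{eq:def bilateral}) gives
\begin{align}
\xi_{r+1}(s,\alpha z\mid \alpha\omega_0;\alpha{\boldsymbol{\omega}})
=\zeta_{r+1}(s,\alpha z+\alpha\omega_0\mid \alpha\omega_0,\alpha{\boldsymbol{\omega}})
+\zeta_{r+1}(s,\alpha z\mid -\alpha\omega_0,\alpha{\boldsymbol{\omega}}).\nonumber
\end{align}
First I would rewrite each summand as a genuine $\alpha$-scaling, using $\alpha z+\alpha\omega_0=\alpha(z+\omega_0)$ and $-\alpha\omega_0=\alpha(-\omega_0)$, so that the first term is $\zeta_{r+1}$ evaluated at $\alpha$ times the data of $\zeta_{r+1}(s,z+\omega_0\mid\omega_0,{\boldsymbol{\omega}})$ and the second at $\alpha$ times the data of $\zeta_{r+1}(s,z\mid-\omega_0,{\boldsymbol{\omega}})$.

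Next I would apply Lemma~\ref{prop:multiplication} to each term, extract the common factor $\alpha^{-s}$, and recombine by (\ref{eq:def bilateral}):
\begin{align}
\xi_{r+1}(s,\alpha z\mid \alpha\omega_0;\alpha{\boldsymbol{\omega}})
&=\alpha^{-s}\bigl(\zeta_{r+1}(s,z+\omega_0\mid\omega_0,{\boldsymbol{\omega}})
+\zeta_{r+1}(s,z\mid-\omega_0,{\boldsymbol{\omega}})\bigr)\nonumber\\
&=\alpha^{-s}\xi_{r+1}(s,z\mid\omega_0;{\boldsymbol{\omega}}).\nonumber
\end{align}
Because Lemma~\ref{prop:multiplication} is an identity of meromorphic functions in $s$, this computation is valid for every $s\in\mathbf{C}$; if one prefers, the chain can be run on the half-plane $\Re(s)>r+1$ of absolute convergence and then propagated to all of $\mathbf{C}$ by the analytic continuation in $s$ from Lemma~\ref{prop:bilateral zeta1}(3), the factor $\alpha^{-s}=e^{-s\log\alpha}$ being entire.

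The one genuinely delicate point is the verification that both invocations of Lemma~\ref{prop:multiplication} are legitimate, i.e.\ that the bound $-\pi<\arg(\alpha)+\arg(\,\cdot\,)\le\pi$ holds for the full list of arguments occurring in each summand. For the second summand the arguments are $z,-\omega_0,\omega_1,\dots,\omega_r$: the bounds for $z$ and the $\omega_j$ are exactly the hypotheses (\ref{eq:bilateral multiplication condition}), while for $-\omega_0$ the hypothesis $0<\arg(\alpha)+\arg(\omega_0)\le\pi$ yields $\arg(\alpha)+\arg(-\omega_0)=\arg(\alpha)+\arg(\omega_0)-\pi\in(-\pi,0]$, which is admissible. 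For the first summand the only argument not already controlled by (\ref{eq:bilateral multiplication condition}) is that of the shifted variable $z+\omega_0$; here I would invoke {\bf[{SOC}]}, which forces $\arg(z)$ to lie between $\arg(-\omega_0)$ and $\arg(\omega_0)$, so that $z$ and $\omega_0$ differ in argument by at most $\pi$ and $\arg(z+\omega_0)$ lies between $\arg(z)$ and $\arg(\omega_0)$. Adding $\arg(\alpha)$ and sandwiching between the two admissible endpoints $\arg(\alpha)+\arg(z)\in(-\pi,\pi]$ and $\arg(\alpha)+\arg(\omega_0)\in(0,\pi]$ then places $\arg(\alpha)+\arg(z+\omega_0)$ in $(-\pi,\pi]$ as required. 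This angle bookkeeping, rather than any analytic ingredient, is the true content of the proof; everything else is a direct substitution.
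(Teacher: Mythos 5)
Your proof is correct and takes essentially the same route as the paper's: unfold the left-hand side via (\ref{eq:def bilateral}) with $-\alpha\omega_0=\alpha e^{-\pi i}\omega_0$, apply the Barnes multiplication formula (\ref{eq:Barnes multiplication}) of Lemma \ref{prop:multiplication} to each summand, pull out $\alpha^{-s}$, and recombine. The only difference is that you verify explicitly the argument conditions for the shifted variable $z+\omega_0$ and for the period $e^{-\pi i}\omega_0$, a check the paper leaves implicit; your bookkeeping there is sound.
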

\begin{proof}
By (\ref{eq:Barnes multiplication}) of Lemma \ref{prop:multiplication}, we obtain 
\begin{align}
\xi_{r+1}(s,\alpha{z}\mid \alpha{\omega_{0}};\alpha{{\boldsymbol{\omega}}})
&=\zeta_{r+1}(s, \alpha(z+\omega_{0})\mid \alpha{\omega_{0}},\,\alpha{{\boldsymbol{\omega}}})
+\zeta_{r+1}(s, \alpha{z}\mid \alpha{e^{-\pi i}\omega_{0}},\,\alpha{{\boldsymbol{\omega}}})\nonumber \\
&=\alpha^{-s}(\zeta_{r+1}(s, z+\omega_{0}\mid \omega_{0},\,{\boldsymbol{\omega}})
+\zeta_{r+1}(s, z\mid e^{-\pi i}\omega_{0},\,{\boldsymbol{\omega}}))\nonumber \\
&=\alpha^{-s}\xi_{r+1}(s,z\mid \omega_{0};\,{\boldsymbol{\omega}}).\nonumber
\end{align}
\end{proof}
\begin{cor}
\label{cor:bilateral multiplication2}
One has
\begin{equation}
\xi_{r+1}(s,z\mid \omega_{0};\,{\boldsymbol{\omega}})
=\left(\frac{e^{{\pi}i}}{\omega_{0}}\right)^s
\xi_{r+1}(s,e^{{\pi}i}z_{0}\mid e^{{\pi}i};\,e^{{\pi}i}{\boldsymbol{\omega}}_{\,0}).
\end{equation}
\end{cor}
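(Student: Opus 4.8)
The plan is to obtain this as an immediate specialization of the bilateral multiplication formula, Lemma \ref{prop:bilateral multiplication}, with the scaling parameter chosen to be $\alpha=e^{\pi i}/\omega_0$. First I would record how the three pieces of data transform under this $\alpha$, using the definitions $z_0=z/\omega_0$, $\omega_{j0}=\omega_j/\omega_0$, and ${\boldsymbol{\omega}}_{\,0}=(\omega_{10},\cdots,\omega_{r0})$: one has $\alpha z=e^{\pi i}z/\omega_0=e^{\pi i}z_0$, $\alpha\omega_0=e^{\pi i}$, and $\alpha\omega_j=e^{\pi i}\omega_{j0}$, so that $\alpha{\boldsymbol{\omega}}=e^{\pi i}{\boldsymbol{\omega}}_{\,0}$. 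Consequently the left-hand side of \eqref{eq:bilateral multiplication} becomes $\xi_{r+1}(s,e^{\pi i}z_0\mid e^{\pi i};e^{\pi i}{\boldsymbol{\omega}}_{\,0})$, and the formula reads $\xi_{r+1}(s,e^{\pi i}z_0\mid e^{\pi i};e^{\pi i}{\boldsymbol{\omega}}_{\,0})=\alpha^{-s}\xi_{r+1}(s,z\mid \omega_0;{\boldsymbol{\omega}})$. Multiplying through by $\alpha^{s}=(e^{\pi i}/\omega_0)^{s}$ then yields exactly the asserted identity.

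The one step requiring genuine verification is that $\alpha=e^{\pi i}/\omega_0$ satisfies the hypotheses \eqref{eq:bilateral multiplication condition} of Lemma \ref{prop:bilateral multiplication}; this is where I expect the only real bookkeeping. Under the argument convention $-\pi<\arg c\le\pi$ and the standing assumption $0<\arg(\omega_0)\le\pi$, one computes $\arg(\alpha)=\pi-\arg(\omega_0)\in[0,\pi)$, which already lies in the admissible range. The middle condition is then immediate, since $\arg(\alpha)+\arg(\omega_0)=\pi$, so that $0<\pi\le\pi$ holds.

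For the remaining two conditions I would invoke the strong one-side condition {\bf[SOC]}, which in the present normalization $0<\arg(\omega_0)\le\pi$ reads $\arg(\omega_0)-\pi\le\arg(z)\le\arg(\omega_0)$ and $\arg(\omega_0)-\pi<\arg(\omega_j)<\arg(\omega_0)$. Adding $\arg(\alpha)=\pi-\arg(\omega_0)$ to these inequalities shifts the first into $0\le\arg(\alpha)+\arg(z)\le\pi$ and the second into $0<\arg(\alpha)+\arg(\omega_j)<\pi$, which are precisely the first and third conditions of \eqref{eq:bilateral multiplication condition}. Thus Lemma \ref{prop:bilateral multiplication} applies, and the corollary follows with no further computation; the proof is essentially a direct substitution once the argument estimates are in place.
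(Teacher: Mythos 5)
Your proposal is correct and follows exactly the paper's own route: the paper likewise proves the corollary by applying Lemma \ref{prop:bilateral multiplication} with $\alpha=e^{\pi i}/\omega_0$, noting that {\bf[SOC]} guarantees the hypotheses \eqref{eq:bilateral multiplication condition}. Your explicit bookkeeping with $\arg(\alpha)=\pi-\arg(\omega_0)$ simply fills in the argument estimates that the paper leaves implicit.
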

\begin{proof}
By the condition [SOC], $\alpha=\frac{e^{{\pi}i}}{\omega_{0}}$ satisfies 
(\ref{eq:bilateral multiplication condition}) in Lemma \ref{prop:bilateral multiplication}. 
Therefore, we have 
\begin{equation}
\xi_{r+1}(s,e^{{\pi}i}z_{0}\mid e^{{\pi}i};\,e^{{\pi}i}{\boldsymbol{\omega}}_{\,0})
=\left(\frac{e^{{\pi}i}}{\omega_{0}}\right)^{-s}\xi_{r+1}(s,z\mid \omega_{0};\,{\boldsymbol{\omega}}). \nonumber
\end{equation}
\end{proof}
\begin{lem}
\label{prop:basic formula}
If $z\in \mathfrak{H}$, for all $s\in\mathbf{C}$, one has 
\begin{equation}
\label{eq:basic formula}
\xi_{1}(s,z\mid e^{\pi i})
=\frac{e^{-\frac{\pi}{2}is}(2\pi)^s}{\Gamma(s)}
\sum _{n=1}^{\infty }
n^{s-1}e^{2{\pi}inz}.
\end{equation} 
\end{lem}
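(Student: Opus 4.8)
The plan is to show that, for $z\in\mathfrak H$, the function $\xi_1(s,z\mid e^{\pi i})$ is nothing but the two-sided Eisenstein sum $f(z):=\sum_{n\in\mathbf Z}(z+n)^{-s}$, and then to read off its Fourier expansion. First I would unfold the definition: since $r=0$ and $e^{-\pi i}\omega_0=1$, formula (\ref{eq:def bilateral}) gives $\xi_1(s,z\mid e^{\pi i})=\zeta_1(s,z+e^{\pi i}\mid e^{\pi i})+\zeta_1(s,z\mid 1)$. Writing out the two Barnes series and using that $e^{\pi i}$ has modulus $1$ and argument $\pi$, the first series becomes $\sum_{n\ge1}(z-n)^{-s}$ and the second $\sum_{m\ge0}(z+m)^{-s}$. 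Because $\Im(z)>0$, every base point $z+n$ ($n\in\mathbf Z$) lies in $\mathfrak H$, so its principal argument stays in $(0,\pi)$ and the two branches match; hence the two series glue into the single bilateral sum $f(z)=\sum_{n\in\mathbf Z}(z+n)^{-s}$.

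Next I would reduce to the range $\Re(s)>1$. Both sides are entire in $s$: the left side by the analytic continuation in Lemma~\ref{prop:bilateral zeta1}(3), and the right side because $|e^{2\pi inz}|=e^{-2\pi n\Im(z)}$ decays geometrically while $1/\Gamma(s)$ is entire. By the identity theorem it therefore suffices to prove the formula for $\Re(s)>1$, where $f$ converges absolutely. On this domain $f$ is holomorphic and $1$-periodic in $z$ (the shift $z\mapsto z+1$ merely reindexes the sum) and $f(z)\to0$ as $\Im(z)\to\infty$; consequently $f$ admits a Fourier expansion $f(z)=\sum_{k\ge1}c_k e^{2\pi ikz}$ with no nonpositive frequencies, where $c_k=\int_{iy}^{1+iy}f(z)e^{-2\pi ikz}\,dz$ for any fixed $y>0$.

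It remains to evaluate $c_k$. Interchanging summation and integration (legitimate for $\Re(s)>1$), substituting $w=z+n$ in the $n$-th term, and using $e^{2\pi ikn}=1$, the translates glue along the horizontal line $\Im(w)=y$ to give $c_k=\int_{\Im(w)=y}w^{-s}e^{-2\pi ikw}\,dw$. For $k\le0$ one lets $y\to\infty$ to see $c_k=0$; for $k\ge1$ the change of variables $t=2\pi i k w$ turns this into $c_k=(2\pi ik)^{s-1}\int_{\mathcal C}t^{-s}e^{-t}\,dt$, where $\mathcal C$ is the upward vertical line $\Re(t)=-2\pi ky<0$. Writing $t=-u$ and deforming the resulting line $\Re(u)=2\pi ky>0$ onto the Hankel contour $\mathcal H$ (the connecting arcs vanish for $\Re(s)>0$) yields $\int_{\mathcal C}t^{-s}e^{-t}\,dt=e^{-\pi is}\cdot 2\pi i/\Gamma(s)$. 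Collecting the magnitude $(2\pi k)^{s-1}\cdot2\pi=(2\pi)^s k^{s-1}$ and the accumulated phase $e^{i\pi(s-1)/2}\,e^{i\pi/2}\,e^{-\pi is}=e^{-\pi is/2}$ gives $c_k=\dfrac{e^{-\pi is/2}(2\pi)^s}{\Gamma(s)}k^{s-1}$, which is exactly (\ref{eq:basic formula}). The only delicate points are the bookkeeping of the branch of $(\cdot)^{-s}$ that produces the phase $e^{-\pi is/2}$, and justifying the Hankel deformation together with the sum–integral interchange; everything else is routine.
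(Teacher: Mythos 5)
Your proposal is correct, and it diverges from the paper at exactly one point: the paper's proof is three lines long — it performs the same unfolding of (\ref{eq:def bilateral}) into $\zeta_{1}(s,z+e^{\pi i}\mid e^{\pi i})+\zeta_{1}(s,z\mid 1)=\sum_{n\in\mathbf{Z}}(z+n)^{-s}$ that you do, and then simply cites Lipschitz's summation formula from \cite{AAR} for the resulting Fourier expansion, whereas you prove that formula from scratch. Your derivation (periodicity, vanishing of the nonpositive Fourier coefficients as $y\to\infty$, gluing the translates into a line integral, and evaluating it by the Hankel representation of $1/\Gamma(s)$) is the classical contour proof of Lipschitz summation — essentially the argument of Knopp and Robins \cite{KR}, which the paper itself invokes elsewhere in spirit — so what you buy is self-containedness, at the cost of the branch bookkeeping you rightly flag: along your contour $\mathcal{C}$ the inherited branch of $t^{-s}$ has $\arg t\in(\pi/2,3\pi/2)$, not the principal one, and your substitution $t=-u$ with the extra phase $e^{-\pi i s}$ handles this correctly, yielding the phase $e^{i\pi(s-1)/2}e^{i\pi/2}e^{-\pi is}=e^{-\pi is/2}$ of (\ref{eq:basic formula}). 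One remark on your reduction step: entirety of the left-hand side is, in the paper's logical order, only established \emph{after} this lemma (Corollary 4.8 follows from Theorem 4.7), but no circularity arises in your argument, since the a priori meromorphic continuation from Lemma~\ref{prop:bilateral zeta1}, agreement with the entire right-hand side on $\Re(s)>1$, and the identity theorem already force the identity on all of $\mathbf{C}$.
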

\begin{proof}
We may assume $\Re(s)>1$.
Since $z\in \mathfrak{H}$, we observe
\begin{align}
\xi_{1}(s,z\mid e^{\pi i})
=&\zeta_{1}(s, z+e^{\pi i}\mid e^{\pi i})
+\zeta_{1}(s, z\mid 1) \nonumber \\
=&\sum_{n=0}^{\infty }\frac{1}{(n{e^{\pi i}}+e^{\pi i}+z)^s}
+\sum_{n=0}^{\infty }\frac{1}{(n+z)^s}
=\frac{e^{-\frac{\pi}{2}is}(2\pi)^s}{\Gamma(s)}
\sum _{n=1}^{\infty }
n^{s-1}e^{2{\pi}inz},\nonumber
\end{align}
where we have used the Lipschitz's formula (\cite{AAR}) for the third equality.
\end{proof}
\begin{thm}
\label{thm:multiple basic formula}
If $z,\omega_{1},\cdots,\omega_{r}\in \mathfrak{H}$, for all $s\in\mathbf{C}$, one has
\begin{equation}
\label{eq:multiple basic formula}
\xi_{r+1}(s,z\mid e^{\pi i};\,{\boldsymbol{\omega}})
=\frac{e^{-\frac{\pi}{2}is}(2\pi)^s}{\Gamma(s)}
\sum _{n=1}^{\infty }
\frac{n^{s-1}e^{2{\pi}inz}}
{(1-e^{2{\pi}in\omega_{1}}) \cdots(1-e^{2{\pi}in\omega_{r}})}.
\end{equation}
\end{thm}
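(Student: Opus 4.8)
The plan is to prove the identity first on the half-plane $\Re(s)>r+1$, where all the series converge absolutely, and then extend it to every $s\in\mathbf{C}$ by analytic continuation. This reduction is painless: by Lemma \ref{prop:bilateral zeta1}(3) the left-hand side $\xi_{r+1}(s,z\mid e^{\pi i};\,{\boldsymbol{\omega}})$ is entire in $s$, and for fixed $z,\omega_1,\dots,\omega_r\in\mathfrak{H}$ the right-hand side is entire in $s$ as well, since $|e^{2{\pi}inz}|=e^{-2\pi n\Im(z)}$ decays exponentially while $\prod_{j=1}^{r}(1-e^{2{\pi}in\omega_j})\to1$, so the Dirichlet-type series converges locally uniformly in $s$ (and $1/\Gamma(s)$ is entire). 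Two entire functions of $s$ that agree on $\Re(s)>r+1$ agree everywhere.

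First I would unfold the definition. With $\omega_0=e^{\pi i}$ one has $e^{-\pi i}\omega_0=1$, so Definition \ref{dfn:bilateral dfn} gives
\[
\xi_{r+1}(s,z\mid e^{\pi i};\,{\boldsymbol{\omega}})
=\zeta_{r+1}(s,z+e^{\pi i}\mid e^{\pi i},\,{\boldsymbol{\omega}})+\zeta_{r+1}(s,z\mid 1,\,{\boldsymbol{\omega}}).
\]
Writing out both Barnes series, the first contributes the base points $z+\ell+\sum_j m_j\omega_j$ with $\ell=-(m_0+1)\le-1$ (using $(m_0+1)e^{\pi i}=-(m_0+1)$), and the second those with $\ell=m_0\ge0$; together the integer shifts $\ell$ range over all of $\mathbf{Z}$, so
\[
\xi_{r+1}(s,z\mid e^{\pi i};\,{\boldsymbol{\omega}})
=\sum_{\ell\in\mathbf{Z}}\ \sum_{m_1,\dots,m_r=0}^{\infty}
\frac{1}{(z+\ell+m_1\omega_1+\cdots+m_r\omega_r)^s}.
\]
Because $z,\omega_1,\dots,\omega_r\in\mathfrak{H}$, every base point $z+\ell+\sum_j m_j\omega_j$ lies in the open upper half-plane, so all powers are taken with the same principal branch and, for $\Re(s)>r+1$, the double series is absolutely convergent. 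This is the one spot where I would verify the hypotheses carefully: it is precisely the branch-consistency of the folded bilateral sum over $\mathbf{Z}\times\mathbf{N}_0^{\,r}$ that makes the subsequent rearrangements legitimate, and it is the main (though routine) obstacle.

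Next I would freeze ${\boldsymbol{m}}=(m_1,\dots,m_r)$ and put $w:=z+m_1\omega_1+\cdots+m_r\omega_r\in\mathfrak{H}$. The inner sum over $\ell$ is then exactly $\xi_1(s,w\mid e^{\pi i})=\sum_{\ell\in\mathbf{Z}}(w+\ell)^{-s}$, to which Lemma \ref{prop:basic formula} applies:
\[
\sum_{\ell\in\mathbf{Z}}\frac{1}{(w+\ell)^s}
=\frac{e^{-\frac{\pi}{2}is}(2\pi)^s}{\Gamma(s)}\sum_{n=1}^{\infty}n^{s-1}e^{2{\pi}inw}.
\]
Substituting $w=z+\sum_j m_j\omega_j$ and summing over ${\boldsymbol{m}}$, absolute convergence lets me interchange the sum over ${\boldsymbol{m}}$ with the sum over $n$; for each fixed $n\ge1$ the sum factors as $\prod_{j=1}^{r}\sum_{m_j\ge0}e^{2{\pi}inm_j\omega_j}=\prod_{j=1}^{r}(1-e^{2{\pi}in\omega_j})^{-1}$, the geometric series converging since $|e^{2{\pi}in\omega_j}|=e^{-2\pi n\Im(\omega_j)}<1$. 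This yields the claimed Fourier expansion on $\Re(s)>r+1$, and the analytic-continuation remark above completes the proof. Everything beyond the careful bookkeeping of the bilateral lattice sum reduces to Lemma \ref{prop:basic formula} and one geometric series.
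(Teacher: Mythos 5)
Your proposal is correct and follows essentially the same route as the paper: the paper likewise reduces to $\Re(s)>r+1$, writes $\xi_{r+1}(s,z\mid e^{\pi i};\,{\boldsymbol{\omega}})=\sum_{m_1,\dots,m_r\ge 0}\xi_{1}(s,z+m_1\omega_1+\cdots+m_r\omega_r\mid e^{\pi i})$, applies Lemma \ref{prop:basic formula} to each inner term, and interchanges sums to factor the geometric series, with analytic continuation handling general $s$. Your only additions — spelling out the bilateral sum over $\ell\in\mathbf{Z}$ with the branch-consistency check, and making the entirety-of-both-sides argument explicit — are faithful elaborations of steps the paper leaves implicit.
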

\begin{proof}
It is enough to show the assertion when $\Re(s)>r+1$. 
Since the series for $\xi_{r+1}(s,z\mid e^{\pi i};\,{\boldsymbol{\omega}})$ converges absolutely, we observe 
\begin{align}
\xi_{r+1}(s,z\mid e^{\pi i};\,{\boldsymbol{\omega}}) =&
\sum_{m_1,\cdots,m_{r}=0}^{\infty }
\xi_{1}(s,z+m_{1}\omega_{1}+\cdots+m_{r}\omega_{r}\mid e^{\pi i}) \nonumber \\
=&\sum_{m_1,\cdots,m_{r}=0}^{\infty }
\frac{e^{-\frac{\pi}{2}is}(2\pi)^s}{\Gamma(s)}
\sum _{n=1}^{\infty }
n^{s-1}e^{2{\pi}in(z+m_{1}\omega_{1}+\cdots+m_{r}\omega_{r})} \nonumber \\
=&\frac{e^{-\frac{\pi}{2}is}(2\pi)^s}{\Gamma(s)}
\sum _{n=1}^{\infty }
\frac{n^{s-1}e^{2{\pi}inz}}
{(1-e^{2{\pi}in\omega_{1}}) \cdots(1-e^{2{\pi}in\omega_{r}})}. \nonumber
\end{align}
Since $z,\omega_{1},\cdots,\omega_{r}\in \mathfrak{H}$, the second equality follows from (\ref{eq:basic formula}) immediately.
\end{proof}
It is easy to show the following corollary from Lemma \ref{cor:bilateral multiplication2} and Theorem \ref{thm:multiple basic formula}.
\begin{cor}
\label{cor:bilateral zero}
{\rm(1)}
The bilateral zeta function $\xi_{r+1}(s,z\mid \omega_{0};\,{\boldsymbol{\omega}})$ is an entire function in $s \in \mathbf{C}$.\\
{\rm(2)}
For all $m \in \mathbf{N}$, one has
\begin{equation}
\label{eq:bilateral zero}
\xi_{r+1}(1-m,z\mid \omega_{0};\,{\boldsymbol{\omega}})=0.
\end{equation}
\end{cor}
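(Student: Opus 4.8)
The plan is to read off both statements from the closed Fourier expansion, after using the multiplication formula to normalize the first period to $e^{\pi i}$. First I would apply Corollary~\ref{cor:bilateral multiplication2} to write
\[
\xi_{r+1}(s,z\mid \omega_{0};\,{\boldsymbol{\omega}})
=\left(\frac{e^{\pi i}}{\omega_{0}}\right)^{s}
\xi_{r+1}(s,e^{\pi i}z_{0}\mid e^{\pi i};\,e^{\pi i}{\boldsymbol{\omega}}_{\,0}).
\]
This is legitimate because the condition {\bf[SOC]} together with $0<\arg(\omega_{0})\le\pi$ forces every component of $e^{\pi i}{\boldsymbol{\omega}}_{\,0}$ into $\mathfrak{H}$ (the strict inequalities $\arg(\omega_{0})-\pi<\arg(\omega_{j})<\arg(\omega_{0})$ rotate to $\arg(e^{\pi i}\omega_{j0})\in(0,\pi)$), which is exactly the hypothesis needed to invoke Theorem~\ref{thm:multiple basic formula}. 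Substituting (\ref{eq:multiple basic formula}) then expresses $\xi_{r+1}(s,z\mid \omega_{0};\,{\boldsymbol{\omega}})$ as a product of the factors $(e^{\pi i}/\omega_{0})^{s}$, $e^{-\frac{\pi}{2}is}$, $(2\pi)^{s}$, $1/\Gamma(s)$, and the Dirichlet-type series
\[
\Phi(s):=\sum_{n=1}^{\infty}
\frac{n^{s-1}e^{2\pi i n w}}{(1-e^{2\pi i n \eta_{1}})\cdots(1-e^{2\pi i n \eta_{r}})},
\]
where $w:=e^{\pi i}z_{0}$ and $\eta_{1},\dots,\eta_{r}$ are the components of $e^{\pi i}{\boldsymbol{\omega}}_{\,0}$.

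The analytic heart of part (1) is to check that $\Phi$ is entire. Since $w,\eta_{1},\dots,\eta_{r}\in\mathfrak{H}$, the numerator factor $|e^{2\pi i n w}|=e^{-2\pi n\Im(w)}$ decays exponentially in $n$, while each $|1-e^{2\pi i n\eta_{j}}|$ is bounded below by the positive constant $1-e^{-2\pi\Im(\eta_{j})}$ uniformly in $n\ge 1$; hence the $n$-th term is dominated by $C\,n^{\Re(s)-1}e^{-2\pi n\Im(w)}$. This estimate shows that the series converges absolutely and uniformly on every half-plane $\{\Re(s)\le R\}$, so by the Weierstrass theorem $\Phi$ is holomorphic on all of $\mathbf{C}$. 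As $(e^{\pi i}/\omega_{0})^{s}$, $e^{-\frac{\pi}{2}is}$, $(2\pi)^{s}$, and $1/\Gamma(s)$ are entire, the whole product is entire, which proves (1). This sharpens Lemma~\ref{prop:bilateral zeta1}(3), where only continuation to $\mathbf{C}$ was obtained, to the genuine absence of poles: the potential poles of the constituent Barnes zeta functions at $s=1,\dots,r+1$ are absorbed by the factor $1/\Gamma(s)$.

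For part (2), I would simply specialize to $s=1-m$ with $m\in\mathbf{N}$. The factors $(e^{\pi i}/\omega_{0})^{s}$, $e^{-\frac{\pi}{2}is}$, $(2\pi)^{s}$ are finite there, and $\Phi(1-m)$ is a finite value by the convergence just established, whereas the single remaining factor $1/\Gamma(s)$ vanishes at $s=1-m$ because $\Gamma$ has a pole at each nonpositive integer. Hence the product is $0$, giving $\xi_{r+1}(1-m,z\mid \omega_{0};\,{\boldsymbol{\omega}})=0$.

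I expect the only real obstacle to be the bookkeeping confirming that $e^{\pi i}z_{0}$ and $e^{\pi i}{\boldsymbol{\omega}}_{\,0}$ land in $\mathfrak{H}$, so that Theorem~\ref{thm:multiple basic formula} applies. For the $\omega_{j}$ this is immediate from the strict inequalities in {\bf[SOC]}, but for $z$ the condition allows $\arg(z)$ at the endpoints, where $\arg(e^{\pi i}z_{0})$ can equal $0$ or $\pi$; these boundary values must be handled by continuity in $z$ (or by proving the identity on the open region and continuing). Everything else reduces to the domination estimate above.
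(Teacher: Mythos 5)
Your proposal follows exactly the route the paper intends: the paper gives no written proof, saying only that the corollary ``is easy to show'' from Corollary~\ref{cor:bilateral multiplication2} and Theorem~\ref{thm:multiple basic formula}, and your argument---normalize via the multiplication formula, verify the Fourier series $\Phi(s)$ is entire by the exponential-decay estimate, and read off the vanishing at $s=1-m$ from the $1/\Gamma(s)$ factor---is precisely that argument with the details filled in. Your closing caveat about $\arg(e^{\pi i}z_{0})$ possibly hitting $0$ or $\pi$ under {\bf[SOC]}, to be handled by continuity or analytic continuation in $z$ (as permitted by Lemma~\ref{prop:bilateral zeta1}(3)), is a genuine gap in the paper's terse claim that you correctly identify and patch, not a deviation from its method.
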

\begin{cor}
\label{cor:q-fact}
If $z,\omega_{1},\cdots,\omega_{r}\in \mathfrak{H}$, for all $m\in \mathbf{N}$, one has
\begin{equation}
\frac{\partial \xi_{r+1}}{\partial s}
(1-m,z\mid e^{\pi i};\,{\boldsymbol{\omega}})
=\frac{(m-1)!}{(2{\pi}i)^{m-1}}
\sum _{n=1}^{\infty }
\frac{e^{2{\pi}inz}}
{n^m(1-e^{2{\pi}in\omega_{1}}) \cdots(1-e^{2{\pi}in\omega_{r}})}.
\end{equation}
In particular, 
\begin{equation}
\label{eq:bilateral q-fact}
\exp\left(-\frac{\partial \xi_{r+1}}{\partial s}
(0,z\mid e^{\pi i};\,{\boldsymbol{\omega}})\right)
=(x;\,{\boldsymbol{q}})_{r,\infty }.
\end{equation} 
\end{cor}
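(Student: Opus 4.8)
The plan is to differentiate the closed-form Fourier expansion of Theorem~\ref{thm:multiple basic formula} in $s$ and evaluate at $s=1-m$. Writing $\xi_{r+1}(s,z\mid e^{\pi i};\,{\boldsymbol{\omega}})=\tfrac{1}{\Gamma(s)}\,g(s)$ with $g(s):=e^{-\frac{\pi}{2}is}(2\pi)^{s}\sum_{n=1}^{\infty}\frac{n^{s-1}e^{2\pi inz}}{\prod_{k=1}^{r}(1-e^{2\pi in\omega_k})}$, the product rule gives $\partial_{s}\xi_{r+1}=g'(s)/\Gamma(s)+g(s)\,\frac{d}{ds}\!\left[\tfrac{1}{\Gamma(s)}\right]$. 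Since $1/\Gamma(s)$ vanishes at every non-positive integer (this is exactly why $\xi_{r+1}(1-m,\ldots)=0$ in Corollary~\ref{cor:bilateral zero}), at $s=1-m$ the first term drops out and only the second survives.

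First I would record $\frac{d}{ds}\!\left[1/\Gamma(s)\right]\big|_{s=1-m}=(-1)^{m-1}(m-1)!$, which holds because $\Gamma$ has a simple pole at $s=1-m$ with residue $(-1)^{m-1}/(m-1)!$, so $1/\Gamma$ has there a simple zero with that derivative. Evaluating $g(1-m)$ then reproduces the series $\sum_{n\ge1}e^{2\pi inz}/\bigl(n^{m}\prod_{k}(1-e^{2\pi in\omega_k})\bigr)$ times the scalar $e^{-\frac{\pi}{2}i(1-m)}(2\pi)^{1-m}$. Multiplying by $(-1)^{m-1}(m-1)!$ and simplifying the phase through $e^{-\frac{\pi}{2}i(1-m)}=i^{\,m-1}$, so that $e^{-\frac{\pi}{2}i(1-m)}(-1)^{m-1}=(-i)^{m-1}=i^{\,1-m}$, collapses every phase factor and power of $2\pi$ into $(2\pi i)^{1-m}=1/(2\pi i)^{m-1}$. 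The result is exactly $\frac{(m-1)!}{(2\pi i)^{m-1}}\sum_{n\ge1}e^{2\pi inz}/\bigl(n^{m}\prod_{k}(1-e^{2\pi in\omega_k})\bigr)$. This phase bookkeeping is the only delicate point; the analytic content is light, since we merely differentiate a known closed form.

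For the product identity I would specialize to $m=1$, where $\frac{(m-1)!}{(2\pi i)^{m-1}}=1$ and hence $\partial_{s}\xi_{r+1}(0,z\mid e^{\pi i};\,{\boldsymbol{\omega}})=\sum_{n\ge1}e^{2\pi inz}/\bigl(n\prod_{k}(1-e^{2\pi in\omega_k})\bigr)$. Taking the logarithm of the defining product in Definition~\ref{def:Def1.1}(1), expanding each factor by $\log(1-w)=-\sum_{n\ge1}w^{n}/n$ (legitimate because $z,\omega_{k}\in\mathfrak{H}$ force $|x|,|q_{k}|<1$, so every summand has modulus below $1$), and interchanging the order of summation yields $\log(x;\,{\boldsymbol{q}})_{r,\infty}=-\sum_{n\ge1}\frac{x^{n}}{n}\prod_{k=1}^{r}\sum_{m_{k}\ge0}q_{k}^{nm_{k}}$. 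Summing the geometric series $\sum_{m_{k}\ge0}q_{k}^{nm_{k}}=1/(1-e^{2\pi in\omega_k})$ and using $x^{n}=e^{2\pi inz}$ identifies $-\log(x;\,{\boldsymbol{q}})_{r,\infty}$ with the $m=1$ series, which is the claim after exponentiating. The absolute convergence justifying the rearrangement is again supplied by the strict inequalities $|x|,|q_{k}|<1$.
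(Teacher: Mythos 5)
Your proof is correct and follows essentially the same route as the paper: the paper computes $\partial_s\xi_{r+1}(1-m)$ as the difference quotient $\lim_{s\to 0}\xi_{r+1}(s+1-m)/s$ using the vanishing $\xi_{r+1}(1-m)=0$ together with Theorem~\ref{thm:multiple basic formula}, which is exactly your product-rule computation with $(1/\Gamma)'(1-m)=(-1)^{m-1}(m-1)!$ in different packaging, and the phase bookkeeping $(-1)^{m-1}i^{m-1}(2\pi)^{1-m}=(2\pi i)^{1-m}$ matches. For \eqref{eq:bilateral q-fact} the paper likewise expands each $(1-e^{2\pi in\omega_k})^{-1}$ as a geometric series and uses $\sum_{n\ge1}w^n/n=-\log(1-w)$, i.e., your logarithm argument read in the opposite direction, with the same absolute-convergence justification from $|x|,|q_k|<1$.
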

\begin{proof}
If $z,\omega_{1},\cdots,\omega_{r}\in \mathfrak{H}$, then
\begin{align}
\frac{\partial \xi_{r+1}}{\partial s}
(1-m,z\mid e^{\pi i};\,{\boldsymbol{\omega}})
&=\lim_{s\to 0}\frac{\xi_{r+1}(s+1-m,z\mid e^{\pi i};\,{\boldsymbol{\omega}})-\xi_{r+1}(1-m,z\mid e^{\pi i};\,{\boldsymbol{\omega}})}{s}\nonumber \\
&=\lim_{s\to 0}\frac{\xi_{r+1}(s+1-m,z\mid e^{\pi i};\,{\boldsymbol{\omega}})}{s} \nonumber \\
&=\frac{(m-1)!}{(2{\pi}i)^{m-1}}
\sum _{n=1}^{\infty }
\frac{e^{2{\pi}inz}}
{n^{m}(1-e^{2{\pi}in\omega_{1}}) \cdots(1-e^{2{\pi}in\omega_{r}})}.\nonumber
\end{align}
The second equality follows from (\ref{eq:bilateral zero}) and the third one from (\ref{eq:multiple basic formula}).
Moreover, by analytic continuation, it is enough to show the assertion (\ref{eq:bilateral q-fact}) when $z \in \mathfrak{H}$.
\begin{align}
\exp\left(-\frac{\partial \xi_{r+1}}{\partial s}
(0,z\mid e^{\pi i};\,{\boldsymbol{\omega}})\right)
&=\exp\left(-\sum _{n=1}^{\infty }
\frac{e^{2{\pi}inz}}
{n(1-e^{2{\pi}in\omega_{1}}) \cdots(1-e^{2{\pi}in\omega_{r}})}\right)\nonumber \\
&=\exp\left(-\sum _{n=1}^{\infty }\sum_{m_1,\cdots,m_{r}=0}^{\infty }
\frac{e^{2{\pi}in(z+m_{1}\omega_{1}+\cdots+m_{r}\omega_{r})}}{n}\right) \nonumber \\
&=\exp\left(\log \prod _{m_1,\cdots,m_{r}=0}^{\infty }
{(1-e^{2{\pi}i(m_{1}\omega_{1}+\cdots+m_{r}\omega_{r}+z)})}\right)\nonumber \\
&=(x;\,{\boldsymbol{q}})_{r,\infty }.\nonumber
\end{align}
This proves (\ref{eq:bilateral q-fact}).
\end{proof}
\begin{cor}
\label{cor:Friedman, Ruijsenaars}
{\rm(\cite{FR})}
If $\omega_{1},\cdots,\omega_{r}\in \mathfrak{H}$, then
\begin{equation}
\label{eq:Friedman-Ruijsenaars}
\frac{1}{\Gamma_{r+1}(z\mid 1,\,{\boldsymbol{\omega}})\Gamma_{r+1}(1-z\mid 1,\,e^{-{\pi}i}{\boldsymbol{\omega}})}
=\exp\left\{\frac{(-1)^{r+1}{\pi}i}{(r+1)!}B_{r+1,r+1}(z\mid 1,\,{\boldsymbol{\omega}})\right\}(x;\,{\boldsymbol{q}})_{r,\infty }.
\end{equation}
Here $\Gamma_{r}(z\mid {\boldsymbol{\omega}})$ is the multiple gamma function defined by
\begin{equation}
{\Gamma_{r}(z\mid {\boldsymbol{\omega}})}:=\exp \left(\frac{\partial \zeta_{r}}{\partial s}(0,z\mid 
{\boldsymbol{\omega}})\right).
\end{equation}
\end{cor}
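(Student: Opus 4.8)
The plan is to take the logarithm of the left-hand side and read it off from the product formula (\ref{eq:bilateral q-fact}) of Corollary \ref{cor:q-fact}, which already identifies $-\tfrac{\partial\xi_{r+1}}{\partial s}(0,z\mid e^{\pi i};\,{\boldsymbol{\omega}})$ with $\log(x;\,{\boldsymbol{q}})_{r,\infty}$. By the definition of the multiple gamma function, the logarithm of the left-hand side is
\[
-\frac{\partial\zeta_{r+1}}{\partial s}(0,z\mid 1,\,{\boldsymbol{\omega}})
-\frac{\partial\zeta_{r+1}}{\partial s}(0,1-z\mid 1,\,e^{-\pi i}{\boldsymbol{\omega}}),
\]
so the task reduces to expressing these two Barnes contributions through the bilateral zeta function at $\omega_0=e^{\pi i}$.

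First I would unfold the definition (\ref{eq:def bilateral}) at $\omega_0=e^{\pi i}$, which gives $\xi_{r+1}(s,z\mid e^{\pi i};\,{\boldsymbol{\omega}})=\zeta_{r+1}(s,z+e^{\pi i}\mid e^{\pi i},\,{\boldsymbol{\omega}})+\zeta_{r+1}(s,z\mid 1,\,{\boldsymbol{\omega}})$; after differentiation the second summand contributes $\log\Gamma_{r+1}(z\mid 1,\,{\boldsymbol{\omega}})$. For the first summand I would apply the multiplication formula (\ref{eq:Barnes multiplication}) of Lemma \ref{prop:multiplication} with $\alpha=e^{\pi i}$, writing $z+e^{\pi i}=e^{\pi i}(1-z)$ and $(e^{\pi i},\,{\boldsymbol{\omega}})=e^{\pi i}(1,\,e^{-\pi i}{\boldsymbol{\omega}})$, to obtain
\[
\zeta_{r+1}(s,z+e^{\pi i}\mid e^{\pi i},\,{\boldsymbol{\omega}})
=e^{-\pi i s}\,\zeta_{r+1}(s,1-z\mid 1,\,e^{-\pi i}{\boldsymbol{\omega}}).
\]
The delicate point is verifying, for $z,\omega_j\in\mathfrak{H}$, the argument conditions of Lemma \ref{prop:multiplication} with $\alpha=e^{\pi i}$; this is where the branch conventions of Section 2 must be tracked carefully, and I expect this bookkeeping to be the main obstacle.

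Differentiating this identity at $s=0$ by the product rule produces two terms: $\log\Gamma_{r+1}(1-z\mid 1,\,e^{-\pi i}{\boldsymbol{\omega}})$ together with a residual $-\pi i\,\zeta_{r+1}(0,1-z\mid 1,\,e^{-\pi i}{\boldsymbol{\omega}})$ coming from the factor $e^{-\pi i s}$. Assembling these with (\ref{eq:bilateral q-fact}) and cancelling the two $\log\Gamma_{r+1}$ terms against the logarithm of the left-hand side, I would find
\[
\log(x;\,{\boldsymbol{q}})_{r,\infty}
=\pi i\,\zeta_{r+1}(0,1-z\mid 1,\,e^{-\pi i}{\boldsymbol{\omega}})
-\frac{\partial\zeta_{r+1}}{\partial s}(0,z\mid 1,\,{\boldsymbol{\omega}})
-\frac{\partial\zeta_{r+1}}{\partial s}(0,1-z\mid 1,\,e^{-\pi i}{\boldsymbol{\omega}}),
\]
so that the left-hand side equals $(x;\,{\boldsymbol{q}})_{r,\infty}\,\exp\bigl(-\pi i\,\zeta_{r+1}(0,1-z\mid 1,\,e^{-\pi i}{\boldsymbol{\omega}})\bigr)$.

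Finally I would evaluate the special value by (\ref{eq:BarnesSV}) at $m=1$, namely $\zeta_{r+1}(0,w\mid{\boldsymbol{\eta}})=\tfrac{(-1)^{r+1}}{(r+1)!}B_{r+1,r+1}(w\mid{\boldsymbol{\eta}})$, and then simplify the Bernoulli polynomial. Since $e^{-\pi i}\omega_j=-\omega_j$ as complex numbers and the Bernoulli polynomial is insensitive to the branch, I would flip the $r$ signs one at a time via (\ref{eq:multiple Brnoulli4}), picking up a factor $(-1)^{r}$ and shifting the argument by $\vert {\boldsymbol{\omega}}\vert^{+}$, and then apply the reflection identity (\ref{eq:multiple Brnoulli2}) with ${\boldsymbol{\nu}}=(1,\,{\boldsymbol{\omega}})$, for which $\vert {\boldsymbol{\nu}}\vert^{+}-z=1-z+\vert {\boldsymbol{\omega}}\vert^{+}$. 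This yields $B_{r+1,r+1}(1-z\mid 1,\,e^{-\pi i}{\boldsymbol{\omega}})=-B_{r+1,r+1}(z\mid 1,\,{\boldsymbol{\omega}})$, whereupon the exponent becomes exactly $\tfrac{(-1)^{r+1}\pi i}{(r+1)!}B_{r+1,r+1}(z\mid 1,\,{\boldsymbol{\omega}})$, matching the claimed right-hand side. I would argue this first for $z\in\mathfrak{H}$ and then extend to general $z$ in the stated range by analytic continuation, both sides being multivalued holomorphic in $z$.
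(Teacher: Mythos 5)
Your proposal is correct and follows essentially the same route as the paper: decompose $\xi_{r+1}(s,z\mid e^{\pi i};\,{\boldsymbol{\omega}})$ via (\ref{eq:def bilateral}), pull out $\alpha=e^{\pi i}$ with Lemma \ref{prop:multiplication} (the paper writes the shifted variable as $e^{-\pi i}z+1$ rather than $1-z$, which is the same point after tracking branches), differentiate at $s=0$ against (\ref{eq:bilateral q-fact}), evaluate the residual term by (\ref{eq:BarnesSV}) with (\ref{eq:multiple Brnoulli2}) and (\ref{eq:multiple Brnoulli4}), and finish by analytic continuation from $z\in\mathfrak{H}$. The argument-condition bookkeeping you flag does check out ($\arg(e^{-\pi i}z+1)\in(-\pi,0)$ for $z\in\mathfrak{H}$, and $\pi+\arg(e^{-\pi i}\omega_j)=\arg(\omega_j)\in(0,\pi)$), exactly as the paper implicitly uses.
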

\begin{proof}
Suppose $\omega_{1},\cdots,\omega_{r}\in \mathfrak{H}$.
Since $\Gamma_{r}(z\mid {\boldsymbol{\omega}})^{-1}$ and $\Gamma_{r+1}(1-z\mid 1,\,e^{-{\pi}i}{\boldsymbol{\omega}})^{-1}$ are continued holomorphically to the whole $z$-plane, it is enough to show the assertion when $z \in \mathfrak{H}$.
By Corollary \ref{cor:q-fact}, if $z,\omega_{1},\cdots,\omega_{r}\in \mathfrak{H}$,
\begin{equation}
(x;\,{\boldsymbol{q}})_{r,\infty }
=\exp\left(-\frac{\partial \xi_{r+1}}{\partial s}
(0,z\mid e^{\pi i};\,{\boldsymbol{\omega}})\right).\nonumber \\
\end{equation}
By (\ref{eq:def bilateral}), we observe
\begin{align}
\xi_{r+1}(s,z\mid e^{{\pi}i};\,{\boldsymbol{\omega}})
&=\zeta_{r+1}(s,z\mid 1,\,{\boldsymbol{\omega}})+\zeta_{r+1}(s,z+e^{{\pi}i}\mid e^{{\pi}i},\,{\boldsymbol{\omega}})\nonumber \\
&=\zeta_{r+1}(s,z\mid 1,\,{\boldsymbol{\omega}})+e^{-{\pi}is}\zeta_{r+1}(s,e^{-{\pi}i}z+1\mid 1,\,e^{-{\pi}i}{\boldsymbol{\omega}}).\nonumber
\end{align}
Therefore, we have 
\begin{align}
(x;\,{\boldsymbol{q}})_{r,\infty }
&=\exp({\pi}i\zeta_{r+1}(0,e^{-{\pi}i}z+1\mid 1,\,e^{-{\pi}i}{\boldsymbol{\omega}}))\nonumber \\
{} & \quad \cdot\exp\left(-\frac{\partial \zeta_{r+1}}{\partial s}(0,z\mid 1,\,{\boldsymbol{\omega}})
-\frac{\partial \zeta_{r+1}}{\partial s}(0,e^{-{\pi}i}z+1\mid 1,\,e^{-{\pi}i}{\boldsymbol{\omega}})\right)\nonumber \\
&=\exp\left(\frac{-(-1)^{r+1}{\pi}i}{(r+1)!}B_{r+1,r+1}(z\mid 1,\,{\boldsymbol{\omega}})\right)
\frac{1}{\Gamma_{r+1}(z\mid 1,\,{\boldsymbol{\omega}})\Gamma_{r+1}(1-z\mid 1,\,e^{-{\pi}i}{\boldsymbol{\omega}})}.\nonumber
\end{align}
The last equality follows from (\ref{eq:BarnesSV}) and Lemma \ref{prop:multiple Brnoulli1} (\ref{eq:multiple Brnoulli2}), (\ref{eq:multiple Brnoulli4}).
\end{proof}

Now we give the main theorem of this paper.
\begin{thm}
\label{thm:bilateral relation}
Suppose that $r\geq 2$. Assume that $\omega_{1},\cdots,\omega_{r}\in \mathfrak{H}$ satisfy the condition {\bf[{ORC}]}.
Put
{\allowdisplaybreaks
\begin{align}
D &
:=\left\{z\in \mathbf{C}^{\ast}\,\Bigg\vert \,z=\sum _{k=1}^{r}a_{k}\omega_{k}\,\,(0< a_{1},\cdots,a_{r}< 1)\right\}, \nonumber \\
D_{+} & 
:=\{z\in \mathbf{C}^{\ast}\mid \arg(\omega_{r})< \arg(z)<\pi\}, \nonumber \\
D_{-} &
:=\{z\in \mathbf{C}^{\ast}\mid 0<\arg(z)< \arg(\omega_{1})\}, \nonumber \\
f_{+}(s,z\mid {\boldsymbol{\omega}}) &
:=\zeta_{r}(s,e^{-{\pi}i}z\mid e^{-{\pi}i}{\boldsymbol{\omega}})
+(-1)^{r-1}\zeta_{r}(s,\vert {\boldsymbol{\omega}}\vert^{+}+e^{-{\pi}i}z\mid {\boldsymbol{\omega}}),\nonumber \\
f_{-}(s,z\mid {\boldsymbol{\omega}}) &
:=\zeta_{r}(s,z\mid {\boldsymbol{\omega}})
+(-1)^{r-1}\zeta_{r}(s, z+e^{-{\pi}i}\vert {\boldsymbol{\omega}}\vert^{+} \mid e^{-{\pi}i}{\boldsymbol{\omega}}).\nonumber
\end{align}
}
{\rm(1)}
If $z\in D\cup D_{+}$, one has for all $s\in \mathbf{C}$,
{\allowdisplaybreaks
\begin{align}
f_{+}(s,z\mid {\boldsymbol{\omega}})
\label{eq:f_{+}xi}
&=\sum _{k=1}^{r}(-1)^{k-1}\xi_{r}(s,\vert {\boldsymbol{\omega}}\vert^{\,+}_{[1,k-1]}+e^{-{\pi}i}z\mid \omega_{k};\widehat{{\boldsymbol{\omega}}}^{-}[k,r](k)) \\
\label{eq:f_{+}basic}
&=\frac{e^{\frac{\pi}{2}is}(2\pi)^s}{\Gamma(s)}\sum _{k=1}^{r}\omega_{k}^{-s}\sum _{n=1}^{\infty }
n^{s-1}e^{2{\pi}inz_{k}}\!\!\prod _{j=1,\,j\neq k}^{r}\!\!(1-e^{2{\pi}in\omega_{jk}})^{-1}.
\end{align}
}
{\rm(2)}
If $z\in D\cup D_{-}$, one has for all $s\in \mathbf{C}$,
\begin{align}
f_{-}(s,z\mid \boldsymbol{\omega})
\label{eq:f_{-}xi}
&=\sum _{k=1}^{r}(-1)^{r-k}\xi_{r}(s,z+e^{-{\pi}i}\vert {\boldsymbol{\omega}}\vert^{\,+}_{[k+1,r]}\mid \omega_{k};\widehat{{\boldsymbol{\omega}}}^{-}[k,r](k)) \\
\label{eq:f_{-}basic}
&=\frac{e^{\frac{\pi}{2}is}(2\pi)^s}{\Gamma(s)}\sum _{k=1}^{r}\omega_{k}^{-s}\sum _{n=1}^{\infty }
n^{s-1}e^{-2{\pi}inz_{k}}\!\!\prod _{j=1,\,j\neq k}^{r}\!\!(1-e^{-2{\pi}in\omega_{jk}})^{-1},
\end{align}
\end{thm}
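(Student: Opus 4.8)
The plan is to prove the two displayed equalities of part~(1) separately: first the ``telescoping'' identity \eqref{eq:f_{+}xi}, which writes $f_{+}$ as an alternating sum of bilateral zeta functions, and then the Fourier-type formula \eqref{eq:f_{+}basic}, obtained by feeding each summand through Corollary~\ref{cor:bilateral multiplication2} and Theorem~\ref{thm:multiple basic formula}. Part~(2) then follows by the identical argument run from the opposite end, so I concentrate on part~(1).

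For \eqref{eq:f_{+}xi} I would first check that, under {\bf[ORC]} and for $z\in D\cup D_{+}$, each argument $w_k:=\vert\boldsymbol{\omega}\vert^{+}_{[1,k-1]}+e^{-\pi i}z$ together with the leading period $\omega_k$ and the remaining periods $\widehat{\boldsymbol{\omega}}^{-}[k,r](k)=(\omega_1,\dots,\omega_{k-1},-\omega_{k+1},\dots,-\omega_r)$ satisfies {\bf[SOC]}, so that every $\xi_r$ on the right of \eqref{eq:f_{+}xi} is defined; this is exactly the point where $z\in D\cup D_{+}$ is used, since it forces $\arg(w_k)$ into the closed sector $[\arg(\omega_k)-\pi,\arg(\omega_k)]$. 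Assuming $\Re(s)>r$, I expand each summand by Definition~\ref{dfn:bilateral dfn},
\[
\xi_r(s,w_k\mid\omega_k;\widehat{\boldsymbol{\omega}}^{-}[k,r](k))=A_k+B_k,
\]
where, using the symmetry of the Barnes series in its periods and $e^{-\pi i}\omega_k=-\omega_k$,
\[
A_k=\zeta_r(s,\vert\boldsymbol{\omega}\vert^{+}_{[1,k]}+e^{-\pi i}z\mid\boldsymbol{\omega}^{-}[k+1,r]),\qquad B_k=\zeta_r(s,\vert\boldsymbol{\omega}\vert^{+}_{[1,k-1]}+e^{-\pi i}z\mid\boldsymbol{\omega}^{-}[k,r]).
\]
The key observation is that $A_k$ and $B_{k+1}$ are the same Barnes series (same base point and same multiset of periods), hence equal; so in $\sum_{k=1}^r(-1)^{k-1}(A_k+B_k)$ all interior terms cancel and only $B_1+(-1)^{r+1}B_{r+1}$ survives. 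By the conventions $\vert\boldsymbol{\omega}\vert^{+}_{[1,0]}=0$, $\boldsymbol{\omega}^{-}[1,r]=e^{-\pi i}\boldsymbol{\omega}$, $\vert\boldsymbol{\omega}\vert^{+}_{[1,r]}=\vert\boldsymbol{\omega}\vert^{+}$ and $\boldsymbol{\omega}^{-}[r+1,r]=\boldsymbol{\omega}$, these two survivors are precisely the two Barnes summands defining $f_{+}$, which proves \eqref{eq:f_{+}xi} for $\Re(s)>r$. Since the right-hand side is entire in $s$ by Corollary~\ref{cor:bilateral zero}, the identity extends to all $s\in\mathbf{C}$ by analytic continuation (in particular the apparent poles of $f_{+}$ cancel).

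For \eqref{eq:f_{+}basic} I would apply Corollary~\ref{cor:bilateral multiplication2} with $\omega_0=\omega_k$ to rewrite each $\xi_r(s,w_k\mid\omega_k;\dots)$ as $(e^{\pi i}/\omega_k)^s$ times a bilateral zeta function with leading period $e^{\pi i}$, whose transformed variable is $\zeta_k=e^{\pi i}w_k/\omega_k$ and whose transformed periods are $-\omega_{jk}$ for $j<k$ and $\omega_{jk}$ for $j>k$. One checks, again from {\bf[ORC]} and $z\in D\cup D_{+}$, that $\zeta_k$ and all these periods lie in $\mathfrak{H}$, so Theorem~\ref{thm:multiple basic formula} applies and yields the series $\sum_{n\ge1} n^{s-1}e^{2\pi i n\zeta_k}\prod_{j<k}(1-e^{-2\pi i n\omega_{jk}})^{-1}\prod_{j>k}(1-e^{2\pi i n\omega_{jk}})^{-1}$. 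The only real bookkeeping is to rewrite $1-e^{-2\pi i n\omega_{jk}}=-e^{-2\pi i n\omega_{jk}}(1-e^{2\pi i n\omega_{jk}})$ for $j<k$; this produces a factor $(-1)^{k-1}$ together with an exponential $e^{-2\pi i n\sum_{j<k}\omega_{jk}}$ that cancels precisely the shift hidden in $e^{2\pi i n\zeta_k}=e^{2\pi i n z_k}e^{-2\pi i n\sum_{j<k}\omega_{jk}}$. The prefactors combine as $(e^{\pi i}/\omega_k)^s\cdot e^{-\frac{\pi}{2}is}(2\pi)^s/\Gamma(s)=e^{\frac{\pi}{2}is}(2\pi)^s\omega_k^{-s}/\Gamma(s)$, and the surviving sign $(-1)^{k-1}$ is absorbed against the $(-1)^{k-1}$ in front of the $k$-th term of \eqref{eq:f_{+}xi}, giving \eqref{eq:f_{+}basic}.

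I expect the main obstacle to be the purely positional bookkeeping rather than any deep analytic point: verifying that the transformed data $(w_k,\omega_k,\widehat{\boldsymbol{\omega}}^{-}[k,r](k))$ and $(\zeta_k,e^{\pi i},\dots)$ really satisfy {\bf[SOC]} and lie in $\mathfrak{H}$ respectively. This is exactly where the two distinct domains $D\cup D_{+}$ (for $f_{+}$) and $D\cup D_{-}$ (for $f_{-}$) are forced upon us. Granting these angle computations, the rest is the clean telescoping above together with a routine application of the two quoted results. For part~(2) one repeats the argument verbatim, now expanding $\xi_r(s,z+e^{-\pi i}\vert\boldsymbol{\omega}\vert^{+}_{[k+1,r]}\mid\omega_k;\dots)$ so that the telescoping collapses from the other end onto the two Barnes summands of $f_{-}$, and the Fourier step produces $e^{-2\pi i n z_k}$ and $(1-e^{-2\pi i n\omega_{jk}})^{-1}$; the admissibility of the transformed data now requires $z\in D\cup D_{-}$.
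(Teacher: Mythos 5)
Your proposal is correct and follows essentially the same route as the paper: your telescoping of $\sum_k(-1)^{k-1}(A_k+B_k)$ via $A_k=B_{k+1}$ is exactly the paper's insertion of canceling intermediate Barnes zeta functions read in the opposite direction, and the remaining steps (the multiplication formula with $\alpha=e^{\pi i}/\omega_k$ from Corollary~\ref{cor:bilateral multiplication2}, the angle checks which are the content of Lemma~\ref{lem:f_{+}f_{-}uhp}, and the application of Theorem~\ref{thm:multiple basic formula} with the same $(-1)^{k-1}$ and exponential-shift bookkeeping) coincide with the paper's proof. Your extra detour through $\Re(s)>r$ plus analytic continuation via Corollary~\ref{cor:bilateral zero} is harmless, since the identities already hold between the continued functions.
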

To prove the main theorem, we need the following lemma.
\begin{lem}
\label{lem:f_{+}f_{-}uhp}
Suppose that $\omega_{1},\cdots,\omega_{r}$ satisfy the condition {\bf[{ORC}]}.\\
{\rm(1)}
If $z\in D\cup D_{+}$, then
\begin{equation}
\label{eq:f_{+}uhp}
z_{k}+e^{{\pi}i}\vert {\boldsymbol{\omega}}_{\,k}\vert^{\,+}_{[1,k-1]}\in \mathfrak{H}\,\,\,\,(k=1,\cdots,r).
\end{equation}
{\rm(2)}
If $z\in D\cup D_{-}$, then 
\begin{equation}
\label{eq:f_{-}uhp}
e^{{\pi}i}z_{k}+\vert {\boldsymbol{\omega}}_{\,k}\vert^{\,+}_{[k+1,r]}\in \mathfrak{H}\,\,\,\,(k=1,\cdots,r).
\end{equation}
\end{lem}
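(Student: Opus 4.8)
The plan is to reduce each of the two claims to the positivity of a single imaginary part. Since $\mathfrak{H}=\{w\in\mathbf{C}^{\ast}\mid\Im(w)>0\}$ under the branch convention $-\pi<\arg(\cdot)\leq\pi$, and since clearing the factor $\omega_k^{-1}$ rewrites the quantities in (\ref{eq:f_{+}uhp}) and (\ref{eq:f_{-}uhp}) as
\begin{align}
z_{k}+e^{{\pi}i}\vert{\boldsymbol{\omega}}_{\,k}\vert^{\,+}_{[1,k-1]}
&=\frac{1}{\omega_{k}}\Bigl(z-\sum_{j=1}^{k-1}\omega_{j}\Bigr),\nonumber\\
e^{{\pi}i}z_{k}+\vert{\boldsymbol{\omega}}_{\,k}\vert^{\,+}_{[k+1,r]}
&=\frac{1}{\omega_{k}}\Bigl(\sum_{j=k+1}^{r}\omega_{j}-z\Bigr),\nonumber
\end{align}
it suffices to show $\Im(\omega_k^{-1}(z-\sum_{j<k}\omega_j))>0$ for (1) and $\Im(\omega_k^{-1}(\sum_{j>k}\omega_j-z))>0$ for (2). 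The only input I need about the parameters is the sign of $\Im(\omega_{jk})=\Im(\omega_j/\omega_k)$: since $\omega_j,\omega_k\in\mathfrak{H}$ their arguments both lie in $(0,\pi)$, so $\arg(\omega_j)-\arg(\omega_k)\in(-\pi,\pi)$ and $\Im(\omega_{jk})$ has the same sign as $\sin(\arg(\omega_j)-\arg(\omega_k))$. Under {\bf[{ORC}]} this yields $\Im(\omega_{jk})<0$ for $j<k$, $\Im(\omega_{kk})=0$, and $\Im(\omega_{jk})>0$ for $j>k$.

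For the part of the statement governed by $z\in D$ I would substitute a convex representation $z=\sum_{l=1}^{r}a_{l}\omega_{l}$ with $0<a_{l}<1$, so that all coefficients below are real and $\Im$ acts linearly. In case (1) this gives
\begin{equation}
\Im\Bigl(\frac{z-\sum_{j<k}\omega_{j}}{\omega_{k}}\Bigr)
=\sum_{j=1}^{k-1}(a_{j}-1)\,\Im(\omega_{jk})+\sum_{l=k+1}^{r}a_{l}\,\Im(\omega_{lk}),\nonumber
\end{equation}
the $l=k$ contribution dropping out because $\Im(\omega_{kk})=0$. Each summand is now a product of two factors of definite sign: for $j<k$ one has $a_j-1<0$ and $\Im(\omega_{jk})<0$, while for $l>k$ one has $a_l>0$ and $\Im(\omega_{lk})>0$; hence every term is positive, and since $r\geq2$ (as in \Thm{bilateral relation}) at least one term survives for each $k$, the total is strictly positive. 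Case (2) is entirely parallel, with $\Im(\omega_k^{-1}(\sum_{j>k}\omega_j-z))=-\sum_{l<k}a_l\,\Im(\omega_{lk})+\sum_{j>k}(1-a_j)\,\Im(\omega_{jk})$, again a sum of manifestly positive terms.

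It remains to treat $z\in D_{+}$ in (1) and $z\in D_{-}$ in (2), where the convex representation is unavailable but a direct bound on $\arg(z)$ is provided. For $z\in D_{+}$ I would note $\arg(\omega_k)\leq\arg(\omega_r)<\arg(z)<\pi<\arg(\omega_k)+\pi$, so $\arg(z)-\arg(\omega_k)\in(0,\pi)$ and thus $\Im(z/\omega_k)>0$; combined with $-\Im(\omega_{jk})>0$ for $j<k$ this gives $\Im(\omega_k^{-1}(z-\sum_{j<k}\omega_j))>0$. For $z\in D_{-}$ the symmetric estimate $\arg(z)-\arg(\omega_k)\in(-\pi,0)$ gives $\Im(-z/\omega_k)>0$, and together with $\Im(\omega_{jk})>0$ for $j>k$ this settles case (2).

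I do not expect a genuine obstacle: the argument is essentially a sign audit. The only points requiring care are (i) confirming that $\mathfrak{H}$ is exactly the open upper half-plane $\{\Im>0\}$ under the paper's convention, so that the reduction to imaginary parts is legitimate, and (ii) handling the boundary indices $k=1$ in (1) and $k=r$ in (2), where the pertinent sum is empty and the claim collapses to $\Im(z/\omega_1)>0$ and $\Im(-z/\omega_r)>0$ respectively — both already covered by the same computations, which is also where the hypothesis $r\geq2$ is genuinely needed (for $r=1$ and $z\in D$ the quantity $z/\omega_1$ is a positive real and the conclusion fails).
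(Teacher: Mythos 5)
Your proof is correct and is essentially the paper's own argument: the paper rewrites $z_{k}+e^{\pi i}\vert{\boldsymbol{\omega}}_{\,k}\vert^{\,+}_{[1,k-1]}$ for $z\in D$ as $\sum_{l<k}(1-a_{l})e^{\pi i}\omega_{lk}+a_{k}+\sum_{l>k}a_{l}\omega_{lk}$ and concludes by the same sign audit (via [ORC], each nonreal summand lies in $\mathfrak{H}$), while the cases $z\in D_{\pm}$ follow directly from the definition of $D_{\pm}$, exactly as in your argument-bound computation. Your explicit remarks that $r\geq 2$ is needed when $z\in D$ (the claim fails for $r=1$, $z\in D$, where $z/\omega_{1}$ is a positive real) and that the empty sums at $k=1$, $k=r$ are covered are slightly more careful than the paper, which leaves these points implicit in the context of Theorem \ref{thm:bilateral relation}.
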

\begin{proof}
(1) : 
The condition {\bf[{ORC}]} shows
\begin{equation}
e^{{\pi}i}\omega_{1\,k},\cdots,e^{{\pi}i}\omega_{k-1\,k},\omega_{k+1\,k},\cdots,\omega_{r\,k}\in \mathfrak{H}.\nonumber
\end{equation}
Therefore we have $e^{{\pi}i}\vert \boldsymbol{\omega}_{\,k}\vert^{\,+}_{[1,k-1]}\in \mathfrak{H}$.
When $z\in D_{+}$, the result follows from the definition of $D_{+}$.
Let $z=\sum _{l=1}^{r}a_{l}\omega_{l}\,\,(0< a_{1},\cdots,a_{r}< 1)\in D$, then
\begin{align}
z_{k}+e^{{\pi}i}\vert \boldsymbol{\omega}_{\,k}\vert^{\,+}_{[1,k-1]}
&=\sum _{l=1}^{r}a_{l}\omega_{lk}+e^{{\pi}i}\vert \boldsymbol{\omega}_{\,k}\vert^{\,+}_{[1,k-1]}\nonumber \\
&=\sum _{l=1}^{k-1}(1-a_{l})e^{{\pi}i}\omega_{lk}+a_{k}+\sum _{l=k+1}^{r}a_{l}\omega_{lk}.\nonumber
\end{align}
Since \,$1-a_{l}>0$ for all $l=1,\cdots,k-1$, we obtain (\ref{eq:f_{+}uhp}).\\
(2) : 
By the condition {\bf[{ORC}]}, we have $\vert \boldsymbol{\omega}_{\,k}\vert^{\,+}_{[k+1,r]}\in \mathfrak{H}$.
Hence, $z\in D_{-}$, the result is proved by a similar argument as in (1).
If $z\in D$, then
\begin{align}
e^{{\pi}i}z_{k}+\vert {\boldsymbol{\omega}}_{\,k}\vert^{\,+}_{[k+1,r]}
&=\sum _{l=1}^{r}a_{l}e^{{\pi}i}\omega_{lk}+\vert {\boldsymbol{\omega}}_{\,k}\vert^{\,+}_{[k+1,r]}\nonumber \\
&=\sum _{l=1}^{k-1}a_{l}e^{{\pi}i}\omega_{lk}+e^{{\pi}i}a_{k}+\sum _{l=k+1}^{r}(1-a_{l})\omega_{lk}.\nonumber
\end{align}
Since \,$1-a_{l}>0$ for all $l=k+1,\cdots,r$, we obtain (\ref{eq:f_{-}uhp}).
\end{proof}

{\bf{Proof of Theorem\,4.11}.}
Suppose that $z\in D\cup D_{+}$. Then we have 
\begin{align}
f_{+}(s,z\mid {\boldsymbol{\omega}})
&=\sum _{k=0}^{r-1}(-1)^{k}\{\zeta_{r}(s,\vert {\boldsymbol{\omega}}\vert^{\,+}_{[1,k]}+e^{-{\pi}i}z\mid {\boldsymbol{\omega}}^{-}[k+1,r])\nonumber \\
{} & \quad+\zeta_{r}(s,\vert {\boldsymbol{\omega}}\vert^{\,+}_{[1,k+1]}+e^{-{\pi}i}z\mid {\boldsymbol{\omega}}^{-}[k+2,r])\}\nonumber \\
&=\sum _{k=1}^{r}(-1)^{k-1}\xi_{r}(s,\vert {\boldsymbol{\omega}}\vert^{\,+}_{[1,k-1]}+e^{-{\pi}i}z\mid \omega_{k};\widehat{{\boldsymbol{\omega}}}^{-}[k,r](k))\nonumber \\
&=\sum _{k=1}^{r}(-1)^{k-1}\left(\frac{e^{{\pi}i}}{\omega_{k}}\right)^s
\xi_{r}(s,z_{k}+e^{{\pi}i}\vert {\boldsymbol{\omega}}_{\,k}\vert^{\,+}_{[1,k-1]}\mid e^{{\pi}i};e^{{\pi}i}\widehat{{\boldsymbol{\omega}}}^{\,-}_{\,k}[k,r])\nonumber \\
&=\sum _{k=1}^{r}(-1)^{k-1}\left(\frac{e^{{\pi}i}}{\omega_{k}}\right)^s
\xi_{r}(s,z_{k}+e^{{\pi}i}\vert \boldsymbol{\omega}_{\,k}\vert^{\,+}_{[1,k-1]}\mid e^{{\pi}i};\widehat{\boldsymbol{\omega}}^{\,-}_{\,k}[1,k-1]).\nonumber
\end{align}
The second equality follows from (\ref{eq:def bilateral}) and the third one from Lemma\,\ref{prop:bilateral multiplication}.
We remark that all $r$-ple zeta functions and bilateral $r$-ple zeta functions appeared above 
are well-defined, since all parameters appeared in the above equalities satisfy the condition {\bf[{SOC}]}.

In addition, by $z\in D\cup D_{+}$, we may apply Lemma\,\ref{lem:f_{+}f_{-}uhp} to (\ref{eq:f_{+}xi}).
Hence it follows that all variables of the bilateral $r$-ple zeta functions appeared in (\ref{eq:f_{+}xi}) are in the upper half plane.
Therefore, by Theorem\,\ref{thm:multiple basic formula},
\begin{align}
f_{+}(s,z\mid {\boldsymbol{\omega}})
&=\frac{e^{\frac{\pi}{2}is}(2\pi)^s}{\Gamma(s)}\sum _{k=1}^{r}(-1)^{k-1}\omega_{k}^{-s}
\sum _{n=1}^{\infty }n^{s-1}e^{2{\pi}in(z_{k}+e^{{\pi}i}\vert {\boldsymbol{\omega}}_{\,k}\vert^{\,+}_{[1,k-1]})}\nonumber \\
{} & \quad \cdot\! 
\prod _{j=1}^{k-1}(1-e^{-2{\pi}in\omega_{jk}})^{-1}\prod _{j=k+1}^{r}(1-e^{2{\pi}in\omega_{jk}})^{-1}\nonumber \\
&=\frac{e^{\frac{\pi}{2}is}(2\pi)^s}{\Gamma(s)}\sum _{k=1}^{r}\omega_{k}^{-s}\sum _{n=1}^{\infty }
n^{s-1}e^{2{\pi}inz_{k}}\!\!\prod _{j=1,\,j\neq k}^{r}\!\!(1-e^{2{\pi}in\omega_{jk}})^{-1}.\nonumber
\end{align}
\indent
Let $z\in D\cup D_{-}$. Similarly to the discussion above, one observes
\begin{align}
f_{-}(s,z\mid {\boldsymbol{\omega}})
&=\sum _{k=0}^{r-1}(-1)^{k}\{\zeta_{r}(s,z+e^{-{\pi}i}\vert {\boldsymbol{\omega}}\vert^{\,+}_{[r+1-k,r]}\mid {\boldsymbol{\omega}}^{-}[r+1-k,r]) \nonumber \\
{} & \quad+\zeta_{r}(s,z+e^{-{\pi}i}\vert {\boldsymbol{\omega}}\vert^{\,+}_{[r-k,r]}\mid {\boldsymbol{\omega}}^{-}[r-k,r])\}\nonumber \\
&=\sum _{k=1}^{r}(-1)^{r-k}\xi_{r}(s,z+e^{-{\pi}i}\vert {\boldsymbol{\omega}}\vert^{\,+}_{[k+1,r]}\mid \omega_{k};\widehat{{\boldsymbol{\omega}}}^{-}[k,r](k))\nonumber \\
&=\sum _{k=1}^{r}(-1)^{r-k}\left(\frac{e^{{\pi}i}}{\omega_{k}}\right)^s
\xi_{r}(s,e^{{\pi}i}z_{k}+\vert {\boldsymbol{\omega}}_{\,k}\vert^{\,+}_{[k+1,r]}\mid e^{{\pi}i};\widehat{{\boldsymbol{\omega}}}^{\,-}_{\,k}[1,k-1])\nonumber \\
&=\frac{e^{\frac{\pi}{2}is}(2\pi)^s}{\Gamma(s)}\sum _{k=1}^{r}\omega_{k}^{-s}\sum _{n=1}^{\infty }
n^{s-1}e^{-2{\pi}inz_{k}}\!\!\prod _{j=1,\,j\neq k}^{r}\!\! (1-e^{-2{\pi}in\omega_{jk}})^{-1}.\nonumber
\end{align}
This completes the proof of Theorem \ref{thm:bilateral relation}.
\begin{chu}
Let $r=1$ and $\omega_{1}\in \mathfrak{H}$. 
Then, for all $s \in \mathbf{C}$, we have  
\begin{equation}
\label{eq:Lipschitz2}
f_{+}(s,z\mid \omega_{1})
=\xi_{1}(s,e^{-{\pi}i}z\mid \omega_{1})
=\frac{e^{\frac{\pi}{2}is}(2\pi)^s}{\Gamma(s)}\omega_{1}^{-s}\sum _{n=1}^{\infty }
n^{s-1}e^{2{\pi}inz_{1}} \,\,\,\,(z\in D_{+}),
\end{equation}
and 
\begin{equation}
\label{eq:Lipschitz3}
f_{-}(s,z\mid \omega_{1})
=\xi_{1}(s,z\mid \omega_{1})
=\frac{e^{\frac{\pi}{2}is}(2\pi)^s}{\Gamma(s)}\omega_{1}^{-s}\sum _{n=1}^{\infty }
n^{s-1}e^{-2{\pi}inz_{1}}  \,\,\,\,(z\in D_{-}).
\end{equation}
In particular, when $\Re(s)<0$, we have (\ref{eq:Lipschitz2}) and (\ref{eq:Lipschitz3}) under the condition $z\in D$.
\end{chu}

\begin{lem}
\label{prop:ext 2}
Assume that $z\in D\cup D_{\pm }$. If $\arg(\omega_{j})\neq \arg(\omega_{k})\,\,(j\neq k)$, 
then the right-hand sides of {\rm(\ref{eq:f_{+}basic})} and {\rm(\ref{eq:f_{-}basic})} converges absolutely.
\end{lem}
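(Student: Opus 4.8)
The plan is to reduce the whole statement to a single elementary estimate on a geometric series. In both (\ref{eq:f_{+}basic}) and (\ref{eq:f_{-}basic}) the outer sum over $k$ is finite, so it suffices to prove, for each fixed $k$, that the inner series over $n$ converges absolutely. For the $f_{+}$ series the general term is $n^{s-1}e^{2\pi i n z_{k}}\prod_{j\neq k}(1-e^{2\pi i n\omega_{jk}})^{-1}$, and my goal is to bound its modulus by $C\,n^{\Re(s)-1}e^{-n\lambda_{k}}$ with a constant $\lambda_{k}>0$; exponential decay then dominates the polynomial factor $n^{\Re(s)-1}$ for every $s\in\mathbf{C}$, giving absolute convergence.

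First I would read off the signs of the imaginary parts from the condition {\bf[{ORC}]}. Because $\arg(\omega_{j})<\arg(\omega_{k})$ for $j<k$ (and these arguments are pairwise distinct, as the hypothesis $\arg(\omega_{j})\neq\arg(\omega_{k})$ guarantees), one has $\Im(\omega_{jk})<0$ for $j<k$ and $\Im(\omega_{jk})>0$ for $j>k$. I then split $\prod_{j\neq k}\vert 1-e^{2\pi i n\omega_{jk}}\vert^{-1}$ accordingly. For $j>k$ one has $\vert e^{2\pi i n\omega_{jk}}\vert=e^{-2\pi n\Im(\omega_{jk})}\leq e^{-2\pi\Im(\omega_{jk})}<1$, so each such factor is bounded by the constant $(1-e^{-2\pi\Im(\omega_{jk})})^{-1}$, uniformly in $n$. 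For $j<k$ the same modulus grows; using $\vert 1-w\vert\geq\vert w\vert-1$ with $\vert e^{2\pi i n\omega_{jk}}\vert=e^{2\pi n\vert\Im(\omega_{jk})\vert}$ gives $\vert 1-e^{2\pi i n\omega_{jk}}\vert^{-1}\leq C_{jk}\,e^{-2\pi n\vert\Im(\omega_{jk})\vert}$. Multiplying these bounds yields $\prod_{j\neq k}\vert 1-e^{2\pi i n\omega_{jk}}\vert^{-1}\leq C\,e^{-2\pi n\sum_{j<k}\vert\Im(\omega_{jk})\vert}$ with $C$ independent of $n$.

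Combining this with $\vert e^{2\pi i n z_{k}}\vert=e^{-2\pi n\Im(z_{k})}$, the $n$-th term is bounded by $C\,n^{\Re(s)-1}e^{-n\lambda_{k}}$, where $\lambda_{k}=2\pi\bigl(\Im(z_{k})-\sum_{j<k}\Im(\omega_{jk})\bigr)$. The crux of the argument — and the only place the hypothesis $z\in D\cup D_{+}$ is used — is the positivity $\lambda_{k}>0$. This is precisely the assertion $z_{k}+e^{\pi i}\vert\boldsymbol{\omega}_{k}\vert^{+}_{[1,k-1]}\in\mathfrak{H}$ supplied by Lemma \ref{lem:f_{+}f_{-}uhp}(1), since that membership unwinds to $\Im(z_{k})-\sum_{l<k}\Im(\omega_{lk})>0$. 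With $\lambda_{k}>0$ the series $\sum_{n}n^{\Re(s)-1}e^{-n\lambda_{k}}$ converges for all $s$, and summing over the finitely many $k$ proves absolute convergence of (\ref{eq:f_{+}basic}).

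The treatment of (\ref{eq:f_{-}basic}) is entirely parallel, with the roles of the ranges $j<k$ and $j>k$ exchanged: the factor $(1-e^{-2\pi i n\omega_{jk}})^{-1}$ now decays for $j>k$ and stays bounded for $j<k$, the relevant rate is $2\pi\bigl(-\Im(z_{k})+\sum_{j>k}\Im(\omega_{jk})\bigr)$, and its positivity is exactly $e^{\pi i}z_{k}+\vert\boldsymbol{\omega}_{k}\vert^{+}_{[k+1,r]}\in\mathfrak{H}$ from Lemma \ref{lem:f_{+}f_{-}uhp}(2), valid under $z\in D\cup D_{-}$. I anticipate no real obstacle here: the hypothesis $\arg(\omega_{j})\neq\arg(\omega_{k})$ is used only to keep every $\Im(\omega_{jk})$ away from $0$, so that each factor is exponentially separated from resonance, while Lemma \ref{lem:f_{+}f_{-}uhp} delivers exactly the positive decay rates needed to close the estimate.
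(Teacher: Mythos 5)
Your proof is correct and takes essentially the same route as the paper: after using the pairwise distinctness of the $\arg(\omega_{j})$ to reorder the parameters so that \textbf{[ORC]} holds, the positivity of the decay rates $\lambda_{k}$ is exactly the content of Lemma \ref{lem:f_{+}f_{-}uhp}, which is precisely what the paper invokes. The only difference is that you spell out the elementary exponential bounds on $\prod_{j\neq k}\vert 1-e^{\pm 2\pi i n\omega_{jk}}\vert^{-1}$ that the paper's two-line proof leaves implicit.
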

\begin{proof}
If $\arg(\omega_{j})\neq \arg(\omega_{k})\,\,(j\neq k)$, we may change the order of parameters such as  
$\arg(\omega_{j})<\arg(\omega_{k})\,\,(j<k)$. 
Hence, by Lemma\,\ref{lem:f_{+}f_{-}uhp}, we obtain the assertion.
\end{proof}

\begin{cor}
\label{cor:Narukawa basic0}
Let $r\geq 2$, $z\in D$. Suppose that $z,\omega_{1},\cdots,\omega_{r}\in \mathfrak{H}$ satisfy the condition {\bf[{ORC}]}.
If $z\in D\cup D_{\pm}$, one has 
\begin{align}
\label{eq:f_{+} D1}
\frac{\partial f_{\pm}}{\partial s}(1-m,z\mid {\boldsymbol{\omega}})
&=\frac{(-1)^{m-1}(m-1)!}{(2{\pi}i)^{m-1}}\sum _{k=1}^{r}\omega_{k}^{m-1}\sum _{n=1}^{\infty }
\frac{e^{\pm 2{\pi}inz_{k}}}{n^m}\!\!\prod _{j=1,\,j\neq k}^{r}\!\!(1-e^{\pm2{\pi}in\omega_{jk}})^{-1}. 
\end{align}
In particular, for any $z\in \mathbf{C}$,
\begin{equation}
\label{eq:f_{+} D2}
\exp\left(-\frac{\partial f_{\pm }}{\partial s}(0,z\mid {\boldsymbol{\omega}})\right)
=\prod _{k=1}^{r}(\widetilde{x_{k}^{\pm 1};\,\widehat{{\boldsymbol{q}}}_{\,k}^{\pm 1}})_{r-1,\infty }.
\end{equation}
\end{cor}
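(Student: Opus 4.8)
The plan is to extract everything from the explicit Fourier-type representations (\ref{eq:f_{+}basic}) and (\ref{eq:f_{-}basic}) of \Thm{bilateral relation}, which are already known to hold for all $s\in\mathbf{C}$. First I would record two facts about $f_{\pm}(s,z\mid\boldsymbol{\omega})$ viewed as a function of $s$. By the identities (\ref{eq:f_{+}xi}) and (\ref{eq:f_{-}xi}), $f_{\pm}$ is a finite integer combination of bilateral $r$-ple zeta functions; hence it is entire in $s$ (by \Cor{bilateral zero}(1)) and, by (\ref{eq:bilateral zero}), satisfies $f_{\pm}(1-m,z\mid\boldsymbol{\omega})=0$ for every $m\in\mathbf{N}$. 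This is exactly the situation of \Cor{q-fact}, so I may compute the derivative at a negative integer as the elementary difference quotient
\[
\frac{\partial f_{\pm}}{\partial s}(1-m,z\mid\boldsymbol{\omega})
=\lim_{s\to 0}\frac{f_{\pm}(s+1-m,z\mid\boldsymbol{\omega})-f_{\pm}(1-m,z\mid\boldsymbol{\omega})}{s}
=\lim_{s\to 0}\frac{f_{\pm}(s+1-m,z\mid\boldsymbol{\omega})}{s}.
\]

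Next I would feed the series (\ref{eq:f_{+}basic}) (respectively (\ref{eq:f_{-}basic})), evaluated at $s+1-m$, into this limit. The only factor singular at $s=0$ is $1/\Gamma(s+1-m)$: since $\Gamma$ has a simple pole at $1-m$ with residue $(-1)^{m-1}/(m-1)!$, we have $1/\Gamma(s+1-m)=(-1)^{m-1}(m-1)!\,s+O(s^{2})$, and the factor $s$ cancels the $1/s$ in the limit. Evaluating the remaining (entire) factors at $s=0$ turns $n^{s-m}$ into $n^{-m}$, $\omega_{k}^{-(s+1-m)}$ into $\omega_{k}^{m-1}$, and $e^{\frac{\pi}{2}i(s+1-m)}(2\pi)^{s+1-m}$ into $e^{\frac{\pi}{2}i(1-m)}(2\pi)^{1-m}$. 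A one-line check that $e^{\frac{\pi}{2}i(1-m)}(2\pi)^{1-m}=(2\pi i)^{-(m-1)}$ (using $i=e^{\pi i/2}$) then assembles the claimed prefactor $(-1)^{m-1}(m-1)!/(2\pi i)^{m-1}$ and proves (\ref{eq:f_{+} D1}) for $z\in D\cup D_{\pm}$; the two signs are handled identically because the two series differ only by $z_{k}\mapsto -z_{k}$, $\omega_{jk}\mapsto-\omega_{jk}$ in the exponentials.

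For (\ref{eq:f_{+} D2}) I would specialize (\ref{eq:f_{+} D1}) to $m=1$, where the prefactor and the powers $\omega_{k}^{m-1}$, $n^{-m}$ collapse, giving
\[
-\frac{\partial f_{\pm}}{\partial s}(0,z\mid\boldsymbol{\omega})
=-\sum_{k=1}^{r}\sum_{n=1}^{\infty}\frac{e^{\pm 2\pi i nz_{k}}}{n}
\prod_{j\neq k}(1-e^{\pm 2\pi i n\omega_{jk}})^{-1}.
\]
Fixing $k$ and treating the $+$ case, I expand each factor $(1-e^{2\pi i n\omega_{jk}})^{-1}$ into an absolutely convergent geometric series (the interchange being justified by Lemma~\ref{prop:ext 2}). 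Here the condition \textbf{[ORC]} is decisive: the $k-1$ factors with $j<k$ have $|q_{jk}|>1$ and must be written as $-\sum_{m_{j}\ge 0}q_{jk}^{-(m_{j}+1)n}$, producing the overall sign $(-1)^{k-1}$ together with shifted exponents, whereas the $r-k$ factors with $j>k$ have $|q_{jk}|<1$ and expand as $\sum_{m_{j}\ge 0}q_{jk}^{m_{j}n}$. After interchanging the summations and using $\sum_{n\ge 1}w^{n}/n=-\log(1-w)$, the $k$-th summand of $-\partial_{s}f_{+}(0,z)$ becomes $(-1)^{k-1}\sum_{\boldsymbol{m}}\log\bigl(1-x_{k}\prod_{j<k}q_{jk}^{-(m_{j}+1)}\prod_{j>k}q_{jk}^{m_{j}}\bigr)$, and exponentiating gives the product $\prod_{\boldsymbol{m}}\bigl(1-x_{k}\prod_{j<k}q_{jk}^{-(m_{j}+1)}\prod_{j>k}q_{jk}^{m_{j}}\bigr)^{(-1)^{k-1}}$, which is precisely $(\widetilde{x_{k};\,\widehat{\boldsymbol{q}}_{\,k}})_{r-1,\infty}$ by \Definition{Def1.1}(2) (equivalently by (\ref{eq:general q-fact+})). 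The $-$ case is symmetric: now the $r-k$ factors with $j>k$ have modulus $>1$, the sign is $(-1)^{r-k}$, and (\ref{eq:general q-fact-}) identifies the $k$-th factor as $(\widetilde{x_{k}^{-1};\,\widehat{\boldsymbol{q}}_{\,k}^{-1}})_{r-1,\infty}$. Multiplying over $k$ yields (\ref{eq:f_{+} D2}) for $z\in D\cup D_{\pm}$, and, since both sides are holomorphic in $z$, analytic continuation extends it to all $z\in\mathbf{C}$.

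The step I expect to be the main obstacle is this last identification: keeping the bookkeeping straight so that the $j<k$ (resp.\ $j>k$) factors yield \emph{exactly} the overall sign $(-1)^{k-1}$ (resp.\ $(-1)^{r-k}$) and the shift $m_{j}\mapsto m_{j}+1$ that define the generalized $q$-shifted factorial, so that the result matches \Definition{Def1.1}(2) on the nose rather than up to a stray sign or reindexing. Everything else is a mechanical use of the residue of $\Gamma$ at $1-m$ and the vanishing (\ref{eq:bilateral zero}).
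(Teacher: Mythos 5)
Your argument is correct and essentially coincides with the paper's: the derivative formula (\ref{eq:f_{+} D1}) is obtained in the paper exactly as you do it, via the decomposition (\ref{eq:f_{+}xi})--(\ref{eq:f_{-}xi}), the vanishing (\ref{eq:bilateral zero}) and entirety from Corollary \ref{cor:bilateral zero}, and the residue of $\Gamma$ at $s=1-m$ applied to the Fourier series (\ref{eq:f_{+}basic}), (\ref{eq:f_{-}basic}). The only packaging difference is at (\ref{eq:f_{+} D2}): the paper applies the already-proved identity (\ref{eq:bilateral q-fact}) of Corollary \ref{cor:q-fact} termwise to each bilateral zeta function in (\ref{eq:f_{+}xi})--(\ref{eq:f_{-}xi}) and then identifies the resulting product via (\ref{eq:general q-fact+}) and (\ref{eq:general q-fact-}), whereas you inline that same geometric-series and logarithm computation directly into the $m=1$ case of (\ref{eq:f_{+} D1}) --- the sign bookkeeping $(-1)^{k-1}$, $(-1)^{r-k}$ and the shift $m_{j}\mapsto m_{j}+1$ that you flag as the main risk is exactly what those two displayed identities of the preliminary lemma package, so your route carries the same content with the lemma's proof unfolded rather than cited.
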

\begin{proof}
By (\ref{eq:f_{+}xi}) and Corollary \ref{cor:bilateral zero}, one has 
\begin{align}
\frac{\partial f_{\pm }}{\partial s}
(1-m,z\mid {\boldsymbol{\omega}})
&=\lim_{s\to 0}\frac{f_{\pm }(s+1-m,z\mid {\boldsymbol{\omega}})-f_{\pm }(1-m,z\mid {\boldsymbol{\omega}})}{s}\nonumber \\
&=\frac{(-1)^{m-1}(m-1)!}{(2{\pi}i)^{m-1}}
\sum _{k=1}^{r}\omega_{k}^{m-1}\sum _{n=1}^{\infty }
\frac{e^{\pm 2{\pi}inz_{k}}}{n^m}\!\!\prod _{j=1,\,j\neq k}^{r}\!\!(1-e^{\pm2{\pi}in\omega_{jk}})^{-1}.\nonumber
\end{align}

Similarly, one obtains
\begin{align}
\exp\left(-\frac{\partial f_{+}}{\partial s}(0,z\mid {\boldsymbol{\omega}})\right)
&=\prod _{k=1}^{r}\exp\left(-(-1)^{k-1}\frac{\partial \xi_{r}}{\partial s}
(0,z_{k}+e^{{\pi}i}\vert {\boldsymbol{\omega}}_{\,k}\vert^{\,+}_{[1,k-1]}\mid e^{{\pi}i};\widehat{{\boldsymbol{\omega}}}^{\,-}_{\,k}[1,k-1])\right)\nonumber \\
&=\prod _{k=1}^{r}{(\vert {\boldsymbol{q}}^{\,-1}_{\,k}\vert^{\times}_{[1,k-1]}x_{k};\,\widehat{{\boldsymbol{q}}}^{\,-1}_{\,k}[1,k-1])^{(-1)^{k-1}}_{r-1,\infty }}\nonumber \\
&=\prod _{k=1}^{r}(\widetilde{x_{k};\,\widehat{{\boldsymbol{q}}}_{\,k}})_{r-1,\infty }.\nonumber
\end{align}
The second equality follows from (\ref{eq:bilateral q-fact}) of Corollary \ref{cor:q-fact} 
and the third one from (\ref{eq:general q-fact+}).\\

Similarly,
\begin{align}
\exp\left(-\frac{\partial f_{-}}{\partial s}(0,z\mid {\boldsymbol{\omega}})\right)
&=\prod _{k=1}^{r}\exp\left(-(-1)^{r-k}\frac{\partial \xi_{r}}{\partial s}
(0,e^{{\pi}i}z_{k}+\vert {\boldsymbol{\omega}}_{\,k}\vert^{\,+}_{[k+1,r]}\mid e^{{\pi}i};\widehat{{\boldsymbol{\omega}}}^{\,-}_{\,k}[1,k-1]\right)\nonumber \\
&=\prod _{k=1}^{r}{(\vert {\boldsymbol{q}}_{\,k}\vert^{\times}_{[k+1,r]}x_{k}^{-1};\,\widehat{{\boldsymbol{q}}}^{\,-1}_{\,k}[1,k-1])^{(-1)^{r-k}}_{r-1,\infty }}\nonumber \\
&=\prod _{k=1}^{r}(\widetilde{x_{k}^{-1};\,\widehat{{\boldsymbol{q}}}^{\,-1}_{\,k}})_{r-1,\infty }.\nonumber
\end{align}
\end{proof}

\begin{chu}
\label{chu:ext 3}
By Lemma\,\ref{prop:ext 2}, if $z\in D$ and $\arg(\omega_{j})\neq \arg(\omega_{k})\,(j\neq k)$, 
then the right-hand sides of (\ref{eq:f_{+} D1}) and (\ref{eq:f_{+} D2}) both converge absolutely.
\end{chu}

\section{Applications}

\subsection{Dedekind's $\eta$-inversion formula and Ramanujan's formula}
In this subsection, we assume that $\tau \in \mathfrak{H}$.
\begin{lem}
{\rm(1)} For any $s\in \mathbf{C}$, 
\begin{equation}
\label{eq:double zeta lem}
\zeta_{2}(s,\omega_{1}\mid \omega_{1},\omega_{2})-\zeta_{2}(s,\omega_{2}\mid \omega_{1},\omega_{2})
=(\omega_{1}^{-s}-\omega_{2}^{-s})\zeta(s).
\end{equation}
Here $\zeta(s)$ is the Riemann zeta function.\\
{\rm(2)} For any $N \in \mathbf{N}$, 
\begin{align}
\label{eq:double zeta pole lem}
\lim_{s\to 2N}(1-e^{{\pi}is})\zeta_{2}(s,z \mid \omega_{1},\omega_{2})=-\frac{\pi{i}}{\omega_{1}\omega_{2}}\delta_{1,N}.
\end{align}
Here $\delta_{1,N}$ is the Kronecker's delta.
\end{lem}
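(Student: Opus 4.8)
The plan is to prove (1) by a direct rearrangement of the defining double series inside the region of absolute convergence, and then to read off (2) from the known pole structure of $\zeta_{2}$. First I would work in the half-plane $\Re(s)>2$, where Barnes' series converges absolutely so that any regrouping is legitimate. Writing out the definition and shifting the summation index,
\[
\zeta_{2}(s,\omega_{1}\mid \omega_{1},\omega_{2})
=\sum_{m,n\geq 0}\frac{1}{((m+1)\omega_{1}+n\omega_{2})^{s}}
=\omega_{1}^{-s}\zeta(s)+S,\qquad
S:=\sum_{m,n\geq 1}\frac{1}{(m\omega_{1}+n\omega_{2})^{s}},
\]
where I have split off the $n=0$ terms. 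The identical computation applied to $\zeta_{2}(s,\omega_{2}\mid \omega_{1},\omega_{2})$, now peeling off the $m=0$ terms, yields $\omega_{2}^{-s}\zeta(s)+S$ with exactly the same interior sum $S$. Subtracting, the double sum $S$ cancels and I obtain $(\omega_{1}^{-s}-\omega_{2}^{-s})\zeta(s)$ for $\Re(s)>2$. Both sides continue meromorphically to all of $\mathbf{C}$ (the left-hand side by Lemma \ref{prop:Barnes zeta}, the right-hand side trivially), so the identity persists for every $s$ by the identity theorem.

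For (2) I would exploit that $1-e^{\pi i s}$ has a \emph{simple} zero precisely at the even integers, with local expansion $1-e^{\pi i s}=-\pi i\,(s-2N)+O((s-2N)^{2})$ near $s=2N$, while by Lemma \ref{prop:Barnes zeta} the only poles of $\zeta_{2}(s,z\mid \omega_{1},\omega_{2})$ are simple ones at $s=1$ and $s=2$. For $N\geq 2$ the point $s=2N$ is regular for $\zeta_{2}$, so the product tends to $0$, in agreement with $\delta_{1,N}=0$. For $N=1$ the simple zero of $1-e^{\pi i s}$ exactly cancels the simple pole of $\zeta_{2}$ at $s=2$, and the limit equals $-\pi i\,\Res{s=2}\zeta_{2}(s,z\mid \omega_{1},\omega_{2})$; by the residue formula (\ref{eq:BarnesSV2}) together with the value (\ref{eq:B_{r,0}}) this residue is $B_{2,0}(z\mid \omega_{1},\omega_{2})=1/(\omega_{1}\omega_{2})$, giving $-\pi i/(\omega_{1}\omega_{2})$, as claimed.

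Neither part involves a genuine difficulty; the only points requiring care are bookkeeping. In (1) the subtle step is to confirm that the two meromorphic continuations really do agree at $s=1$, where both sides carry a pole. It is worth recording the consistency check: using (\ref{eq:BarnesSV2}) one finds $\Res{s=1}\bigl[\zeta_{2}(s,\omega_{1}\mid \omega_{1},\omega_{2})-\zeta_{2}(s,\omega_{2}\mid \omega_{1},\omega_{2})\bigr]=-B_{2,1}(\omega_{1}\mid \omega_{1},\omega_{2})+B_{2,1}(\omega_{2}\mid \omega_{1},\omega_{2})$, which evaluates to $\omega_{1}^{-1}-\omega_{2}^{-1}$, precisely the residue at $s=1$ of the right-hand side. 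The hard part, such as it is, lies in (2) in verifying that no even integer other than $s=2$ can meet a pole of $\zeta_{2}$; but this is immediate from the fact that the pole set is contained in $\{1,2\}$, so that the case distinction $N=1$ versus $N\geq 2$ exhausts all possibilities.
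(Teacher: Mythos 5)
Your proof is correct, but part (1) takes a genuinely different route from the paper. The paper never touches the series directly: it writes the left-hand side as $\lim_{z\to 0}\{\zeta_{2}(s,z+\omega_{1}\mid \omega_{1},\omega_{2})-\zeta_{2}(s,z+\omega_{2}\mid \omega_{1},\omega_{2})\}$ and descends twice through the difference relation (\ref{eq:Barnes zeta relation}), so that the singular terms $\zeta_{1}(s,z\mid\omega_{j})$ and $z^{-s}$ cancel pairwise before the limit is taken, yielding $\zeta_{1}(s,\omega_{1}\mid\omega_{1})-\zeta_{1}(s,\omega_{2}\mid\omega_{2})=(\omega_{1}^{-s}-\omega_{2}^{-s})\zeta(s)$ directly for every $s$ in the continued domain. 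Your peeling-off of the $n=0$ (resp.\ $m=0$) boundary terms is more elementary and avoids the limit-interchange delicacy entirely (the identification $(m\omega_{1})^{-s}=\omega_{1}^{-s}m^{-s}$ is legitimate since $\arg(m\omega_{1})=\arg(\omega_{1})$), at the modest cost of proving the identity only on $\Re(s)>2$ and then invoking meromorphic continuation plus the identity theorem; your residue check at $s=1$ is a nice confirmation but not logically needed, since an identity of meromorphic functions on a domain persists through common poles. For part (2) you and the paper do essentially the same thing: expand $1-e^{\pi is}=-\pi i(s-2N)+O((s-2N)^{2})$, kill the limit for $N\geq 2$ by regularity, and for $N=1$ evaluate $-\pi i\,\mathrm{Res}_{s=2}\zeta_{2}$ via (\ref{eq:BarnesSV2}) and (\ref{eq:B_{r,0}}). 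One small citation slip: the fact that the poles of $\zeta_{2}(s,z\mid\omega_{1},\omega_{2})$ in $s$ are simple and confined to $\{1,2\}$ comes from Barnes' meromorphic continuation recalled in Section 3 (and is implicit in (\ref{eq:BarnesSV2})), not from Lemma \ref{prop:Barnes zeta}, which concerns continuation in the variable $z$; the paper's own proof leans on the same fact tacitly when it writes $\mathrm{Res}_{s=2N}\zeta_{2}=B_{2,0}\,\delta_{1,N}$.
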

\begin{proof}
{\rm(1) :} 
By Lemma \ref{prop:Barnes zeta}, we obtain
\begin{align}
(LHS)&=\lim_{z\to 0}\{\zeta_{2}(s,z+\omega_{1}\mid \omega_{1},\omega_{2})-\zeta_{2}(s,z+\omega_{2}\mid \omega_{1},\omega_{2})\} \nonumber \\
&=\lim_{z\to 0}\{\{\zeta_{2}(s,z\mid \omega_{1},\omega_{2})-\zeta_{1}(s,z\mid \omega_{2})\}
-\{\zeta_{2}(s,z\mid \omega_{1},\omega_{2})-\zeta_{1}(s,z\mid \omega_{1})\}\} \nonumber \\
&=\lim_{z\to 0}\{\{\zeta_{1}(s,z\mid \omega_{1})-z^{-s}\}-\{\zeta_{1}(s,z\mid \omega_{2})-z^{-s}\}\} \nonumber \\
&=\lim_{z\to 0}\{\zeta_{1}(s,z+\omega_{1}\mid \omega_{1})-\zeta_{1}(s,z+\omega_{2}\mid \omega_{2})\} \nonumber \\
&=\zeta_{1}(s,\omega_{1}\mid \omega_{1})-\zeta_{1}(s,\omega_{2}\mid \omega_{2})
=(\omega_{1}^{-s}-\omega_{2}^{-s})\zeta(s). \nonumber
\end{align}
{\rm(2) :} 
By (\ref{eq:B_{r,0}}) and (\ref{eq:BarnesSV2}), we obtain
\begin{align}
(LHS)&=\lim_{s\to 2N}\frac{1-e^{{\pi}is}}{s-2N}(s-2N)\zeta_{2}(s,z \mid \omega_{1},\omega_{2}) \nonumber \\
&=-\pi{i}\mathop{\rm Res}_{s=2N}\zeta_{2}(s,z \mid \omega_{1},\omega_{2})ds
=-\pi{i}B_{2,0}(z \mid \omega_{1},\omega_{2})\delta_{1,N}
=-\frac{\pi{i}}{\omega_{1}\omega_{2}}\delta_{1,N}.\nonumber
\end{align}
\end{proof}
\begin{lem}
\label{prop:eta basic}
Define
\begin{align}
g(s,\tau):=&
\xi_{2}(s,\tau\mid e^{{\pi}i};\tau)-\left(e^{{\pi}i}\frac{1}{\tau}\right)^{s}\xi_{2}\left(s,e^{{\pi}i}\frac{1}{\tau}\,\bigg\vert \, e^{{\pi}i};e^{{\pi}i}\frac{1}{\tau}\right) \nonumber \\
=&\xi_{2}(s,\tau\mid e^{{\pi}i};\tau)-\xi_{2}(s,1\mid \tau;1). \nonumber
\end{align}
{\rm(1)} We have
\begin{equation}
\label{eq:eta basic1}
\frac{\partial g}{\partial s}(0,\tau)
=-\frac{{\pi}i}{4}+\frac{{\pi}i}{12}\left(\tau+\frac{1}{\tau}\right)+\frac{1}{2}\log{\tau}.
\end{equation}
{\rm(2)} For any $N\in \mathbf{N}$,
\begin{align}
\label{eq:Ramanujan basic1}
\frac{\partial g}{\partial s}(-2N,\tau)
=\frac{{\pi}iB_{2,2+2N}(0\mid 1,\tau)}{(2N+2)(2N+1)}+\frac{(-1)^N}{2}(\tau^{2N}-1)(2N)!(2\pi)^{-2N}\zeta(2N+1).
\end{align}
{\rm(3)} For any $ N\in \mathbf{N}$,
\begin{equation}
\label{eq:Eisenstein basic1}
g(2N,\tau)=(\tau^{-2N}-1)\left(-\frac{1}{2}\frac{B_{2N}}{(2N)!}(2{\pi}i)^{2N}\right)+\frac{\pi{i}}{\tau}\delta_{1,N}.
\end{equation}
\end{lem}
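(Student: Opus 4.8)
The plan is to collapse the difference of two bilateral double zeta functions that defines $g$ into a single Barnes double zeta function, multiplied by a factor vanishing on the even integers, together with an elementary Riemann-zeta term. Once this closed form is available, each of the three parts becomes a routine special-value computation at $s=0$, $s=-2N$, and $s=2N$ respectively.

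First I would expand each bilateral zeta by part (2) of Lemma \ref{prop:bilateral zeta1}, obtaining
\begin{align}
\xi_{2}(s,\tau\mid e^{\pi i};\tau)&=\zeta_{2}(s,\tau\mid -1,\tau)+\zeta_{2}(s,\tau\mid 1,\tau)-\tau^{-s}\zeta(s),\nonumber\\
\xi_{2}(s,1\mid \tau;1)&=\zeta_{2}(s,1\mid \tau,1)+\zeta_{2}(s,1\mid -\tau,1)-\zeta(s).\nonumber
\end{align}
Next I would treat the four genuine double zetas with the tools already at hand, using freely that $\zeta_{2}$ is symmetric in its two periods: formula (\ref{eq:double zeta lem}) of the preceding lemma with periods $(1,\tau)$ gives $\zeta_{2}(s,\tau\mid 1,\tau)-\zeta_{2}(s,1\mid \tau,1)=(\tau^{-s}-1)\zeta(s)$; the multiplication formula (\ref{eq:Barnes multiplication}) with $\alpha=-1$ turns $\zeta_{2}(s,1\mid -\tau,1)$ into $e^{\pi i s}\zeta_{2}(s,-1\mid -1,\tau)$; and (\ref{eq:double zeta lem}) with periods $(-1,\tau)$ rewrites $\zeta_{2}(s,\tau\mid -1,\tau)=\zeta_{2}(s,-1\mid -1,\tau)-(e^{-\pi i s}-\tau^{-s})\zeta(s)$. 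Substituting these, I expect all the spurious $\zeta(s)$ contributions to telescope, leaving
\begin{equation}
g(s,\tau)=(1-e^{\pi i s})\,\zeta_{2}(s,-1\mid -1,\tau)+(\tau^{-s}-e^{-\pi i s})\,\zeta(s).\nonumber
\end{equation}
The delicate point here, and the main obstacle, is the branch bookkeeping that produces the phases $e^{\pm\pi i s}$ (in particular $(-1)^{-s}=e^{-\pi i s}$), together with the verification that every $\zeta_{2}$ meets condition {\bf[OC]} and that $\alpha=-1$ satisfies the hypotheses of Lemma \ref{prop:multiplication}; after that, the three parts are short.

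For part (1) I would differentiate at $s=0$. Both prefactors $1-e^{\pi i s}$ and $\tau^{-s}-e^{-\pi i s}$ vanish there, so the product rule collapses the derivative to $\partial_{s}g(0,\tau)=-\pi i\,\zeta_{2}(0,-1\mid -1,\tau)+(\pi i-\log\tau)\,\zeta(0)$. Then $\zeta_{2}(0,-1\mid -1,\tau)=\tfrac12 B_{2,2}(-1\mid -1,\tau)$ by (\ref{eq:BarnesSV}), which the reflection (\ref{eq:multiple Brnoulli4}) rewrites as $-\tfrac12 B_{2,2}(0\mid 1,\tau)$; reading $B_{2,2}(0\mid 1,\tau)$ off (\ref{eq:B_{2,2}}) and inserting $\zeta(0)=-\tfrac12$ gives (\ref{eq:eta basic1}). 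Part (2) is the identical differentiation at $s=-2N$: the factor $1-e^{\pi i s}$ again vanishes while now $\zeta(-2N)=0$, so only $-\pi i\,\zeta_{2}(-2N,-1\mid -1,\tau)$ and $(\tau^{2N}-1)\zeta'(-2N)$ survive. Applying (\ref{eq:BarnesSV}) with $m=2N+1$ and (\ref{eq:multiple Brnoulli4}) yields the $B_{2,2+2N}(0\mid 1,\tau)$ term, and the classical value $\zeta'(-2N)=\tfrac{(-1)^{N}(2N)!}{2(2\pi)^{2N}}\zeta(2N+1)$ yields the second term of (\ref{eq:Ramanujan basic1}).

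Finally, for part (3) I would evaluate the closed form at $s=2N$. The factor $1-e^{\pi i s}$ vanishes at $s=2N$, while $\zeta_{2}(s,-1\mid -1,\tau)$ is regular there for $N\ge 2$ and has only a simple pole at $s=2$; hence the first summand equals $\lim_{s\to 2N}(1-e^{\pi i s})\zeta_{2}(s,-1\mid -1,\tau)$, which by (\ref{eq:double zeta pole lem}) with $(\omega_{1},\omega_{2})=(-1,\tau)$ is $\tfrac{\pi i}{\tau}\delta_{1,N}$. The remaining summand $(\tau^{-2N}-1)\zeta(2N)$ becomes the stated Bernoulli expression after Euler's formula $\zeta(2N)=-\tfrac12\tfrac{B_{2N}}{(2N)!}(2\pi i)^{2N}$ is inserted, giving (\ref{eq:Eisenstein basic1}). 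The only subtlety in this last part is the $0\cdot\infty$ cancellation at $N=1$, which is exactly what (\ref{eq:double zeta pole lem}) is designed to resolve.
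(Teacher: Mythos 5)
Your proposal is correct and follows essentially the same route as the paper: both reduce $g(s,\tau)$ to the closed form $(1-e^{\pi i s})\,\zeta_{2}(s,e^{\pi i}\mid \tau,e^{\pi i})+(\tau^{-s}-e^{-\pi i s})\,\zeta(s)$ (the paper reaches it via the two-term expansion of $\xi_{2}$, so the $\zeta_{2}(s,1+\tau\mid\cdot)$ terms cancel outright, while you use the three-term expansion and let the extra $\zeta(s)$ contributions telescope) and then evaluate at $s=0,-2N,2N$ with exactly the same ingredients: (\ref{eq:BarnesSV}), (\ref{eq:multiple Brnoulli4}), (\ref{eq:B_{2,2}}), $\zeta(0)=-\tfrac12$, the functional-equation value of $\zeta'(-2N)$, Euler's formula for $\zeta(2N)$, and (\ref{eq:double zeta pole lem}) for the $0\cdot\infty$ limit at $N=1$. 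The branch bookkeeping you flag is handled correctly (e.g.\ $\alpha=e^{\pi i}$ meets the hypotheses of Lemma \ref{prop:multiplication}, and all periods satisfy {\bf[OC]}), so there is no gap.
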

\begin{proof}
By (\ref{eq:def bilateral}), we observe that
\begin{align}
g(s,\tau)&=\{\zeta_{2}(s,1+\tau\mid 1,\tau)+\zeta_{2}(s,\tau\mid e^{{\pi}i},\tau)\}
-\{\zeta_{2}(s,1+\tau\mid \tau,1)+\zeta_{2}(s,1\mid e^{-{\pi}i}\tau,1)\} \nonumber \\
&=\zeta_{2}(s,\tau\mid e^{{\pi}i},\tau)-\zeta_{2}(s,1\mid e^{-{\pi}i}\tau,1)
=\zeta_{2}(s,\tau\mid e^{{\pi}i},\tau)-e^{{\pi}is}\zeta_{2}(s,e^{{\pi}i}\mid \tau,e^{{\pi}i}). \nonumber
\end{align}
{\rm(1) :} 
Using the above relation, we have
\begin{equation}
\frac{\partial g}{\partial s}(0,\tau)
=-{\pi}i\zeta_{2}(0,e^{{\pi}i}\mid \tau,e^{{\pi}i})+\left.\frac{\partial }{\partial s}
\{\zeta_{2}(s,\tau\mid e^{{\pi}i},\tau)-\zeta_{2}(s,e^{{\pi}i}\mid \tau,e^{{\pi}i})\}\right|_{s=0}.\nonumber
\end{equation}
By (\ref{eq:BarnesSV}) and (\ref{eq:B_{2,2}})
\begin{equation}
-{\pi}i\zeta_{2}(0,e^{{\pi}i}\mid \tau,e^{{\pi}i})
=-{\pi}i\frac{B_{2,2}(e^{{\pi}i}\mid \tau,e^{{\pi}i})}{2!}
=\frac{{\pi}i}{4}+\frac{{\pi}i}{12}\left(\tau+\frac{1}{\tau}\right).\nonumber
\end{equation}
On the other hand, by (\ref{eq:double zeta lem}) and the fact $\zeta(0)=-\frac{1}{2}$, one finds 
\begin{align}
&\left.\frac{\partial }{\partial s}
\{\zeta_{2}(s,\tau\mid e^{{\pi}i},\tau)-\zeta_{2}(s,e^{{\pi}i}\mid \tau,e^{{\pi}i})\}\right|_{s=0}\nonumber \\
&=\left.\frac{\partial }{\partial s}\{(\tau^{-s}-e^{-{\pi}is})\zeta(s)\}\right|_{s=0}
=(-\log{\tau}+{\pi}i)\zeta(0)
=\frac{1}{2}\log{\tau}-\frac{{\pi}i}{2}.\nonumber
\end{align}
Consequently we obtain
\begin{equation}
\frac{\partial g}{\partial s}(0,\tau)
=-\frac{{\pi}i}{4}+\frac{{\pi}i}{12}\left(\tau+\frac{1}{\tau}\right)+\frac{1}{2}\log{\tau}.\nonumber
\end{equation}
{\rm(2) :} 
Similarly, we have 
\begin{align}
\frac{\partial g}{\partial s}(-2N,\tau)
=-{\pi}i\zeta_{2}(-2N,e^{{\pi}i}\mid \tau,e^{{\pi}i})+\!\!\left.\frac{\partial }{\partial s}\{\zeta_{2}(s,\tau\mid e^{{\pi}i},\tau)-\zeta_{2}(s,e^{{\pi}i}\mid \tau,e^{{\pi}i})\}\right|_{s=-2N}.\nonumber
\end{align}
By (\ref{eq:BarnesSV}) and Lemma \ref{prop:multiple Brnoulli1}, we have 
\begin{equation}
-{\pi}i\zeta_{2}(-2N,e^{{\pi}i}\mid \tau,e^{{\pi}i})
=-{\pi}i\frac{B_{2,2+2N}(e^{{\pi}i}\mid \tau,e^{{\pi}i})}{(2N+2)(2N+1)}
=\frac{{\pi}iB_{2,2+2N}(0\mid 1,\tau)}{(2N+2)(2N+1)}.\nonumber
\end{equation}
Also, by the fact $\zeta(-2N)=0$ for any $N\in \mathbf{N}$, we have
\begin{align}
\left.\frac{\partial }{\partial s}
\{\zeta_{2}(s,\tau\mid e^{{\pi}i},\tau)-\zeta_{2}(s,e^{{\pi}i}\mid \tau,e^{{\pi}i})\}\right|_{s=-2N}
&=\left.\frac{\partial }{\partial s}\{(\tau^{-s}-e^{-{\pi}is})\zeta(s)\}\right|_{s=-2N}\nonumber \\
&=(\tau^{2N}-1)\frac{\partial \zeta}{\partial s}(-2N).\nonumber
\end{align}
Recall the functional equation of the Riemann zeta function.
\begin{equation}
\zeta(s)=2\Gamma(1-s)\sin\left(\frac{\pi{s}}{2}\right)(2\pi)^{s-1}\zeta(1-s).\nonumber
\end{equation}
Hence, it follows that 
\begin{equation}
\frac{\partial \zeta}{\partial s}(-2N)=\frac{(-1)^N}{2}(2N)!(2\pi)^{-2N}\zeta(2N+1).\nonumber
\end{equation}
Therefore,
\begin{align}
\frac{\partial g}{\partial s}(-2N,\tau)
=\frac{{\pi}iB_{2,2+2N}(0\mid 1,\tau)}{(2N+2)(2N+1)}+\frac{(-1)^N}{2}(\tau^{2N}-1)(2N)!(2\pi)^{-2N}\zeta(2N+1).\nonumber
\end{align}
{\rm(3) :}
We remark that 
\begin{align}
\zeta_{2}(s,\tau\mid e^{{\pi}i},\tau)-e^{{\pi}is}\zeta_{2}(s,e^{{\pi}i}\mid \tau,e^{{\pi}i})
&=\{\zeta_{2}(s,\tau\mid e^{{\pi}i},\tau)-\zeta_{2}(s,e^{{\pi}i}\mid \tau,e^{{\pi}i})\} \nonumber \\
{} & \quad+(1-e^{{\pi}is})\zeta_{2}(s,e^{{\pi}i}\mid \tau,e^{{\pi}i}). \nonumber
\end{align}
Therefore, by (\ref{eq:double zeta lem}), (\ref{eq:double zeta pole lem}) and $\zeta(2N)=-\frac{1}{2}\frac{B_{2N}}{(2N)!}(2{\pi}i)^{2N}$, we have
\begin{align}
g(2N,\tau)
&=\{\zeta_{2}(2N,\tau\mid e^{{\pi}i},\tau)-\zeta_{2}(2N,e^{{\pi}i}\mid \tau,e^{{\pi}i})\}
+\lim_{s\to 2N}(1-e^{{\pi}is})\zeta_{2}(s,e^{{\pi}i}\mid \tau,e^{{\pi}i}) \nonumber \\
&=(\tau^{-2N}-1)\left(-\frac{1}{2}\frac{B_{2N}}{(2N)!}(2{\pi}i)^{2N}\right)+\frac{\pi{i}}{\tau}\delta_{1,N}.\nonumber
\end{align}
\end{proof}

\begin{prop}
\label{prop:eta&Ramanujan}
{\rm(1)} (Inversion formula for the Dedekind $\eta$-function)
Let
\begin{equation}
\eta(\tau)
:=e^{\frac{{\pi}i}{12}\tau}\prod _{m=1}^{\infty }{(1-e^{2{\pi}im\tau})}
\end{equation}
be the Dedekind $\eta$-function. We have 
\begin{equation}
\label{eq:eta inversion}
\eta\left(e^{{\pi}i} \frac{1}{\tau}\right)
=\sqrt[]{\frac{\tau}{i}}
\,\eta(\tau).
\end{equation}
Here $\sqrt{\frac{\tau}{i}}$ being that branch taking the value $1$ at $\tau=i$.\\
{\rm{(2)}} (Ramanujan's formula) For any $N\in \mathbf{N}$, we have
\begin{align}
\label{eq:Ramanujan's formula}
\frac{1}{2}\zeta(2N+1)+\sum _{n=1}^{\infty }\frac{1}{n^{2N+1}}\frac{e^{2{\pi}in\tau}}{1-e^{2{\pi}in\tau}}
&=\tau^{2N}\left\{\frac{1}{2}\zeta(2N+1)+\sum _{n=1}^{\infty }\frac{1}{n^{2N+1}}\frac{e^{-2{\pi}in\frac{1}{\tau}}}{1-e^{-2{\pi}in\frac{1}{\tau}}}\right\} \\
{} & \quad+\frac{1}{2}\frac{(2\pi{i})^{2N+1}}{(2N+2)!}B_{2,2+2N}(0\mid \tau,1).\nonumber
\end{align}
{\rm(3)} (Inversion formula for the Eisenstein series/Lambert series) For any $N\in \mathbf{N}$, we have
\begin{equation}
\label{eq:Eisenstein inversion}
\sum _{n=1}^{\infty }\frac{n^{2N-1}e^{-2{\pi}in\frac{1}{\tau}}}{1-e^{-2{\pi}in\frac{1}{\tau}}}-\frac{B_{2N}}{4N}
=\tau^{2N}\left(\sum _{n=1}^{\infty }\frac{n^{2N-1}e^{2{\pi}in\tau}}{1-e^{2{\pi}in\tau}}-\frac{B_{2N}}{4N}\right)-\frac{\tau}{4\pi{i}}\delta_{1,N}.
\end{equation}
\end{prop}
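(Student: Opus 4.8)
The plan is to prove all three parts by the same device: evaluate the auxiliary function $g(s,\tau)$ of Lemma~\ref{prop:eta basic} (or its $s$-derivative) at the special arguments $s=0$, $s=-2N$, $s=2N$ in \emph{two} ways. One value is the closed form already recorded in Lemma~\ref{prop:eta basic}; the other comes from the Fourier and product expansions of Section~4, namely Theorem~\ref{thm:multiple basic formula} and Corollary~\ref{cor:q-fact}. Equating the two produces the stated identities. To organize the computation I would set $A(s):=\xi_{2}(s,\tau\mid e^{\pi i};\tau)$ and $B(s):=\xi_{2}(s,e^{\pi i}/\tau\mid e^{\pi i};e^{\pi i}/\tau)$, so that by Corollary~\ref{cor:bilateral multiplication2} one has $g(s,\tau)=A(s)-(e^{\pi i}/\tau)^{s}B(s)$. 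Since $\tau\in\mathfrak{H}$ forces $e^{\pi i}/\tau\in\mathfrak{H}$, both $A$ and $B$ satisfy the hypotheses of Theorem~\ref{thm:multiple basic formula} and Corollary~\ref{cor:q-fact}; moreover Corollary~\ref{cor:bilateral zero} gives $A(1-m)=B(1-m)=0$ for all $m\in\mathbf{N}$, which annihilates the cross terms when one differentiates $(e^{\pi i}/\tau)^{s}B(s)$ at $s=0$ and $s=-2N$.

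For part (1) I would compute $\tfrac{\partial g}{\partial s}(0,\tau)=A'(0)-B'(0)$ using the $r=1$ instance of (\ref{eq:bilateral q-fact}). A short product rearrangement gives $\exp(-A'(0))=\prod_{m\geq1}(1-e^{2\pi i m\tau})=e^{-\pi i\tau/12}\eta(\tau)$ and $\exp(-B'(0))=\prod_{m\geq1}(1-e^{-2\pi i m/\tau})=e^{\pi i/(12\tau)}\eta(e^{\pi i}/\tau)$, so that $\tfrac{\partial g}{\partial s}(0,\tau)=\tfrac{\pi i}{12}(\tau+1/\tau)+\log\bigl(\eta(e^{\pi i}/\tau)/\eta(\tau)\bigr)$. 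Comparing with (\ref{eq:eta basic1}) leaves $\log\bigl(\eta(e^{\pi i}/\tau)/\eta(\tau)\bigr)=-\tfrac{\pi i}{4}+\tfrac12\log\tau$, i.e. the ratio equals $e^{-\pi i/4}\tau^{1/2}=\sqrt{\tau/i}$, which is (\ref{eq:eta inversion}). The branch assertion is confirmed by noting the ratio equals $1$ at $\tau=i$.

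For part (2) I would evaluate $\tfrac{\partial g}{\partial s}(-2N,\tau)=A'(-2N)-\tau^{2N}B'(-2N)$ (using $(e^{\pi i}/\tau)^{-2N}=\tau^{2N}$) via the general-$m$ formula in Corollary~\ref{cor:q-fact} with $m=2N+1$; this yields the two Lambert-type sums appearing in (\ref{eq:Ramanujan's formula}) carrying the prefactor $(2N)!\,(2\pi i)^{-2N}$. Setting this equal to (\ref{eq:Ramanujan basic1}), using $(2\pi i)^{-2N}=(-1)^N(2\pi)^{-2N}$ and the symmetry $B_{2,2+2N}(0\mid1,\tau)=B_{2,2+2N}(0\mid\tau,1)$, reorganizes everything into Ramanujan's formula, the only nontrivial bookkeeping being the Bernoulli coefficient, which reduces to the elementary identity $\pi i(-1)^N(2\pi)^{2N}=\tfrac12(2\pi i)^{2N+1}$. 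Part (3) is analogous but uses $g$ itself: I would write $g(2N,\tau)=A(2N)-\tau^{-2N}B(2N)$ and plug in the Fourier expansion (\ref{eq:multiple basic formula}) at $s=2N$ (where $\Gamma(2N)=(2N-1)!$ is finite and the series converge since $\tau\in\mathfrak{H}$), obtaining the two Eisenstein/Lambert sums with prefactor $(-1)^N(2\pi)^{2N}/(2N-1)!$. Equating with (\ref{eq:Eisenstein basic1}) and simplifying through $(2N-1)!/(2N)!=1/(2N)$ and $(2\pi i)^{2N}=(-1)^N(2\pi)^{2N}$ rearranges into (\ref{eq:Eisenstein inversion}), with the $\delta_{1,N}$ term tracked through its coefficient at $N=1$.

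The main obstacle I anticipate is not conceptual but arithmetic: keeping the phases straight throughout, that is, correctly propagating the powers of $e^{\pi i}$, $i$, and $2\pi i$, and using the correct branch of $z^{-s}$ in each bilateral factor (as fixed by the condition \textbf{[SOC]} already verified in Section~4). No new analytic input is required beyond Lemma~\ref{prop:eta basic} and the Section~4 expansions; in particular the $\delta_{1,N}$ contribution in (\ref{eq:Eisenstein basic1}) originates from the simple pole of the double Barnes zeta function at $s=2$, which is already packaged inside Lemma~\ref{prop:eta basic}(3). The entire content of the proof is thus the matching of these constants.
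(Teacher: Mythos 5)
Your proposal is correct and takes essentially the same route as the paper's own proof: both evaluate $g(s,\tau)$ of Lemma \ref{prop:eta basic} in two ways at $s=0$, $s=-2N$, $s=2N$ --- the closed forms (\ref{eq:eta basic1}), (\ref{eq:Ramanujan basic1}), (\ref{eq:Eisenstein basic1}) on one side and Corollary \ref{cor:q-fact} (resp.\ Theorem \ref{thm:multiple basic formula} at $s=2N$) on the other, with Corollary \ref{cor:bilateral zero} killing the cross terms from differentiating $(e^{\pi i}/\tau)^{s}$. Your constant matching, e.g.\ $\pi i(-1)^{N}(2\pi)^{2N}=\tfrac{1}{2}(2\pi i)^{2N+1}$, the symmetry $B_{2,2+2N}(0\mid 1,\tau)=B_{2,2+2N}(0\mid \tau,1)$, and the identification $e^{-\pi i/4}\tau^{1/2}=\sqrt{\tau/i}$, is exactly the bookkeeping the paper performs.
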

\begin{proof}
{\rm(1) :}
By Corollary \ref{cor:bilateral zero} and Corollary \ref{cor:q-fact}, we have
\begin{align}
\exp\left(\frac{\partial g}{\partial s}(0,\tau)\right)
=\exp\left(\frac{\partial \xi_{2}}{\partial s}(0,\tau\mid e^{{\pi}i};\tau)
-\frac{\partial \xi_{2}}{\partial s}\left(0,e^{{\pi}i}\frac{1}{\tau}\,\bigg\vert \, e^{{\pi}i};e^{{\pi}i}\frac{1}{\tau}\right)\right)
=\frac{h\left(e^{{\pi}i}\frac{1}{\tau}\right)}{h(\tau)}.\nonumber
\end{align}
Here, $h(\tau):=\prod _{m=1}^{\infty }{(1-e^{2{\pi}im\tau})}$. 
On the other hand, by (\ref{eq:eta basic1})
\begin{equation}
\exp\left(\frac{\partial g}{\partial s}(0,\tau)\right)
=\exp\left(-\frac{{\pi}i}{4}+\frac{{\pi}i}{12}\left(\tau+\frac{1}{\tau}\right)+\frac{1}{2}\log{\tau}\right).\nonumber
\end{equation}
Therefore,
\begin{equation}
e^{\frac{{\pi}i}{12}\frac{e^{{\pi}i}}{\tau}}h\left(e^{{\pi}i}\frac{1}{\tau}\right)
=\exp\left(\frac{1}{2}\log{\tau}-\frac{{\pi}i}{4}\right)e^{\frac{{\pi}i}{12}\tau}h(\tau).\nonumber
\end{equation}
Since $\eta(\tau)=e^{\frac{{\pi}i}{12}\tau}h(\tau)$, we obtain (\ref{eq:eta inversion}).\\
{\rm(2) :}
By Corollary \ref{cor:bilateral zero} and Corollary \ref{cor:q-fact}, we have
\begin{align}
\frac{\partial g}{\partial s}(-2N,\tau)
&=\frac{(2N)!}{(2{\pi}i)^{2N}}\sum _{n=1}^{\infty }\frac{1}{n^{2N+1}}\frac{e^{2{\pi}in\tau}}{1-e^{2{\pi}in\tau}}
-\tau^{2N}\frac{(2N)!}{(2{\pi}i)^{2N}}\sum _{n=1}^{\infty }\frac{1}{n^{2N+1}}\frac{e^{-2{\pi}in\frac{1}{\tau}}}{1-e^{-2{\pi}in\frac{1}{\tau}}}.\nonumber
\end{align}
Hence we obtain (\ref{eq:Ramanujan's formula}) by (\ref{eq:Ramanujan basic1}).\\
{\rm(3) :}
By (\ref{eq:multiple basic formula}), we have
\begin{align}
g(2N,\tau)
&=\frac{(2{\pi}i)^{2N}}{(2N-1)!}\sum _{n=1}^{\infty }\frac{n^{2N-1}e^{2{\pi}in\tau}}{1-e^{2{\pi}in\tau}}
-\tau^{-2N}\frac{(2{\pi}i)^{2N}}{(2N-1)!}\sum _{n=1}^{\infty }\frac{n^{2N-1}e^{-2{\pi}in\frac{1}{\tau}}}{1-e^{-2{\pi}in\frac{1}{\tau}}}.\nonumber
\end{align}
Hence we obtain (\ref{eq:Eisenstein inversion}) by (\ref{eq:Eisenstein basic1}).
\end{proof}
\subsection{Fourier expansion of the Barnes zeta function}
\begin{thm}
\label{thm:Barnes zeta F.E}
Let $r\geq 2$, $z\in D$. We assume that $z,\omega_{1},\cdots,\omega_{r}\in \mathfrak{H}$ satisfy the condition {\bf[{ORC}]}.
Then, for all $s\in \mathbf{C}$, we obtain
\begin{align}
\label{eq:Barnes & Bilateral}
\zeta_{r}(s,z\mid {\boldsymbol{\omega}})
&=\frac{1}{2i\sin({\pi}s)} \\
{} & \quad \cdot\! \left\{
\sum _{k=1}^{r}(-1)^{k-1}\xi_{r}(s,\vert {\boldsymbol{\omega}}\vert^{\,+}_{[1,k-1]}+e^{-{\pi}i}z\mid \omega_{k};\widehat{{\boldsymbol{\omega}}}^{-}[k,r](k))\right. \nonumber \\
{} & \quad \left.-\sum _{k=1}^{r}(-1)^{r-k}e^{-{\pi}is}\xi_{r}(s,z+e^{-{\pi}i}\vert {\boldsymbol{\omega}}\vert^{\,+}_{[k+1,r]}\mid \omega_{k};\widehat{{\boldsymbol{\omega}}}^{-}[k,r](k))\right\} \nonumber \\
\label{eq:Barnes F.E}
&=(2\pi)^{s-1}\Gamma(1-s) \\
{} & \quad \cdot\! \left\{e^{\frac{\pi}{2}i(s-1)}\sum _{k=1}^{r}\omega_{k}^{-s}\sum _{n=1}^{\infty }
n^{s-1}e^{2{\pi}inz_{k}}\!\!\prod _{j=1,\,j\neq k}^{r}\!\!(1-e^{2{\pi}in\omega_{jk}})^{-1}\right. \nonumber \\
{} & \quad \left.+e^{-\frac{\pi}{2}i(s-1)}\sum _{k=1}^{r}\omega_{k}^{-s}\sum _{n=1}^{\infty }
n^{s-1}e^{-2{\pi}inz_{k}}\!\!\prod _{j=1,\,j\neq k}^{r}\!\!(1-e^{-2{\pi}in\omega_{jk}})^{-1}\right\}.\nonumber
\end{align}
\end{thm}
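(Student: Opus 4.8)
The plan is to recognise the braced expression in $(\ref{eq:Barnes & Bilateral})$ as $f_+(s,z\mid\boldsymbol{\omega})-e^{-\pi i s}f_-(s,z\mid\boldsymbol{\omega})$ and thereby reduce the first equality to a single identity purely among Barnes zeta functions; the second equality will then follow by inserting the explicit Fourier expansions already recorded in \Thm{bilateral relation}. Since $z\in D$ we have both $z\in D\cup D_+$ and $z\in D\cup D_-$, so by $(\ref{eq:f_{+}xi})$ and $(\ref{eq:f_{-}xi})$ the first sum in $(\ref{eq:Barnes & Bilateral})$ equals $f_+(s,z\mid\boldsymbol{\omega})$ and the second equals $e^{-\pi i s}f_-(s,z\mid\boldsymbol{\omega})$ for all $s\in\mathbf{C}$. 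Hence the first equality is equivalent to
\begin{equation}
2i\sin(\pi s)\,\zeta_r(s,z\mid\boldsymbol{\omega})=f_+(s,z\mid\boldsymbol{\omega})-e^{-\pi i s}f_-(s,z\mid\boldsymbol{\omega}).\nonumber
\end{equation}

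To prove this I would expand $f_\pm$ through their two-term definitions and apply the Barnes multiplication formula $(\ref{eq:Barnes multiplication})$ with $\alpha=e^{-\pi i}$, which converts each $e^{-\pi i}\boldsymbol{\omega}$-argument into a $\boldsymbol{\omega}$-argument at the cost of a factor $e^{\pi i s}$. The term $\zeta_r(s,e^{-\pi i}z\mid e^{-\pi i}\boldsymbol{\omega})$ in $f_+$ becomes $e^{\pi i s}\zeta_r(s,z\mid\boldsymbol{\omega})$, while the leading term of $-e^{-\pi i s}f_-$ contributes $-e^{-\pi i s}\zeta_r(s,z\mid\boldsymbol{\omega})$; their sum is $(e^{\pi i s}-e^{-\pi i s})\zeta_r=2i\sin(\pi s)\zeta_r$. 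The remaining two ``reflected'' terms both evaluate $\zeta_r(\,\cdot\mid\boldsymbol{\omega})$ at the single point $|\boldsymbol{\omega}|^{+}-z$ and cancel: because $z=\sum a_k\omega_k\in D$ forces $|\boldsymbol{\omega}|^{+}-z=\sum(1-a_k)\omega_k\in D$, this point lies in the principal cone where $\zeta_r$ is single valued, so the two argument-tracked labels $|\boldsymbol{\omega}|^{+}+e^{-\pi i}z$ and $|\boldsymbol{\omega}|^{+}+e^{\pi i}z$ denote the same value. I would carry out this computation first for $\Re(s)$ large, where every series converges, and then extend to all $s\in\mathbf{C}$ by analytic continuation, using that $f_\pm$, being finite sums of bilateral zeta functions, are entire in $s$ by \Cor{bilateral zero}, whereas $\zeta_r$ is meromorphic, so both sides are meromorphic and agree on an open set.

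For the second equality I would substitute the Fourier expansions $(\ref{eq:f_{+}basic})$ and $(\ref{eq:f_{-}basic})$, which converge absolutely for $z\in D$ under {\bf[ORC]} by Lemma~\ref{prop:ext 2}, into the identity above. Factoring out $(2\pi)^s/\Gamma(s)$ and combining the prefactor $e^{-\pi i s}$ of $f_-$ with its $e^{\frac{\pi}{2}i s}$ to produce $e^{-\frac{\pi}{2}i s}$, it remains to simplify $\frac{1}{2i\sin(\pi s)}\cdot\frac{(2\pi)^s}{\Gamma(s)}$. By the reflection formula $\Gamma(s)\Gamma(1-s)=\pi/\sin(\pi s)$ this equals $(2\pi)^{s-1}\Gamma(1-s)/i$, and absorbing $1/i=e^{-\frac{\pi}{2}i}$ into the two exponentials turns $e^{\frac{\pi}{2}i s}$ and $e^{-\frac{\pi}{2}i s}$ into $e^{\frac{\pi}{2}i(s-1)}$ and $e^{-\frac{\pi}{2}i(s-1)}$ respectively, reproducing $(\ref{eq:Barnes F.E})$ verbatim.

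The main obstacle I anticipate is entirely in the branch bookkeeping of the middle paragraph: one must check the angle hypotheses of $(\ref{eq:Barnes multiplication})$ at each use of $\alpha=e^{-\pi i}$ and, above all, justify rigorously that the two reflected Barnes zetas coincide rather than differing by a monodromy factor. This is precisely the step that fails outside $D$, so the restriction $z\in D$ — guaranteeing that the reflection $z\mapsto|\boldsymbol{\omega}|^{+}-z$ preserves membership in the cone where $\zeta_r$ is single valued — is the crucial hypothesis that makes the cancellation legitimate.
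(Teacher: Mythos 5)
Your proposal is correct and takes essentially the same route as the paper's proof: the paper likewise introduces $F(s,z\mid{\boldsymbol{\omega}}):=f_{+}(s,z\mid{\boldsymbol{\omega}})-e^{-\pi is}f_{-}(s,z\mid{\boldsymbol{\omega}})$, identifies the braced expression via \Thm{bilateral relation}, derives $F=2i\sin(\pi s)\,\zeta_{r}(s,z\mid{\boldsymbol{\omega}})$ from the two-term definitions of $f_{\pm}$ together with the multiplication formula (\ref{eq:Barnes multiplication}), and concludes (\ref{eq:Barnes F.E}) by the reflection formula for the Gamma function. Your branch-bookkeeping justification for the cancellation of the reflected terms is exactly the paper's one-line observation that $z\in D$ gives $e^{\pi i}(z+e^{-\pi i}\vert{\boldsymbol{\omega}}\vert^{+})=\vert{\boldsymbol{\omega}}\vert^{+}+e^{-\pi i}z$, so no gap remains.
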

\begin{proof}
Put
\begin{equation}
F(s,z\mid {\boldsymbol{\omega}}):=f_{+}(s,z\mid {\boldsymbol{\omega}})-e^{-\pi{i}s}f_{-}(s,z\mid {\boldsymbol{\omega}}).\nonumber
\end{equation}
Let $z\in D$. Then, by Theorem \ref{thm:bilateral relation}, we have 
\begin{align}
F(s,z\mid \boldsymbol{\omega})
&=\sum _{k=1}^{r}(-1)^{k-1}\xi_{r}(s,\vert {\boldsymbol{\omega}}\vert^{\,+}_{[1,k-1]}+e^{-{\pi}i}z\mid \omega_{k};\widehat{{\boldsymbol{\omega}}}^{-}[k,r](k)) \nonumber \\
{} & \quad -\sum _{k=1}^{r}(-1)^{r-k}e^{-{\pi}is}\xi_{r}(s,z+e^{-{\pi}i}\vert {\boldsymbol{\omega}}\vert^{\,+}_{[k+1,r]}\mid \omega_{k};\widehat{{\boldsymbol{\omega}}}^{-}[k,r](k)) \nonumber \\
&=\frac{(2\pi)^s}{\Gamma(s)}
\left\{e^{\frac{\pi}{2}is}\sum _{k=1}^{r}\omega_{k}^{-s}\sum _{n=1}^{\infty }
n^{s-1}e^{2{\pi}inz_{k}}\!\!\prod _{j=1,\,j\neq k}^{r}\!\!(1-e^{2{\pi}in\omega_{jk}})^{-1}\right.\nonumber \\
{} & \quad \left.-e^{-\frac{\pi}{2}is}\sum _{k=1}^{r}\omega_{k}^{-s}\sum _{n=1}^{\infty }
n^{s-1}e^{-2{\pi}inz_{k}}\!\!\prod _{j=1,\,j\neq k}^{r}\!\!(1-e^{-2{\pi}in\omega_{jk}})^{-1}\right\}.\nonumber
\end{align}
On the other hand, by the definitions of $f_{+}(s,z\mid {\boldsymbol{\omega}})$ and $f_{-}(s,z\mid {\boldsymbol{\omega}})$,
\begin{align}
F(s,z\mid {\boldsymbol{\omega}})
&=(e^{{\pi}is}-e^{-{\pi}is})\zeta_{r}(s,z\mid {\boldsymbol{\omega}})\nonumber \\
{} & \quad+(-1)^{r-1}(\zeta_{r}(s,\vert {\boldsymbol{\omega}}\vert^{+}+e^{-{\pi}i}z\mid {\boldsymbol{\omega}})
-e^{-{\pi}is}\zeta_{r}(s, z+e^{-{\pi}i}\vert {\boldsymbol{\omega}}\vert^{+} \mid e^{-{\pi}i}{\boldsymbol{\omega}})).\nonumber
\end{align}
Since $z\in D$, one has $e^{{\pi}i}(z+e^{-{\pi}i}\vert {\boldsymbol{\omega}}\vert^{+})= \vert{\boldsymbol{\omega}}\vert^{+}+e^{-{\pi}i}z$. 
By Lemma \ref{prop:multiplication} we have 
\begin{align}
e^{-{\pi}is}\zeta_{r}(s, z+e^{-{\pi}i}\vert {\boldsymbol{\omega}}\vert^{+} \mid e^{-{\pi}i}{\boldsymbol{\omega}})
=&\zeta_{r}(s, e^{{\pi}i}(z+e^{-{\pi}i}\vert {\boldsymbol{\omega}}\vert^{+}) \mid {\boldsymbol{\omega}})\nonumber \\
=&\zeta_{r}(s,\vert {\boldsymbol{\omega}}\vert^{+}+e^{-{\pi}i}z\mid {\boldsymbol{\omega}}).\nonumber
\end{align}
Therefore,
\begin{equation}
F(s,z\mid {\boldsymbol{\omega}})
=(e^{{\pi}is}-e^{-{\pi}is})\zeta_{r}(s,z\mid {\boldsymbol{\omega}})
=2i\sin({\pi}s)\zeta_{r}(s,z\mid {\boldsymbol{\omega}}).\nonumber
\end{equation}
Hence, we have (\ref{eq:Barnes & Bilateral}). Moreover it is obvious from Theorem \ref{thm:bilateral relation} that  
\begin{align}
\zeta_{r}(s,z\mid {\boldsymbol{\omega}})
&=\frac{(2\pi)^s}{2i\Gamma(s)\sin({\pi}s)}
\left\{e^{\frac{\pi}{2}is}\sum _{k=1}^{r}\omega_{k}^{-s}\sum _{n=1}^{\infty }
n^{s-1}e^{2{\pi}inz_{k}}\!\!\prod _{j=1,\,j\neq k}^{r}\!\!(1-e^{2{\pi}in\omega_{jk}})^{-1}\right.\nonumber \\
{} & \quad \left.-e^{-\frac{\pi}{2}is}\sum _{k=1}^{r}\omega_{k}^{-s}\sum _{n=1}^{\infty }
n^{s-1}e^{-2{\pi}inz_{k}}\!\!\prod _{j=1,\,j\neq k}^{r}\!\!(1-e^{-2{\pi}in\omega_{jk}})^{-1}\right\}.\nonumber
\end{align}
Hence (\ref{eq:Barnes F.E}) follows from the reflection formula for the Gamma function.
\end{proof}
\begin{chu}
The formula (\ref{eq:Barnes F.E}) has given by Komuri, Matsumoto and Tsumura \cite{KMT}. 
In particular, when $r=1$, $\Re(s)<0$ and $z=a\omega_{1}$ ($0<a<1$), the formula (\ref{eq:Barnes F.E}) gives the following functional equation of Hurwitz's zeta function.
\begin{align}
\zeta_{1}(s,z\mid \omega_{1})
&=\frac{1}{2i\sin({\pi}s)}
\left\{\xi_{1}(s,e^{-{\pi}i}z\mid \omega_{1})
-e^{-{\pi}is}\xi_{1}(s,z\mid \omega_{1})\right\} \\
&=(2\pi)^{s-1}\Gamma(1-s)\omega_{1}^{-s} \\
{} & \quad \cdot\!
\left\{e^{\frac{\pi}{2}i(s-1)}\sum _{n=1}^{\infty }n^{s-1}e^{2{\pi}inz_{1}}
+e^{-\frac{\pi}{2}i(s-1)}\sum _{n=1}^{\infty }n^{s-1}e^{-2{\pi}inz_{1}}\right\}.\nonumber
\end{align}
\end{chu}
\begin{cor}
We assume that $r\geq 2$, $z\in D$ and $\omega_{1},\cdots,\omega_{r}$ satisfy the condition {\bf[{ORC}]}. Then\\
{\rm(1)} For all $m\in \mathbf{N}_{0}$, 
\begin{align}
\label{eq:multiple Bernoulli F.E}
B_{r,m}(z\mid {\boldsymbol{\omega}})&=(-1)^{r}(2{\pi}i)^{r-1-m}m!\sum _{k=1}^{r}\omega_{k}^{m-r}
\!\!\sum _{n\in \mathbf{Z}\backslash \{0\}}\!\!n^{r-1-m}e^{2{\pi}inz_{k}}\!\!\prod _{j=1,\,j\neq k}^{r}\!\!(1-e^{2{\pi}in\omega_{jk}})^{-1}.
\end{align}
{\rm(2)} 
For all $m\in \mathbf{N}$, 
\begin{equation}
\label{eq:multiple Bernoulli F.E2}
\sum_{k=1}^{r}\omega_{k}^{-(r+m)}\sum _{n\in \mathbf{Z}\backslash \{0\}}
n^{r+m-1}e^{2{\pi}inz_{k}}\!\!\prod _{j=1,\,j\neq k}^{r}\!\!(1-e^{2{\pi}in\omega_{jk}})^{-1}=0.
\end{equation}
\end{cor}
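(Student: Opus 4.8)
The plan is to read off both identities directly from the Fourier expansion \eqref{eq:Barnes F.E} of Theorem \ref{thm:Barnes zeta F.E} by specializing the variable $s$ to integers and comparing the pole structures of the two sides. Write $\Phi(s)$ for the factor in braces on the right of \eqref{eq:Barnes F.E}, so that $\zeta_{r}(s,z\mid {\boldsymbol{\omega}})=(2\pi)^{s-1}\Gamma(1-s)\Phi(s)$. The first thing I would record is that $\Phi(s)$ is \emph{entire} in $s$: for $z\in D$, Lemma \ref{lem:f_{+}f_{-}uhp} places each relevant shifted argument in $\mathfrak{H}$, so every inner series $\sum_{n\ge 1}n^{s-1}e^{\pm 2\pi i n z_{k}}\prod_{j\neq k}(1-e^{\pm 2\pi i n\omega_{jk}})^{-1}$ converges absolutely for all $s$ and is holomorphic there.

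The second preliminary step is to fold the two one-sided sums in $\Phi(s)$ into a single bilateral sum whenever $s$ is an integer. If $s-1\in\mathbf{Z}$ then $(-n)^{s-1}=(-1)^{s-1}n^{s-1}$ unambiguously, and since $e^{\pm\frac{\pi}{2}i(s-1)}=i^{\pm(s-1)}$, replacing $n$ by $-n$ in the second sum gives
\[
\Phi(s)=i^{s-1}\sum_{k=1}^{r}\omega_{k}^{-s}\sum_{n\in\mathbf{Z}\setminus\{0\}}n^{s-1}e^{2\pi i n z_{k}}\prod_{j=1,\,j\neq k}^{r}(1-e^{2\pi i n\omega_{jk}})^{-1}.
\]
Thus at integer $s$ the brace in \eqref{eq:Barnes F.E} equals $i^{s-1}$ times the double sum appearing in \eqref{eq:multiple Bernoulli F.E} and \eqref{eq:multiple Bernoulli F.E2}.

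For part (1) I would put $s=r-m$ and distinguish two regimes. When $m\ge r$, so $s\le 0$, both $\zeta_{r}(s,\cdot)$ and $\Gamma(1-s)$ are regular; evaluating \eqref{eq:Barnes F.E} there and inserting the special value \eqref{eq:BarnesSV} for $\zeta_{r}(s,z\mid {\boldsymbol{\omega}})$ yields \eqref{eq:multiple Bernoulli F.E} after collecting the elementary factors $(2\pi)^{s-1}i^{s-1}=(2\pi i)^{r-1-m}$ and cancelling the common factorial. When $0\le m<r$, so $p:=r-m\in\{1,\dots,r\}$, both $\zeta_{r}$ (through the residue formula \eqref{eq:BarnesSV2}) and $\Gamma(1-s)$ have a simple pole, while $(2\pi)^{s-1}\Phi(s)$ stays regular; equating the residues of the two sides at $s=p$, using $\mathrm{Res}_{s=p}\Gamma(1-s)=(-1)^{p}/(p-1)!$, again reproduces \eqref{eq:multiple Bernoulli F.E}, the Bernoulli factor $B_{r,r-p}=B_{r,m}$ matching exactly.

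For part (2) I would instead take $s=r+m$ with $m\ge 1$, so $s>r$. Here $\zeta_{r}(s,z\mid {\boldsymbol{\omega}})$ lies in its domain of absolute convergence and is finite, whereas $\Gamma(1-s)$ has a simple pole; since $\Phi(s)$ is entire, the only way \eqref{eq:Barnes F.E} can hold with a finite left-hand side is for $\Phi(s)$ to vanish at $s=r+m$, and by the bilateral rewriting above this vanishing is precisely \eqref{eq:multiple Bernoulli F.E2}. The one point demanding care throughout is the sign-and-phase bookkeeping in the bilateral combination, together with the verification that $\Phi(s)$ is genuinely holomorphic at the integers in question; once the entireness (via Lemma \ref{lem:f_{+}f_{-}uhp}) and the residue of $\Gamma$ are in hand, both identities drop out as value/residue comparisons of the single meromorphic identity \eqref{eq:Barnes F.E}.
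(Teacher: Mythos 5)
Your proposal is correct and follows essentially the same route as the paper: the paper likewise reads (\ref{eq:multiple Bernoulli F.E}) off from (\ref{eq:Barnes F.E}) by folding the two one-sided sums into a bilateral sum at integer $s$, using the special values (\ref{eq:BarnesSV}) at $s=1-m$ for the range $m\ge r$ and the residues (\ref{eq:BarnesSV2}) at $s=1,\dots,r$ for $0\le m<r$, and proves (2) by observing that $\zeta_{r}(s+r,z\mid{\boldsymbol{\omega}})/\Gamma(1-r-s)\to 0$ while the entire bracket factor computes that limit --- exactly your pole-of-$\Gamma(1-s)$ argument. The only cosmetic difference is that the paper prefixes part (1) with an analytic-continuation remark (both sides are, respectively, rational and absolutely convergent under the weaker condition $\arg(\omega_{j})\neq\arg(\omega_{k})$), a step your argument does not need under the stated {\bf[ORC]} hypothesis.
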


\begin{proof}
{\rm(1) :}
We notice that the left-hand side of (\ref{eq:multiple Bernoulli F.E}) is a multiple Bernoulli polynomial, 
which is a rational function of $\omega_{1},\cdots,\omega_{r}$ 
and that the right-hand side of (\ref{eq:multiple Bernoulli F.E}) converges absolutely, 
when $z\in D$ and $\arg(\omega_{j})\neq \arg(\omega_{k})\,\,(j\neq k)$. 
Hence, by analytic continuation, it is enough to show the assertion when we assume that $\omega_{1},\cdots,\omega_{r}$ satisfy the same conditions of Theorem \ref{thm:bilateral relation}.

If $z\in D$, by (\ref{eq:Barnes F.E}), for all $m\in \mathbf{N}$, we have 
\begin{equation}
\zeta_{r}(1-m,z\mid {\boldsymbol{\omega}})
=\frac{(m-1)!}{(2{\pi}i)^{m}}\sum _{k=1}^{r}\omega_{k}^{m-1}\sum _{n\in \mathbf{Z}\backslash \{0\}}
\frac{e^{2{\pi}inz_{k}}}{n^m}\!\!\prod _{j=1,\,j\neq k}^{r}\!\!(1-e^{2{\pi}in\omega_{jk}})^{-1}.\nonumber
\end{equation}
Using (\ref{eq:BarnesSV}), 
we have 
\begin{equation}
B_{r,r+m-1}(z\mid {\boldsymbol{\omega}})=(-1)^{r}\frac{(m+r-1)!}{(2{\pi}i)^{m}}\sum _{k=1}^{r}\omega_{k}^{m-1}
\!\!\!\!\sum _{n\in \mathbf{Z}\backslash \{0\}}
\!\!\!\!\frac{e^{2{\pi}inz_{k}}}{n^m}\!\!\prod _{j=1,\,j\neq k}^{r}\!\!(1-e^{2{\pi}in\omega_{jk}})^{-1}.\nonumber
\end{equation}
Replacing $m$ with $m-r+1$, for all $m \in \mathbf{Z}_{\geq r}$, we obtain (\ref{eq:multiple Bernoulli F.E}). 
On the other hand, by (\ref{eq:Barnes F.E}), if $z\in D$, we see that 
\begin{align}
\mathop{\rm Res}_{s=m}\zeta_{r}(s,\,z\mid {\boldsymbol{\omega}})ds
&=\lim_{s\to{m}}(s-m)\zeta_{r}(s,\,z\mid {\boldsymbol{\omega}})\nonumber \\
&=\frac{(-1)^{m}(2\pi{i})^{m-1}}{(m-1)!}
\sum _{k=1}^{r}\omega_{k}^{-m}
\!\!\!\!\sum _{n\in \mathbf{Z}\backslash \{0\}}
\!\!\!\!n^{m-1}e^{2{\pi}inz_{k}}\!\!\prod _{j=1,\,j\neq k}^{r}\!\!(1-e^{2{\pi}in\omega_{jk}})^{-1}\nonumber
\end{align}
for all \,$m=1,\cdots,r$. 
Hence it follows from (\ref{eq:BarnesSV2}) that 
\begin{align}
B_{r,r-m}(z\mid {\boldsymbol{\omega}})
&=(-1)^{r}(2\pi{i})^{m-1}(r-m)!
\sum _{k=1}^{r}\omega_{k}^{-m}
\!\!\!\!\sum _{n\in \mathbf{Z}\backslash \{0\}}
\!\!\!\!n^{m-1}e^{2{\pi}inz_{k}}\!\!\prod _{j=1,\,j\neq k}^{r}\!\!(1-e^{2{\pi}in\omega_{jk}})^{-1}.\nonumber
\end{align}
Replacing $m$ with $r-m$, we obtain the result.\\
{\rm(2) :}
Since $\zeta_{r}(s,\,z\mid \boldsymbol{\omega})$ is holomorphic when $\Re(s)>r$, we have
\begin{equation}
\lim_{s\to{m}}\frac{\zeta_{r}(s+r,\,z\mid {\boldsymbol{\omega}})}
{\Gamma(1-r-s)}=0\,\,(m\in \mathbf{N}).\nonumber
\end{equation}
Since $\omega_{1},\cdots,\omega_{r}$ satisfy the same conditions of Theorem \ref{thm:bilateral relation}, 
if $z\in D$, it follows that 
\begin{align}
\lim_{s\to{m}}\frac{\zeta_{r}(s+r,\,z\mid {\boldsymbol{\omega}})}
{\Gamma(1-r-s)}
&=(2\pi{i})^{r+m-1}\sum _{k=1}^{r}\omega_{k}^{-(r+m)}
\!\!\!\!\sum _{n\in \mathbf{Z}\backslash \{0\}}
\!\!\!\!n^{r+m-1}e^{2{\pi}inz_{k}}
\!\!\prod _{j=1,\,j\neq k}^{r}\!\!(1-e^{2{\pi}in\omega_{jk}})^{-1}.\nonumber
\end{align}
Hence (\ref{eq:multiple Bernoulli F.E2}) follows immediately.
\end{proof}
\begin{chu}
Suppose $r=1$. If $z=a\omega_{1}$($0<a<1$), for all $m\in \mathbf{N}$, one has 
\begin{equation}
B_{1,m}(z\mid \omega_{1})=-\frac{m!}{(2{\pi}i)^{m}}\omega_{1}^{m-1}\sum _{n\in \mathbf{Z}\backslash \{0\}}
\frac{e^{2{\pi}inz_{1}}}{n^m}.\nonumber
\end{equation}
Here, when $m=1$, the summention on $n$ is meant 
\begin{equation}
\lim_{N\to \infty}\sum _{n=-N,\,n\neq 0}^{N}\frac{e^{2{\pi}inz_{1}}}{n}.\nonumber
\end{equation}
\end{chu}
\subsection{Multiple Iseki's formulas}
We prove the following multiple-analogue of Iseki's formula.
\begin{thm}
\label{thm:multiple Iseki}
We assume that $r\geq 2$, $z\in D$ and $\omega_{1},\cdots,\omega_{r}$ satisfy the condition {\bf[{ORC}]}.
Put
\begin{equation}
f(s,z\mid {\boldsymbol{\omega}}):=\zeta_{r}(s,z\mid {\boldsymbol{\omega}})
+(-1)^{r-1}\zeta_{r}(s,\vert {\boldsymbol{\omega}}\vert^{+}+e^{-{\pi}i}z\mid {\boldsymbol{\omega}}).
\end{equation}
Then, for all $N\in \mathbf{N}_{0}$, we have
\begin{align}
\label{eq:Narukawa basic1}
\frac{\partial f}{\partial s}(-2N,z\mid {\boldsymbol{\omega}})
&=(-1)^{r+1}\pi{i}\frac{(2N)!}{(2N+r)!}B_{r,r+2N}(z\mid {\boldsymbol{\omega}}) \\
{} & \quad
+\frac{(-1)^{N}(2N)!}{(2\pi)^{2N}}
\sum _{k=1}^{r}\omega_{k}^{2N}\sum _{n=1}^{\infty }
\frac{e^{2{\pi}inz_{k}}}{n^{2N+1}}\!\!\prod _{j=1,\,j\neq k}^{r}\!\!(1-e^{2{\pi}in\omega_{jk}})^{-1}\nonumber \\
&=(-1)^{r}\pi{i}\frac{(2N)!}{(2N+r)!}B_{r,r+2N}(z\mid {\boldsymbol{\omega}})\nonumber \\
{} & \quad
+\frac{(-1)^{N}(2N)!}{(2\pi)^{2N}}
\sum _{k=1}^{r}\omega_{k}^{2N}\sum _{n=1}^{\infty }
\frac{e^{-2{\pi}inz_{k}}}{n^{2N+1}}\!\!\prod _{j=1,\,j\neq k}^{r}\!\!(1-e^{-2{\pi}in\omega_{jk}})^{-1}.\nonumber
\end{align}
In particular, for all $z\in \mathbf{C}$ , we have
\begin{align}
\label{eq:Narukawa basic2}
\exp\left(-\frac{\partial f}{\partial s}(0,z\mid {\boldsymbol{\omega}})\right)
&=\exp{\left(\frac{(-1)^{r}\pi{i}}{r!}B_{r,r}(z\mid {\boldsymbol{\omega}})\right)}
\prod _{k=1}^{r}(\widetilde{x_{k};\,\widehat{{\boldsymbol{q}}}_{\,k}})_{r-1,\infty } \\
&=\exp{\left(\frac{(-1)^{r-1}\pi{i}}{r!}B_{r,r}(z\mid {\boldsymbol{\omega}})\right)}
\prod _{k=1}^{r}(\widetilde{x_{k}^{-1};\,\widehat{{\boldsymbol{q}}}^{\,-1}_{\,k}})_{r-1,\infty }.\nonumber
\end{align}
\end{thm}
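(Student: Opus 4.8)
The plan is to deduce the whole statement from Theorem~\ref{thm:bilateral relation} and Corollary~\ref{cor:Narukawa basic0} by comparing $f$ with the two auxiliary functions $f_{+}$ and $f_{-}$. First I would record the two relations linking $f$ to $f_{\pm}$. Applying the multiplication formula \eqref{eq:Barnes multiplication} with $\alpha=e^{-\pi i}$ to the first summand of $f_{+}$ gives $\zeta_{r}(s,e^{-\pi i}z\mid e^{-\pi i}{\boldsymbol{\omega}})=e^{\pi i s}\zeta_{r}(s,z\mid {\boldsymbol{\omega}})$, whence
\[
f_{+}(s,z\mid {\boldsymbol{\omega}})-f(s,z\mid {\boldsymbol{\omega}})=(e^{\pi i s}-1)\,\zeta_{r}(s,z\mid {\boldsymbol{\omega}}).
\]
Likewise, the identity $e^{-\pi i s}\zeta_{r}(s,z+e^{-\pi i}\vert{\boldsymbol{\omega}}\vert^{+}\mid e^{-\pi i}{\boldsymbol{\omega}})=\zeta_{r}(s,\vert{\boldsymbol{\omega}}\vert^{+}+e^{-\pi i}z\mid{\boldsymbol{\omega}})$ already established inside the proof of Theorem~\ref{thm:Barnes zeta F.E} turns the second summand of $f_{-}$ into $e^{\pi i s}\zeta_{r}(s,\vert{\boldsymbol{\omega}}\vert^{+}+e^{-\pi i}z\mid{\boldsymbol{\omega}})$, so that
\[
f_{-}(s,z\mid {\boldsymbol{\omega}})-f(s,z\mid {\boldsymbol{\omega}})=(-1)^{r-1}(e^{\pi i s}-1)\,\zeta_{r}(s,\vert{\boldsymbol{\omega}}\vert^{+}+e^{-\pi i}z\mid{\boldsymbol{\omega}}).
\]

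The crucial feature is that the prefactor $e^{\pi i s}-1$ vanishes at every $s=-2N$ ($N\in\mathbf{N}_{0}$). Hence, differentiating the two displayed relations in $s$ and evaluating at $s=-2N$, only the derivative of the prefactor survives, giving
\[
\frac{\partial f}{\partial s}(-2N,z\mid{\boldsymbol{\omega}})=\frac{\partial f_{+}}{\partial s}(-2N,z\mid{\boldsymbol{\omega}})-\pi i\,\zeta_{r}(-2N,z\mid{\boldsymbol{\omega}}),
\]
and symmetrically with $f_{-}$ and $\zeta_{r}(-2N,\vert{\boldsymbol{\omega}}\vert^{+}+e^{-\pi i}z\mid{\boldsymbol{\omega}})$. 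Next I would evaluate these Barnes special values: \eqref{eq:BarnesSV} with $m=2N+1$ gives $\zeta_{r}(-2N,z\mid{\boldsymbol{\omega}})=(-1)^{r}\tfrac{(2N)!}{(2N+r)!}B_{r,r+2N}(z\mid{\boldsymbol{\omega}})$, while for the $f_{-}$ branch the reflection formula \eqref{eq:multiple Brnoulli2} (using $\vert{\boldsymbol{\omega}}\vert^{+}+e^{-\pi i}z=\vert{\boldsymbol{\omega}}\vert^{+}-z$) absorbs the extra sign. Substituting the Lambert-type expressions for $\partial f_{\pm}/\partial s$ from \eqref{eq:f_{+} D1} and simplifying $(2\pi i)^{-2N}=(-1)^{N}(2\pi)^{-2N}$ then yields both equalities in \eqref{eq:Narukawa basic1}.

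For the specialization \eqref{eq:Narukawa basic2} I would set $N=0$ in the relation just derived, so that $-\tfrac{\partial f}{\partial s}(0,z\mid{\boldsymbol{\omega}})=-\tfrac{\partial f_{+}}{\partial s}(0,z\mid{\boldsymbol{\omega}})+\tfrac{(-1)^{r}\pi i}{r!}B_{r,r}(z\mid{\boldsymbol{\omega}})$ (and analogously with $f_{-}$, producing the sign $(-1)^{r-1}$). Exponentiating and inserting the product formula \eqref{eq:f_{+} D2} gives the two claimed factorizations on $D$; finally, since both sides are meromorphic in $z$ and agree on $D$, analytic continuation extends \eqref{eq:Narukawa basic2} to all $z\in\mathbf{C}$, exactly as in Corollary~\ref{cor:Narukawa basic0}.

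The main obstacle is the branch bookkeeping: to apply \eqref{eq:Barnes multiplication} with $\alpha=e^{-\pi i}$ and to justify the rotation $e^{\pi i}(z+e^{-\pi i}\vert{\boldsymbol{\omega}}\vert^{+})=\vert{\boldsymbol{\omega}}\vert^{+}+e^{-\pi i}z$, one must check that the argument constraints in Lemma~\ref{prop:multiplication} hold and that every Barnes zeta function appearing satisfies the one-side condition. These are precisely the verifications already carried out for $z\in D$ in the proofs of Theorem~\ref{thm:bilateral relation} and Theorem~\ref{thm:Barnes zeta F.E}, so I would invoke them rather than repeat the computation.
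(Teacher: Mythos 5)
Your proposal is correct and follows essentially the same route as the paper: the paper likewise rewrites $f$ as $f_{\pm}$ plus a term carrying the prefactor $e^{\mp\pi i s}-1$ that vanishes at $s=-2N$ (so only the prefactor's derivative survives), evaluates the resulting Barnes special values via \eqref{eq:BarnesSV} together with the Bernoulli identities of Lemma~\ref{prop:multiple Brnoulli1}, inserts \eqref{eq:f_{+} D1} and \eqref{eq:f_{+} D2}, and concludes \eqref{eq:Narukawa basic2} for all $z$ by analytic continuation. The only cosmetic difference is that you keep the unrotated $\zeta_{r}(s,z\mid{\boldsymbol{\omega}})$ in the $f_{+}$ comparison (avoiding one application of the homogeneity relation \eqref{eq:multiple Brnoulli1}), which is a harmless simplification.
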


\begin{proof}
If $z\in D$, then
\begin{equation}
f(s,z\mid {\boldsymbol{\omega}})
=e^{-{\pi}is}\zeta_{r}(s,e^{-{\pi}i}z\mid e^{-{\pi}i}{\boldsymbol{\omega}})
+(-1)^{r-1}\zeta_{r}(s,\vert {\boldsymbol{\omega}}\vert^{+}+e^{-{\pi}i}z \mid {\boldsymbol{\omega}}).\nonumber 
\end{equation}
Thus, by (\ref{eq:BarnesSV}), Lemma \ref{prop:multiple Brnoulli1} and (\ref{eq:f_{+} D1}),
\begin{align}
\frac{\partial f}{\partial s}(-2N,z\mid {\boldsymbol{\omega}})
&=-\pi{i}\zeta_{r}(-2N,e^{-{\pi}i}z\mid e^{-{\pi}i}{\boldsymbol{\omega}})
+\frac{\partial f_{+}}{\partial s}(-2N,z\mid {\boldsymbol{\omega}})\nonumber \\
&=(-1)^{r+1}\pi{i}\frac{(2N)!}{(2N+r)!}B_{r,r+2N}(z\mid {\boldsymbol{\omega}})\nonumber \\
{} & \quad
+\frac{(-1)^{N}(2N)!}{(2\pi)^{2N}}
\sum _{k=1}^{r}\omega_{k}^{2N}\sum _{n=1}^{\infty }
\frac{e^{2{\pi}inz_{k}}}{n^{2N+1}}\!\!\prod _{j=1,\,j\neq k}^{r}\!\!(1-e^{2{\pi}in\omega_{jk}})^{-1}.\nonumber
\end{align}
On the other hand, since $z\in D$, we see that $e^{-{\pi}i}(\vert {\boldsymbol{\omega}}\vert^{+}+e^{-{\pi}i}z)=z+e^{-{\pi}i}\vert {\boldsymbol{\omega}}\vert^{+}$.
Therefore, 
\begin{equation}
f(s,z\mid {\boldsymbol{\omega}})
=\zeta_{r}(s,z\mid {\boldsymbol{\omega}})
+(-1)^{r-1}e^{-{\pi}is}\zeta_{r}(s,z+e^{-{\pi}i}\vert {\boldsymbol{\omega}}\vert^{+}\mid e^{-{\pi}i}{\boldsymbol{\omega}}).\nonumber
\end{equation}
Hence, by (\ref{eq:BarnesSV}), Lemma \ref{prop:multiple Brnoulli1} and (\ref{eq:f_{+} D1}),
\begin{align}
\frac{\partial f}{\partial s}(-2N,z\mid {\boldsymbol{\omega}})
&=-\pi{i}(-1)^{r-1}\zeta_{r}(-2N,z+e^{-{\pi}i}\vert {\boldsymbol{\omega}}\vert^{+}\mid e^{-{\pi}i}{\boldsymbol{\omega}})
+\frac{\partial f_{-}}{\partial s}(-2N,z\mid {\boldsymbol{\omega}})\nonumber \\
&=(-1)^{r}\pi{i}\frac{(2N)!}{(2N+r)!}B_{r,r+2N}(z\mid {\boldsymbol{\omega}})\nonumber \\
{} & \quad
+\frac{(-1)^{N}(2N)!}{(2\pi)^{2N}}
\sum _{k=1}^{r}\omega_{k}^{2N}\sum _{n=1}^{\infty }
\frac{e^{-2{\pi}inz_{k}}}{n^{2N+1}}\!\!\prod _{j=1,\,j\neq k}^{r}\!\!(1-e^{-2{\pi}in\omega_{jk}})^{-1}.\nonumber
\end{align}
Consequently we obtain (\ref{eq:Narukawa basic1}). 
Similarly,
\begin{align}
\exp\left(-\frac{\partial f}{\partial s}(0,z\mid {\boldsymbol{\omega}})\right)
&=\exp{\left(\frac{(-1)^{r}\pi{i}}{r!}B_{r,r}(z\mid {\boldsymbol{\omega}})\right)}
\exp{\left(-\frac{\partial f_{+}}{\partial s}(0,z\mid {\boldsymbol{\omega}})\right)}\nonumber \\
&=\exp{\left(\frac{(-1)^{r-1}\pi{i}}{r!}B_{r,r}(z\mid {\boldsymbol{\omega}})\right)}
\exp{\left(-\frac{\partial f_{-}}{\partial s}(0,z\mid {\boldsymbol{\omega}})\right)}.\nonumber
\end{align}
Hence the result (\ref{eq:Narukawa basic2}) follows from (\ref{eq:f_{+} D2}) by analytic continuation.
\end{proof}
\begin{chu}
{\rm(1)} 
For $r=1$, we have
\begin{align}
\frac{\partial f}{\partial s}(-2N,z\mid \omega_{1})
&=\frac{\pi{i}}{2N+1}B_{1,2N+1}(z\mid \omega_{1})
+\frac{(-1)^{N}(2N)!}{(2\pi)^{2N}}
\omega_{1}^{2N}\sum _{n=1}^{\infty }
\frac{e^{2{\pi}ina}}{n^{2N+1}}\\
&=-\frac{\pi{i}}{2N+1}B_{1,2N+1}(z\mid \omega_{1})
+\frac{(-1)^{N}(2N)!}{(2\pi)^{2N}}
\omega_{1}^{2N}\sum _{n=1}^{\infty }
\frac{e^{-2{\pi}ina}}{n^{2N+1}}, \nonumber 
\end{align}
where $z=a\omega_{1}$($0<a<1$). 
In particular, for all $z\in \mathbf{C}$, 
\begin{align}
\exp\left(-\frac{\partial f}{\partial s}(0,z\mid {\boldsymbol{\omega}})\right)
=e^{-\pi{i}\left(\frac{z}{\omega_{1}}-\frac{1}{2}\right)}
(1-e^{2{\pi}ia})
=e^{\pi{i}\left(\frac{z}{\omega_{1}}-\frac{1}{2}\right)}
(1-e^{-2{\pi}ia}).
\end{align}
{\rm(2)} 
We remark that Narukawa has proved (\ref{eq:Narukawa basic2}) in \cite{Na} 
and the left hand side of (\ref{eq:Narukawa basic2}) is a multiple sine function. \\
{\rm(3)} 
If $r=2$, (\ref{eq:Narukawa basic2}) gives Iseki's formula \cite{I}.\\
{\rm(4)} 
By Remark \ref{chu:ext 3}, if $z\in D$, the second equality of (\ref{eq:Narukawa basic1}) and that of (\ref{eq:Narukawa basic2}) are 
true for replacing the condition {\bf[{ORC}]} by $\arg(\omega_{j})\neq \arg(\omega_{k})$ $(j\neq k)$.
\end{chu}

\section*{Acknowledgment}
The author would like to express his gratitude to Professor Masato Wakayama for his mathematical supports, careful reading of this paper and warm encouragements. The author also thanks Professor Katsuhiko Kikuchi for his mathematical advices and warm encouragements, 
and Professors Atsushi Narukawa, Kohji Matsumoto for giving me helpful comments.

\bibliographystyle{amsplain}

\noindent Graduate School of Mathematics, Kyushu University\\
744, Motooka, Nishi-ku, Fukuoka, 819-0395, JAPAN.\\
E-mail: g-shibukawa@math.kyushu-u.ac.jp

\end{document}